\newcommand*\bigcdot{{\mathpalette\bigcdot@{.5}}}
\newcommand*\comp{{\mathpalette\bigcdot@{.65}}}
\newcommand*\bigcdot@[2]{\mathbin{\vcenter{\hbox{\scalebox{#2}{$\m@th#1\bullet$}}}}}
\newcommand{\mathds}{}
\DeclareSymbolFont{fouriersymbols}{FMS}{futm}{m}{n}
\DeclareSymbolFont{fourierlargesymbols}{FMX}{futm}{m}{n}
\DeclareMathDelimiter{\vvert}{\mathord}{fouriersymbols}{152}{fourierlargesymbols}{147}
\DeclarePairedDelimiter{\nn}{\vvert}{\vvert}
\colorlet{darkred}{red!90!black}
\theoremstyle{plain}
\newtheorem{theorem}{Theorem}[section]
\newtheorem*{theorem*}{Theorem}
\newtheorem{corollary}[theorem]{Corollary}
\newtheorem{lemma}[theorem]{Lemma}
\newtheorem{proposition}[theorem]{Proposition}
\theoremstyle{definition}
\newtheorem{definition}[theorem]{Definition}
\newtheorem{assumption}[theorem]{Assumption}
\newtheorem{example}[theorem]{Example}
\newtheorem{notation}[theorem]{Notation}
\newtheorem{remark}[theorem]{Remark}
\numberwithin{equation}{section}
\DeclarePairedDelimiter{\bk}{\llbracket}{\rrbracket}
\DeclareMathOperator\E{{\mathbb E}}
\DeclareMathOperator{\law}{Law}
\newcommand{\assign}{\vcentcolon=}
\newcommand{\tmmathbf}[1]{\ensuremath{\boldsymbol{#1}}}
\newcommand{\tmop}[1]{\ensuremath{\operatorname{#1}}}
\newcommand{\tand}{\quad\textrm{and} \quad}
\newcommand{\cff}{\mathcal F}
\newcommand{\LL}{\mathcal L}
\newcommand{\cll}{\mathcal L}
\def\cgg{{\mathcal G}}
\newcommand{\cpp}{{\mathcal P}}
\newcommand{\les}{\lesssim}
\newcommand{\X}{{\ensuremath{\bm{X}}}}
\newcommand{\XX}{\ensuremath{\mathbb{X}}}
\newcommand{\R}{\ensuremath{{\mathbf R}}}
\renewcommand{\P}{\ensuremath{{\mathbb P}}}
\newcommand{\DL}[3]{\ensuremath{\mathbf{D}_X^{#1}L_{#2,#3}}}
\newcommand{\Vone}{{\bar{V}}}
\newcommand{\abx}{{\mathcal X}}
\newcommand{\aby}{{\mathcal Y}}
\newcommand{\bxx}{\mathbb{X}}
\newcommand{\bmxx}{\bm{X}}
\newcommand{\C}{{\mathcal C}}
\newcommand{\CC}{{\mathscr{C}}}
\DeclarePairedDelimiter{\abs}{\lvert}{\rvert}
\newcommand{\EE}{{{\mathbb E}_{\bigcdot}}}
\newcommand{\W}{{\bm{\Omega}}}
\renewcommand{\tilde}{\widetilde}
\newcommand{\Bor}{\ensuremath{\mathscr B}}
\begin{document}
\title[McKean--Vlasov equations with rough common noise]{McKean--Vlasov equations with rough common noise}
\author{Peter K.~Friz}
\address{TU Berlin and WIAS Berlin}
\email{friz@math.tu-berlin.de}

\author{Antoine Hocquet}
\address{TU Berlin}
\email{hocquet@math.tu-berlin.de}

\author{Khoa Lê}
\address{School of Mathematics, University of Leeds, U.K.}
\email{k.le@leeds.ac.uk}

\subjclass[2020]{Primary 60H10, 60L20}

\keywords{McKean--Vlasov equation; Common noise.}

\begin{abstract} 
		We show well-posedness for McKean--Vlasov equations with rough common noise and progressively measurable coefficients.
	Our results are valid under natural regularity assumptions on the coefficients, in agreement with the respective requirements of It\^o and rough path theory. 
	To achieve these goals, we work in the framework of rough stochastic differential equations recently developed by the authors of this article. 
\end{abstract}
\maketitle

\tableofcontents
\section{Introduction} 

\label{sec:introduction}
Let $W,V,\Vone$ denote finite-dimensional real vector spaces
and consider the following McKean--Vlasov equation
\begin{equation}\label{eqn.MV}
	\left\{
	\begin{aligned}
		&dY_t(\omega)=b_t(\omega,Y_t(\omega),\mu_t)dt+\sigma_t(\omega,Y_t(\omega),\mu_t)dB_t(\omega)+(f_t,f'_t)(\omega,Y_t(\omega),\mu_t)d\X_t,
		\\& \mu_t=\law(Y_t),
	\end{aligned}
	\right.
\end{equation}
with initial condition \(Y_0(\omega)=\xi(\omega)\).
Here, $B$ is a $\{\cff_t\}$-standard Brownian motion in $\Vone$ defined on a suitable 
probability space $\W=(\Omega,\cgg,\{\cff_t\},\P)$, $\X=(X,\XX)$ is a (deterministic) rough path in $\CC^\alpha(V)$, $\alpha\in(\nicefrac13,\nicefrac12]$ and $\xi$ is a $\cff_0$-random variable in $W$. We  work with progressive coefficients
\begin{align*}
    (b,\sigma,f,f'):\Omega\times\R_+\times W\times\cpp_q(W)\to W\times \LL(\Vone,W)\times \LL(V,W)\times \LL(V,\LL(V,W)),
\end{align*}
 where $q\in[0,\infty)$ and $\cpp_q(W)$  is the space of probability measures $\mu$ on $W$ such that $\int_W\abs{x}^q \mu(dx)$ is finite.

The purpose of the current article is to show the following result, whose precise formulation will be given later.
\begin{theorem*}
	Assuming $\law(\xi)\in\cpp_q(W)$ and natural regularity conditions on the coefficients (see \cref{assume.regularity}), equation \eqref{eqn.MV} has a unique strong solution (see \cref{thm.MV.wellposed}) which depends continuously on the data $(\xi,b,\sigma,f,f',\X)$ (see \cref{thm.MKV.Stability}). 
\end{theorem*}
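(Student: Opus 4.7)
The plan is to follow the classical Sznitman fixed-point scheme, now implemented within the rough-SDE framework developed earlier in the paper. Step 1 is to decouple the mean-field interaction: for a given measure flow $\mu=(\mu_t)_{t\in[0,T]}$ in a complete metric subspace $\cmm_T\subset C([0,T];\cpp_q(W))$ (endowed with a uniform $q$-Wasserstein distance), I freeze $\mu$ in the coefficients and consider the rough SDE
\[
dY_t=b_t(Y_t,\mu_t)\,dt+\sigma_t(Y_t,\mu_t)\,dB_t+(f_t(\cdot,\mu_t),f'_t(\cdot,\mu_t))(Y_t)\,d\X_t, \quad Y_0=\xi.
\]
This is a standard (non-mean-field) rough SDE to which the well-posedness theory from earlier in the paper applies; it yields a unique strong solution $Y^\mu$ with moment estimates depending monotonically on $\mu$ through $\cpp_q$-norms. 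This defines the Picard map $\Phi:\mu\mapsto(\law(Y^\mu_t))_{t\in[0,T]}$.

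Step 2 is the fixed-point argument. Given two flows $\mu^1,\mu^2\in\cmm_T$, the quantitative stability theorem for rough SDEs with measure-valued parameter should yield an estimate of the form
\[
\E\bigl[\sup_{t\le T}|Y^{\mu^1}_t-Y^{\mu^2}_t|^q\bigr]\le C_T\int_0^T\wass_q(\mu^1_s,\mu^2_s)^q\,ds,
\]
and passing to marginals gives $\wass_q(\Phi(\mu^1)_t,\Phi(\mu^2)_t)^q\le C_T\int_0^t\wass_q(\mu^1_s,\mu^2_s)^q\,ds$. Iterating this inequality (equivalently, using an exponentially weighted norm $\sup_t e^{-\lambda t}\wass_q(\mu^1_t,\mu^2_t)^q$ with $\lambda$ large) produces a strict contraction on a short interval, and concatenation extends the fixed point to any $[0,T]$. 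This solves \eqref{eqn.MV} strongly and uniquely.

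For continuous dependence on the data $(\xi,b,\sigma,f,f',\X)$, I apply the rough-SDE stability theorem to two solutions $Y^i$ corresponding to distinct data sets and their respective marginal flows $\mu^i=\law(Y^i_\bullet)$: this bounds $\E[\sup|Y^1-Y^2|^q]$ by a sum of an explicit ``data-distance'' term (combining rough-path, Itô, and initial-data norms) and $\int_0^T\wass_q(\mu^1_s,\mu^2_s)^q\,ds$; since the latter is itself dominated by $\E[\sup|Y^1-Y^2|^q]$, a Gr\"onwall argument closes the loop and gives \cref{thm.MKV.Stability}. The main obstacle is the stability estimate used in Step 2: one has to check that $(f_t(\cdot,\mu_t),f'_t(\cdot,\mu_t))$ remains a genuine controlled rough path when $\mu_t$ varies in time — the $\mu$-variation should contribute only lower-order, finite-variation terms compatible with the controlled-path structure — and one has to establish the corresponding stochastic-sewing bound for
\[
\int\bigl[(f(Y^1,\mu^1),f'(Y^1,\mu^1))-(f(Y^2,\mu^2),f'(Y^2,\mu^2))\bigr]\,d\X
\]
at the sharp order under the joint Lipschitz/Gubinelli-type regularity of $(f,f')$ in $(Y,\mu)$ encoded in \cref{assume.regularity}. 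Once this is in place, the Sznitman scheme above can be carried through without further obstructions.
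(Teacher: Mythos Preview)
Your scheme has a genuine structural gap in the choice of fixed-point space. You propose to iterate on measure flows $\mu\in C([0,T];\cpp_q(W))$ and freeze $\mu$ in the coefficients, taking $(f_t(\cdot,\mu_t),f'_t(\cdot,\mu_t))$ as the controlled vector field. But for the rough integral against $\X$ to be well-defined, $(f_t(\cdot,\mu_t),f'_t(\cdot,\mu_t))$ must be a stochastic controlled vector field in $\mathbf D^{\beta,\beta'}_XL_{m,\infty}\C^\gamma_b$, and in particular the remainder $R^{f(\cdot,\mu)}_{s,t}=f_t(\cdot,\mu_t)-f_s(\cdot,\mu_s)-f'_s(\cdot,\mu_s)\delta X_{s,t}$ must have order $\beta+\beta'>1-\alpha$. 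Your hope that ``the $\mu$-variation should contribute only lower-order, finite-variation terms'' is precisely what fails: the relevant flow is $\mu_t=\law(Y_t)$ with $Y$ driven by $\X$, so $t\mapsto\mu_t$ is only $\alpha$-H\"older in Wasserstein, the \emph{same} order as $X$. Hence $f_s(\cdot,\mu_t)-f_s(\cdot,\mu_s)$ is genuinely of order $\alpha$, and without a Gubinelli derivative for $\mu$ you cannot produce the required $2\alpha$-remainder. The pair $(f(\cdot,\mu),f'(\cdot,\mu))$ is simply not controlled, and the frozen RSDE is ill-posed.

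The paper resolves this by running the fixed point not on measure flows but on stochastic controlled rough paths $(\eta,\eta')\in\mathbf D^{\beta,\beta'}_XL_m$: the Gubinelli derivative of the frozen vector field then reads $(f^\eta)'_t=D_2\hat f_t(\cdot,\slashed\eta_t)[\slashed\eta'_t]+\hat f'_t(\cdot,\slashed\eta_t)$, with the crucial first term coming from $\eta'$. \Cref{prop.newcvec} shows this yields a bona fide controlled vector field, and the contraction is obtained in the controlled-path metric \eqref{def.scrp.metricd} via the gap $\alpha>\beta>\beta'$ (no integral Gr\"onwall). The stability proof likewise closes the loop in this metric, using \cref{cor.expo} to propagate the controlled-path distance through the composition. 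Your Wasserstein-flow space carries strictly less information than is needed to even formulate Step~1.
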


For a standard treatment of McKean--Vlasov dynamics (without common rough noise, $f \equiv 0$), classical references include \cite{MR1108185,MR1431299}. Common independent Brownian noise
 (think: $d\X \rightsquigarrow dW$) appears in \cite{CF16}, with $\sigma \equiv 0, f = f(y)$, and also \cite{MR3753660,coghi2019stochastic}, with general $\sigma = \sigma(y,\mu), f = f(y,\mu)$. Such stochastic common noise can always be reintroduced upon subsequent ``Brownian randomization'', i.e. $\X = (W, \int W \otimes dW)$ in \eqref{eqn.MV} (see Section \ref{sec:FR2SCN}).

Several authors went on to drop the assumption of Brownian noise and consider random RDEs, with given distribution of $\X = \X (\omega)$ on suitable rough path spaces, 
directly injected into the mean field dynamics. This started with \cite{MR3299600, MR3420480} in case of mean-field interaction in the drift vector field, followed by the 
remarkable extension \cite{MR4073682,bailleul2020propagation} to full mean-field dependence. 

The current article differs from the previous works on at least two accounts. One\footnote{ ... essentially the remark that stochastic integrals can only be written as randomized rough integrals under rigid structural assumption on the integrand that rule out progressive path dependence ... } 
is our ability to allow for progressively measurable dependence in our coefficients ($b, \sigma$). Any perspective on using rough paths in the context of controlled 
McKean--Vlasov dynamics \cite{carmona2015forward,lacker2017limit} and then mean field games with common noise (\cite{MR3753660} and references therein) will require such flexibility.  We also note that our approach leads to minimal regularity on the coefficient fields, notably Lipschitz regularity in $b, \sigma$, which can not be obtained with 
(random) rough path techniques as employed in \cite{MR3299600, MR3420480, MR4073682}.
We also mention \cite{coghi2019rough} which formally accommodates $\eqref{eqn.MV}$.
A close look reveals again somewhat restrictive regularity assumption (essentially, dimension dependent Sobolev regularity to embed in $C^3$) and linear dependence of $f$ with respect to $\mu$.

The other difference is that the measure arguments of the coefficients of \eqref{eqn.MV} have domain $\cpp_q(W)$ for some  $q\in[0,\infty)$ instead of $\cpp_2(W)$ as usually considered in the literature (e.g.  \cite{MR3752669} or \cite{MR4073682}).  
To treat the cases $\cpp_q$ (at least when $q\ge1)$, one could potentially work with coefficients whose regularity is described by means of \( L_q \)-Fr\'echet differentiability of the corresponding Lions lifts.
However, higher order differentiability in this sense is very restrictive (also in case $q=2$)  and rules out many natural functions on $\cpp_q$.
For the sake of argument, consider the degenerate McKean--Vlasov RSDE 
 \begin{equation}
 \label{ansatz_example}
\left \{\begin{aligned}
&dY_t=f(\mu_t)d\X_t,
\\
&\mu_t=\law(Y_t),\qquad 
\quad Y_0=\xi \in  L_1(\Omega)\,,
\end{aligned} 
\right .
 \end{equation}
 where \( W= \R\) and \( f(\mu)=\int_{W} h (y)\mu(dy)\), for some smooth compactly supported $h$. Introduce the Lions lift  \( \hat f(\eta) = f (\law(\eta)) \), the vector-valued path \( t\mapsto \eta_t=Y_t(\cdot) \) and regard
  \eqref{ansatz_example} as RDE
 \[
  d\eta_t=\hat f(\eta_t)d\X_t,\quad \quad \eta_0=\xi(\cdot) \in E:=L_1(\Omega)\,,
 \]
  on the infinite-dimensional state space \( E\), potentially addressable via RDE theory in Banach spaces.
  As is well-known, this would require \( \hat f \colon E\to \R\) to be \( \gamma \)-Lipschitz for some \( \gamma >\frac1\alpha\ge2\), hence in particular twice continuously differentiable
  which however is not a reasonable assumption, cf. the example in \cite[Remark 5.80]{MR3752669}.  

To overcome this problem, we characterize higher regularity using Fr\'echet differentiability not over the whole domain but only along a suitable sub-Banach spaces of random variables;
see \cref{def.Lionslift} (and also \cref{ex.lip} that relates to the above example). In this way, higher derivatives can be defined and utilized in our analysis. 
To the best of our knowledge this perspective is new in the (vast) literature on differential calculus on Wasserstein spaces. 
 Besides the obvious advantage that $q$ can be arbitrary in $[0,\infty)$,
we find that Fr\'echet differentiability (along subspaces) yields more familiar higher order Taylor expansions, whereas Lions derivatives of orders more than $2$ can be algebraically involved because each Lions derivative carries an additional argument (but see \cite{delarue2021probabilistic, delarue2022probabilisticroughpathsii} how to manage the resulting complexity).  That said, our results do accommodate the general class of coefficients considered in \cite{MR4073682} as is verified in \cref{ex.reg2}.
Having at our disposal a suitable notion of regularity of arbitrary order with respect to the measure argument, the rough integration against $\X$ in \eqref{eqn.MV} can be understood as a rough stochastic integral of a stochastic controlled rough path, as defined in the earlier work \cite{FHL21}. This point of view is independent from and is complementary to the approach of probabilistic rough paths introduced in \cite{MR4073682}. 

Our approach is based upon the analysis of rough stochastic differential equations (RSDEs) developed in \cite{FHL21}. This method takes advantage of the stability of rough stochastic differential equations while treating the measure dependent coefficients as stochastic controlled vector fields.
An alternative approach which originates from our analysis is to replace the vector fields $b,\sigma,f,f'$ by their corresponding Lions lifts and then treat \eqref{eqn.MV} directly as a rough stochastic differential equation with coefficients having their domains on $\Omega\times \R_+\times W\times L_q(\Omega,W)$, see \cref{rmk.2ndway} below. 
However, we have chosen the former approach to discuss in details because it is, in essence, analogous to the classical treatment of \cite{MR1108185}  with It\^o theory being replaced by the one from \cite{FHL21}.
For simplicity, we only consider \eqref{eqn.MV} with uniformly bounded coefficients. McKean--Vlasov equations with unbounded (smooth) coefficients are not well-posed in general, with counter examples given in \cite{scheutzow1987uniqueness}. We  shall not discuss equations with unbounded coefficients herein but refer to \cite{MR4260494} for recent affirmative developments.


\medskip
{\bf Outline of the paper.} The remainder of the present section introduces our main notations (even though some additional ones will be introduced along the text) and discusses a useful concept of regularity along certain specified directions. This concept is essential in characterizing the regularity of the coefficients with respect to the measure components (see \cref{ex.reg2,ex.lip}). We also recall standard notions for (controlled) rough paths. In \cref{sec:preliminaries}, we introduce the notions of stochastic controlled rough paths/vector fields and state our main results. \cref{sec.preparatory_material} is devoted to preliminary results that will be used throughout the rest of the paper. In particular, some properties of Lions lifts are discussed, along with a mean-value theorem for measure-dependent vector fields which is crucial in our analysis. We then state and prove the main composition lemmas which contain estimates that are used all throughout the paper. These form a central tool describing the construction of \( W \)-dependent vector fields from measure-dependent ones, composed with certain families of random variables.
The main results are proved in \cref{sec:proofs}.

\medskip

\noindent {\bf Funding:}  PKF acknowledges support from DFG CRC/TRR 388 ``Rough
Analysis, Stochastic Dynamics and Related Fields'', Projects A02, A07 and B04. 
KL acknowledges support from EPSRC
[grant number EP/Y016955/1]. PKF and KL thank Fabio Bugini for many discussions.

\section{Notation and preliminaries}
\subsection{Frequently used notation} 
\label{sub.rough_paths}
The notation $F\lesssim G$ means that $F\le CG$ for some positive constant $C$ which depends only on some universal parameters. We set $0/0 := 0$.
Throughout the manuscript we denote by $\Bor(E)$ the Borel-algebra of a topological space $E$.
We denote generic (i.e.\ without dimensionality assumption), real Banach spaces by the symbols $(\abx,|\cdot|_{\abx}),$ $(\mathcal Y,|\cdot|_{\mathcal Y})$ or simply \( \abx,\mathcal Y \). 
The notations \( V,\Vone,W,\bar W\) always refer to finite-dimensional real vector spaces, 
for which the norms are denoted indistinctly by \( |\cdot| \). 
For each integer $k\ge1$, $\LL^k(\abx,\mathcal Y)$ denotes the space of bounded multilinear maps from $[\abx]^k$ to $\mathcal Y$ (endowed with the usual induced norm) and \( \mathcal L:=\mathcal L^1\).
Let \( E\) be a Polish vector space, i.e.\ a separable topological vector space which is metrizable and fix a metric \( d\colon E\times E\to[0,\infty) \). 
We will let $\cpp(E)=\cpp_0(E)$ be the set of Borel probability measures on $E$ and $\cpp_q(E)$ be the subset of $\cpp(E)$ containing measures $\mu$ of order \( q >0\), i.e.\ such that $\int_E d(x_0,x)^q\mu(dx)<\infty$ for some (hence every) \( x_0\in E \).

Troughout the manuscript we assume that \( \W= (\Omega,\mathcal G,\P)\) is a Polish probability space. Integration against \( \P \) will be denoted by \( \E \) and conditional expectation with respect to a sub sigma-algebra \( \mathcal F\subset\mathcal G \) will be denoted by \( \E(\cdot|\mathcal F) \) or \( \E_{\mathcal F}  \).
By a \textit{\( \mathcal Y \)-valued random variable} we mean a map \( Z\colon \Omega\to \mathcal Y \) which is  \textit{strongly measurable}, in the sense that it is \( \mathcal G /\Bor(\mathcal Y)\)-measurable and separably-valued.%
\footnote{Meaning that there is a closed, separable subspace \( \mathcal Z \subset \mathcal Y\) such that \( Z \) is supported in \( \mathcal Z \).}
For a real number \( q\ge0 \),
the space of $\mathcal Y$-valued, $q$-integrable (i.e.\ with law in \( \mathcal P_q(\mathcal Y) \)) random variables is denoted by $L_q(\mathcal Y)$ or $L_q(\cgg;\mathcal Y)$ in order to emphasize the underlying sigma-algebra.
We denote by
\begin{equation}
	\label{lebesgue_q}
	Z\mapsto\|Z\|_{q}
	=
	\begin{cases}
		\E[|Z|_{\mathcal Y}^q ]^{\frac1q\wedge 1}\quad \text{if}\enskip q>0
		\\
		\E[|Z|_{\mathcal Y}\wedge1] \quad \text{if }q=0.
	\end{cases}
\end{equation}
The space \( (L_q,\|\cdot\|_{q}) \) is Banach if \( q\ge1 \) (it is the usual space of \( q \)-Lebesgue integrable random variables). It is a completely metrizable vector space if \( q\in [0,1) \) (endowed with the distance \( Z,\bar Z\mapsto \|Z-\bar Z\|_q \)). Some useful properties are recalled in \cref{app:Lq}.

\subsection{Lipschitz regularity along sub-Banach spaces}
\label{sec:along}

Fix a Banach space \( \aby \) and a topological vector space \( E \). 
For a map \( g\colon E\to \aby \) we denote the usual supremum norm by
\[
|g|_{\infty;E}=\sup_{x\in E}|g(x)|_\aby
\]
and we will simply write \( |g|_{\infty} \) if \( E \) is clear from the context.
The Banach space of strongly \( \Bor(E)/\Bor(\aby) \)-measurable maps so that \( |g|_\infty<\infty\) will be denoted by \( \mathcal B_b(E,\aby)\).\smallskip

Suppose that a Banach space $\abx$ exists such that $\abx\hookrightarrow E$, in sense of continuous embedding.
The next definition is a generalization of usual notions of regularity when the set of directions needs to be restricted to \( \abx \).
\begin{definition}\label{def.Xfrechet}\phantom{new line}
\begin{enumerate}[label=(\roman*)]
\item For $\kappa\in(0,1]$, $g$ is called $\kappa$-H\"older along $\abx$ if
\begin{equation}
\label{holder_along}
[g]_{\kappa;\mathcal X\subset E}= \sup_{x,y\in E:\ x-y\in \mathcal X}\frac{|g(x)-g(y)|_\aby}{|x-y|_\abx^\kappa} <\infty\,.  
\end{equation}

\item \label{def.Frechet} A function $g\colon E\to\aby$ is \( k \)-times Fr\'echet differentiable \textit{along $\abx$} 
(or \( k \)-times $\abx$-Fr\'echet differentiable)
if for every $x\in E$, the map \( \abx\to\aby \), \( \zeta\mapsto g(x+\zeta) \) is \( k \)-times Fréchet differentiable.
We denote by $D^kg(x)$ the corresponding derivative at $\zeta=0$ and we call it
the \( k \)-th Fréchet derivative of $g$ at $x$ along \( \mathcal X \) (obviously this depends on the choice of sub-Banach space, but we do not emphasize \( \abx \) in the corresponding symbol).

\item 
Similarly, a function \( g\colon E\to\aby \) is called \( k \)-times \textit{continuously} Fr\'echet differentiable along $\abx$ provided that it is \( k \)-times Fr\'echet differentiable along $\abx$ and that  the map $x\mapsto D^kg(x)$ is  continuous from $E$ to $\LL^k(\abx,\aby)$.

\item \label{def.Lip} For any real number of the form \( \gamma =N +\kappa\) with \( N=0,1,2,\dots \) and \( \kappa\in (0,1] \), a function $g\colon E\to\aby$ is called $\gamma$-Lipschitz along $\abx$ if it is \(N\)-times Fr\'echet differentiable along \(\mathcal X\) and its \( N \)-th Fr\'echet derivative along \( \mathcal X \) is itself $\kappa$-H\"older along $\abx$.
\end{enumerate}
We will denote by $\C^\gamma_{b,\abx}(E,\aby)$  the space of all $\gamma$-Lipschitz functions along $\abx$. When \( E =\abx\), \cref{def.Xfrechet} deduces to the usual Lipschitz function spaces, so we will drop the unnecessary subscripts and write \( \C^\gamma_{b}(\abx,\aby):=\C^\gamma_{b,\abx}(\abx,\aby) \).
\end{definition}
For each $g\in\C^\gamma_{b,\abx}(E,\aby)$, we define
\begin{equation}\label{def.hdernorm}
	[g]_{\gamma;\abx\subset E} :=\sum\nolimits_{k=1}^N|D^kg|_{\infty;E}+[D^Ng]_{\kappa;\abx\subset E}, \quad 
	|g|_{\gamma;\abx\subset E} :=|g|_{\infty;E}+[g]_{\gamma;\abx\subset E}\,.
\end{equation}
Again, if \( E=\abx \), we will drop the corresponding subscript in the notations.
The next statement is a mean-value theorem for functions on \( E \) which are differentiable along \( \abx \).

\begin{theorem}[Mean-value theorem]\label{thm.mvt}
Let \( E \) be a topological vector space and consider real Banach spaces
	\( \abx\hookrightarrow E\) and \(\aby \) a real Banach spaces as before. 
	Suppose that $g\colon E\to\aby$ is \textit{continuously} Fr\'echet differentiable along $\abx$. 
		Then, for any $x \in E$ and $\zeta \in \abx$, 
	the map \( \theta\mapsto Dg(x+\theta \zeta)[\zeta] \) from \( [0,1]\to \aby \) is strongly continuous  
	and we have 
	\begin{align}
		g(x+\zeta)-g(x)
	=\int_0^1 Dg(x+ \theta \zeta)[\zeta]d\theta\,.
	\label{id.mvt-intro}
	\end{align}
	Also, if  \( |Dg|_{\infty;E} 
	<\infty\), then \( g \) is Lipschitz along $\abx$ and \( [g]_{1;\abx\subset E}\le |Dg|_{\infty;E} \).
\end{theorem}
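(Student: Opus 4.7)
The plan is to reduce the statement to the classical fundamental theorem of calculus for Banach-valued maps on an interval, applied to the reduction $\phi(\theta) := g(x+\theta\zeta)$ for fixed $x\in E$ and $\zeta\in\abx$. The key observation is that although $E$ may be wild, the segment $\{x+\theta\zeta:\theta\in[0,1]\}$ lives in the affine copy of $\abx$ through $x$, where the differentiability hypothesis is genuine Fr\'echet differentiability.

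First I would verify the strong continuity claim. Since $\abx\hookrightarrow E$ continuously and $E$ is a topological vector space, the map $\theta\mapsto x+\theta\zeta$ is continuous from $[0,1]$ to $E$. Combined with the standing assumption that $x\mapsto Dg(x)$ is continuous from $E$ to $\LL(\abx,\aby)$, this gives continuity of $\theta\mapsto Dg(x+\theta\zeta)$ into $\LL(\abx,\aby)$, and then composition with the (bounded) evaluation-at-$\zeta$ map yields strong continuity of $\theta\mapsto Dg(x+\theta\zeta)[\zeta]$ in $\aby$. In particular this map is Bochner integrable on $[0,1]$.

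Next, I would show that $\phi$ is continuously differentiable on $[0,1]$ with $\phi'(\theta)=Dg(x+\theta\zeta)[\zeta]$. Fix $\theta\in[0,1]$ and an admissible increment $h$. Writing
\begin{equation*}
\phi(\theta+h)-\phi(\theta)=g\bigl((x+\theta\zeta)+h\zeta\bigr)-g(x+\theta\zeta),
\end{equation*}
the $\abx$-Fr\'echet differentiability of $g$ at the base point $x+\theta\zeta\in E$ applied to the increment $h\zeta\in\abx$ gives
\begin{equation*}
\phi(\theta+h)-\phi(\theta)=h\,Dg(x+\theta\zeta)[\zeta]+o(|h\zeta|_{\abx})=h\,Dg(x+\theta\zeta)[\zeta]+o(h),
\end{equation*}
so $\phi'(\theta)=Dg(x+\theta\zeta)[\zeta]$, and continuity of $\phi'$ follows from the previous step. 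The Bochner version of the fundamental theorem of calculus then yields \eqref{id.mvt-intro}. Finally, for the Lipschitz bound, if $|Dg|_{\infty;E}<\infty$ then $|Dg(x+\theta\zeta)[\zeta]|_{\aby}\le|Dg|_{\infty;E}|\zeta|_{\abx}$ uniformly in $\theta$, so integrating \eqref{id.mvt-intro} gives $|g(x+\zeta)-g(x)|_{\aby}\le|Dg|_{\infty;E}|\zeta|_{\abx}$, which is exactly $[g]_{1;\abx\subset E}\le|Dg|_{\infty;E}$.

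The only mildly delicate point is ensuring that the Fr\'echet differentiability hypothesis, which is phrased locally around each base point in $E$ with directions in $\abx$, can be patched into a classical derivative of the scalar-parameter curve $\phi$; but this is precisely what the definition delivers once one recognises that all increments $h\zeta$ lie in $\abx$. Everything else is routine once strong continuity and Bochner integrability are in place.
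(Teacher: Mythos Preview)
Your proof is correct and follows essentially the same approach as the paper's: both reduce to the fundamental theorem of calculus for the $C^1$ Banach-valued curve $\theta\mapsto g(x+\theta\zeta)$, after verifying continuity of $\theta\mapsto Dg(x+\theta\zeta)[\zeta]$ via the assumed continuity of $Dg:E\to\LL(\abx,\aby)$. The paper phrases things in terms of the auxiliary map $g_x:\zeta\mapsto g(x+\zeta)$ and speaks of Riemann rather than Bochner integrals (which coincide here by continuity), but the substance is identical.
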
 

\begin{proof} 
Fix $x \in E$. By \cref{def.Xfrechet}, the map $g_x : \zeta \mapsto g (x + \zeta)$ is continuously differentiable along $\abx$.
Furthermore, $D g (x) = D g_x (0) \in \LL (\abx, \aby)$ and the map $e \mapsto D g (e) \in \mathcal{L} (\abx, \aby)$ is continuous; and so is
$\theta \mapsto  D g (x + \theta \zeta) \in \cll (\abx, \aby)$ and thus
also $\theta \mapsto 
D g (x + \theta \zeta)[\zeta] \in \aby
$.
Since
 $
 D g (x + \theta \zeta)[\zeta]
 = 
 D g_{x + \theta \zeta}(0)[\zeta]
  = \frac{\partial}{\partial \theta} g_{x + \theta \zeta} (0) =
\frac{\partial}{\partial \theta} g_x (\theta \zeta)$ the fundamental theorem of calculus for $C^1$ paths in Banach spaces yields\footnote{Integrals here are understood in Riemann sense.} 
\[ g (x + \zeta) - g (x) = g_x (\zeta) - g_x (0) = \int_0^1 \frac{\partial}{\partial
   \theta} g_x (\theta \zeta) d \theta = 
   \int_0^1 D g (x + \theta \zeta)[\zeta]
   d \theta .
    \]
It also follows that $[g]_{1 ; \abx \subset E} \le |Dg|_{\infty ; E}$, since
\[ | D g (x + \theta \zeta) [\zeta] | \leqslant |Dg (x + \theta \zeta) |_{\LL (\abx, \aby)} \times | \zeta |_{\abx}
   \leqslant |Dg|_{\infty ; E} \times | \zeta |_{\abx} .\]
\end{proof}  

The following example will be important for our purposes.
\begin{example}[Regularity along Banach spaces of random variables]
\label{exa:regularity_rv}
Let \( \W = (\Omega,\mathcal G,\mathbb P) \) be a probability space and consider moment spaces 
\( L_r =L_r(\W)\) for \( r\ge0 \). Fix real numbers $\gamma\ge0$, $q \ge0$ and $p\ge 1\vee q$
and take \( g \) to be an element in the space of \( \gamma \)-Lipschitz functions on \( L_q \) along \( L_p\subset L_q \). We use the shorthand notation, with $\mathcal{Y}$ a real Banach spaces as before,
\begin{equation}
\label{nota:Cbpq}
	\begin{aligned}
&g\in\C^\gamma_{b,p,q}:=\C^\gamma_{b,L_p}( L_q,\mathcal Y) 
\\
&|g|_{\mathcal C^\gamma_{b,p,q}}:=|g|_{\infty; L_q}+[g]_{\gamma;L_p\subset L_q}\,.
\end{aligned}
\end{equation}
By definition, for each integer $0\le k<\gamma$ and each $\xi\in  L_q$, $D^kg(\xi)$ is a multilinear map on $[L_p]^k$ and 
\[
	\sup_{\xi\in  L_q}|D^kg(\xi)[\eta_1,\ldots,\eta_k]|\le 
	|g|_{\mathcal C^\gamma_{b,p,q}}
	\|\eta_1\|_p\ldots\|\eta_k\|_p \quad \text{for all } \eta_1,\ldots,\eta_k\in L_p.
\]
Furthermore, if $\gamma=N+ \kappa$ for some integer \(N\) and \(\kappa\in (0,1]\), then we have 
\[
	|D^{N}g(\xi)[\eta_1,\ldots,\eta_N]-D^{N}g(\bar \xi)[\eta_1,\ldots,\eta_N]|
	\le 
	|g|_{\mathcal C^\gamma_{b,p,q}}
	\|\xi-\bar \xi\|_p^{\kappa}\|\eta_1\|_p\ldots\|\eta_{N}\|_p
\]
for every $\xi,\bar \xi\in  L_q$; $ \eta_1,\ldots,\eta_{N}\in L_p$.


\begin{remark}
Similar notations will be used if we substitute \( E= W\times L_q\) and \( \abx=W\times L_p\).
This will result in a slight abuse of notation, which however can be easily clarified from the context.
In particular, the previously discussed properties are equally true, mutatis mutandis, of
\[
\C^\gamma_{b,p,q}:=\C^\gamma_{b,W\times L_p}(W\times  L_q,\mathcal Y).
\]
Namely, if \( g\in \C^\gamma_{b,p,q} \) then for any integer \(k\in[0,\gamma)\),
\begin{equation}\label{est.normDkg}
		\sup_{(y,\xi)\in W\times L_q}|D^k g(y,\xi)[(z_1,\eta_1),\ldots,(z_k,\eta_k)]|\le 
		|g|_{\mathcal C^\gamma_{b,p,q}}
		(|z_1|+ \|\eta_1\|_p)\ldots(|z_k|+ \|\eta_k\|_p)
\end{equation}
and if $\gamma=N+ \kappa$ for some integer \(N\) and \(\kappa\in (0,1]\), then
\begin{multline}
\label{est.normdgammag}
		|D^{N}g(y,\xi)[(z_1,\eta_1),\ldots,(z_N,\eta_N)]-D^{N}g(\bar \xi,\bar y)[(z_1,\eta_1),\ldots,(z_N,\eta_N)]|
		\\
		\le |g|_{\C^\gamma_{b,p,q}}(|y-\bar y|+\|\xi-\bar \xi\|_p)^{\kappa}
		(|z_1|+\|\eta_1\|_p)\ldots(|z_N|+\|\eta_N\|_p)
\end{multline}
for every $y,\bar y,z_1,\dots z_N\in W$; $\xi,\bar \xi\in  L_q$ and \( \eta_1,\dots,\eta_N\in L_p. \)
These properties will be used tacitly in the rest of the paper. 
\end{remark}
\end{example}
\subsection{Rough paths}
Throughout the paper we fix a finite time horizon \( T>0\) and make use of the the following notation for paths. For every \( [c,d]\subset[0,T] \), we introduce
\[
\begin{aligned}
	&\Delta (c,d) \vcentcolon=\{(s,t)\in[0,T]^2:\enskip c\le s\leq t\le d\},
\end{aligned}
\]
and we abbreviate \( \Delta=\Delta(0,T)\). 
Let \( E \) be a topological vector space and let \( Y\colon [0,T]\to E\) be a continuous path. We introduce the increment of \( Y \) as the two-parameter quantity
\begin{equation}
	\label{nota:increments}
	\delta Y_{s,t}\vcentcolon=Y_t-Y_s,
\end{equation} 
for every \( (s,t)\in\Delta \).
If \( E=(\mathcal Y,|\cdot|) \) is Banach and \( \kappa\ge0 \), we denote by \( C^{\kappa}_2 ([0,T];\mathcal Y)\) the space of
\( 2 \)-index maps \(A\colon\Delta\to \mathcal Y\) such that
\begin{equation}
\label{holder_A}
  |A|_\kappa \vcentcolon=\sup_{(s,t)\in\Delta,s\neq t }\frac{|A_{s,t}|}{(t-s)^\kappa } <\infty 
\end{equation}
 which is also Banach.
Consistently with \cref{sec:along}, we also let \( C^\kappa([0,T];\mathcal Y)=\{Y\colon [0,T]\to\mathcal Y\text{ such that } \delta Y\in C_2^{\kappa}([0,T];\mathcal Y)\}\). It is equipped with the norm \( |Y|_\kappa:=\sup_{t\in [0,T]}|Y_t| + |\delta Y|_{\kappa} \).

Finally, we recall the definition of a 2-step, H\"older rough path (as encountered for instance in \cite{friz2020rough}). Recall that \( V\) refers to a finite-dimensional real Banach space.
\begin{definition}\label{def.RP}
	Let $\alpha$ be a fixed number in  $(\nicefrac13,\nicefrac12]$ and $T>0$. We call $\X=(X,\XX)$ a two-step, $\alpha $-H\"older \textit{rough path} on $[0,T]$ with values in $V$, in symbols 
	$\X \in \mathscr{C}^\alpha ([0,T];V)$, if and only if
	\begin{enumerate}[(i)]
		\item $(X,\XX)$ belongs to $C^\alpha([0,T];V)\times C_2^{2 \alpha}([0,T];V\otimes V)$,
		\item for every $(s,u,t)\in [0,T]^3$ with \( s\le u\le t \), Chen's relation 
		\begin{equation}\label{chen}
			\bxx_{s,t}-\bxx_{s,u}-\bxx_{u,t}=\delta X_{s,u}\otimes \delta X_{u,t}
		\end{equation}
is satisfied.
	\end{enumerate} 
For $\X$ as above, we denote
\[
|\X|_\alpha=|\delta X|_\alpha+|\XX|_{2 \alpha}
\tand\nn{\X} _\alpha=|\delta X|_\alpha+|\XX|_{2 \alpha}^{1/2}.
\]	
For any \( \alpha'\in[0,1]\), the distance $\rho_{\alpha,\alpha'}$ between two rough paths $\X$ and $\bar\X$ is defined by
\begin{equation}
	\label{def.rho_metric}
	\rho_{\alpha,\alpha'}(\X,\bar\X)=|\delta X- \delta\bar X|_\alpha+|\XX-\bar\XX|_{\alpha+ \alpha'}.
\end{equation}
We abbreviate $\rho_\alpha=\rho_{\alpha,\alpha}$.
\end{definition}
%


\subsection{Rough stochastic differential equations} 
\label{sub:rsde}
Let $\W=(\Omega,\mathcal G,\{\cff_t\},\P)$ be a filtered probability space, $B$ be a standard $\{\cff_t\}$-Brownian motion in $\Vone$, $\bmxx=(X,\bxx) $ be a deterministic rough path in $\mathscr{C}^\alpha(V)$  with $\alpha\in(\frac13,\frac12]$. We consider the rough stochastic differential equation
\begin{equation}\label{eq:RSDE}
	dY_t(\omega)=b_t(\omega,Y_t(\omega))dt+\sigma_t(\omega,Y_t(\omega))dB_t(\omega)+(f_t,f'_t)(\omega,Y_t(\omega))d\X_t
	,\quad t\in[0,T].
\end{equation}
We are given a drift nonlinearity
$b\colon \Omega\times [0,T]\times W\to W$, and vector fields
$\sigma\colon\Omega\times[0,T]\times W\to \LL(\Vone,W)$, 
$f\colon \Omega\times[0,T]\times W\to\LL(V,W)$, $f'\colon \Omega\times[0,T]\times W\to \LL(V,\LL(V,W))$.
We assume further that $b,\sigma,f,f'$ are progressive and that for each $t$ and a.s. $\omega$, $y\mapsto f_t(\omega,y)$ is differentiable with derivative $Df_t(\omega,y)$.
In what follows, we omit the $\omega$-dependence in the coefficients $\sigma,b,f,f',Df$.
The definition of integrable solutions to rough stochastic differential equations as above is completely transparent from \cref{def.MVsoln} (just drop the \(  L_q'(W)\)-dependence in the variables). Hence, we do not recall it here and refer nonetheless to \cite{FHL21} for details.

We start with a statement which asserts that solutions of \eqref{eq:RSDE} satisfy a priori estimates and provides quantitative bounds in terms of \( \mathbf D_X^{\alpha,\beta}L_{m,\infty}\) semi-norms. For our purposes, we need to emphasize the dependency upon the best constant \( \varkappa=\varkappa(W)>0\) that appears in the upper BDG inequality for \( W\)-valued processes (recall that \( W\) is finite dimensional).
\begin{proposition}[{\cite[Proposition 4.5]{FHL21} }]\label{prop.apri}
	Suppose that $b,\sigma$ are random bounded continuous and $(f,f')$ belongs to $\mathbf{D}^{\beta,\beta'}_XL_{m,\infty}\C^{\gamma-1}_b$ with
	\(\beta \in (0,\alpha]\), \(\beta'\in(0,1]\) and \(\gamma\in (2,3]\) such that \(\alpha+(\gamma-1)\beta>1\) and \(\alpha+\beta+\beta'>1\).
	Let $Y$ be an $L_{m,\infty}$-solution to \eqref{eq:RSDE} 
	and take any finite constant $M$ such that
	\begin{equation*}
		\|(f,f')\| _{\gamma-1;\infty}+\bk{(f,f')}_{X;\beta,\beta';m,\infty}\le M.
	\end{equation*}
	Then, there exists a constant $C$ depending only on $T,m,\alpha,\beta,\beta',\gamma$ and \( \varkappa\) such that 
	\begin{gather}\label{est.apri.m}
		\|\delta  Y\|_{\alpha;m,\infty}+\|\E_\bigcdot R^Y\|_{\alpha+\beta;\infty}
		\le C(1+\|b\|_\infty+\|\sigma\|_\infty+M\nn{\X}_\alpha)^{1/\beta''},
	\end{gather}
	where $\beta''=[(\gamma-2)\beta]\wedge\beta'$ and $R^{Y}_{s,t}=\delta Y_{s,t}-f(Y_s)\delta X_{s,t}$.
\end{proposition}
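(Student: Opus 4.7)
The plan is to derive the estimate via a buckling (partition) argument, splitting the equation into its drift, Brownian, and rough integral components and carefully tracking each through the stochastic sewing framework of \cite{FHL21}.

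\emph{Step 1 (Local expansion).} First, I would use the definition of the rough stochastic integral to obtain the Davie-type local expansion
\[
\int_s^t (f_r,f_r') \, d\X_r \;=\; f_s(Y_s)\, \delta X_{s,t} \;+\; f_s'(Y_s)\, \bxx_{s,t} \;+\; J_{s,t},
\]
where $J_{s,t}$ is a remainder whose conditional expectation $\E_s J_{s,t}$ has order $(t-s)^{1+\epsilon}$ for some $\epsilon>0$, while $J_{s,t}$ itself has order $(t-s)^{(\alpha+\beta)\wedge(1/2+\alpha)}$ in $L_m$. This is exactly the content of the stochastic sewing construction, whose quantitative bound yields a prefactor controlled by $M \nn{\X}_\alpha$. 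Combining this with the drift/diffusion terms:
\[
R^Y_{s,t} \;=\; \int_s^t b_r\, dr \;+\; \int_s^t \sigma_r\, dB_r \;+\; f_s'(Y_s)\, \bxx_{s,t} \;+\; J_{s,t}.
\]

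\emph{Step 2 (Two separate bounds).} Taking conditional expectation $\E_s$ kills the It\^o integral, so
\[
|\E_s R^Y_{s,t}| \;\lesssim\; \|b\|_\infty (t-s) + M |\bxx|_{2\alpha}(t-s)^{2\alpha} + |\E_s J_{s,t}|,
\]
which already exhibits regularity of order $\alpha+\beta$ (as $2\alpha>\alpha+\beta$ and $1>\alpha+\beta$ under the hypotheses). For the full $R^Y_{s,t}$, the BDG inequality on the diffusion part produces $\|\sigma\|_\infty (t-s)^{1/2} \varkappa$, and combined with the remaining terms gives $L_m$-regularity of order $\alpha$ for $\delta Y$ through $\delta Y_{s,t} = R^Y_{s,t} + f_s(Y_s)\delta X_{s,t}$.

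\emph{Step 3 (Buckling).} The main obstacle is that $J_{s,t}$ involves the modulus of continuity of $(f(Y),f'(Y))$, which depends on the very quantities $\|\delta Y\|_{\alpha;m,\infty}$ and $\|\E_\bigcdot R^Y\|_{\alpha+\beta;\infty}$ we want to control. I would proceed by choosing a subdivision $0=t_0<t_1<\cdots<t_N=T$ whose mesh $h$ is small enough that the contribution of the nonlinear part on each $[t_i,t_{i+1}]$ can be absorbed into the left-hand side; concretely, after summing the local bounds with the exponent $\beta''=[(\gamma-2)\beta]\wedge\beta'>0$, one obtains an inequality of the form
\[
N_i \;\le\; C\bigl(1+\|b\|_\infty+\|\sigma\|_\infty+ M\nn{\X}_\alpha\bigr) h^{\beta''} + \tfrac12 N_i,
\]
where $N_i$ denotes the relevant norm on $[t_i,t_{i+1}]$. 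Choosing $h$ depending only on $M\nn{\X}_\alpha$ allows absorption, giving a uniform local bound.

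\emph{Step 4 (Assembly).} Finally, the number of intervals $N \sim T/h$ is controlled by a power of $(1 + M\nn{\X}_\alpha)$ with exponent $1/\beta''$, so piecing together the local bounds yields the claimed global estimate with the exponent $1/\beta''$ on the right-hand side. The constant $C$ tracks only $T,m,\alpha,\beta,\beta',\gamma$ and the BDG constant $\varkappa$, as required.
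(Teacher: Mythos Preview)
The paper does not prove this proposition; it is quoted verbatim from \cite[Proposition~4.5]{FHL21} and used as a black box, so there is no proof in the present paper to compare against.

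That said, your outline is the standard route taken in \cite{FHL21} for such a priori bounds: local Davie expansion via stochastic sewing, separate treatment of $\E_s R^Y$ versus the full increment (with BDG supplying the $\varkappa$-dependence), and a buckling/greedy-partition argument to close the self-referential estimate. One small glitch: the displayed inequality in Step~3 is miswritten. The correct local structure is
\[
N_i \;\le\; C\bigl(1+\|b\|_\infty+\|\sigma\|_\infty+M\nn{\X}_\alpha\bigr) \;+\; C' \, h^{\beta''}\, N_i^{\,1+\theta}\quad\text{(or a linear $N_i$ term)},
\]
with the factor $h^{\beta''}$ attached to the \emph{nonlinear} feedback term, not to the data; one then chooses $h$ so that $C' h^{\beta''}$ (times the relevant constants) is $\le \tfrac12$, which forces $h^{-1}\sim (1+M\nn{\X}_\alpha)^{1/\beta''}$ and produces the exponent $1/\beta''$ upon gluing $N\sim T/h$ pieces. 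As written, your inequality would give $N_i\lesssim h^{\beta''}$, which is too small and inconsistent with the final bound. This is a bookkeeping slip rather than a conceptual error; the mechanism you describe in Steps~3--4 is otherwise correct.
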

In particular, the constant \( C\) in \eqref{est.apri.m} does not depend on the particular dimension of \( W\).
However, the dependency on $W$ still presents itself when estimating $\kappa(W)$ and the norms of the coefficients \( b,\sigma,f,f'\), whose definition depends upon the induced norms of \( \LL(\Vone,W),\LL(V,W),\LL(V,\LL(V,W))\).

The next result summarizes the main existence and uniqueness results for rough stochastic differential equations of this kind.
\begin{theorem}[{\cite[Theorem 4.6]{FHL21}}]\label{thm.fixpoint}
	Let $m\ge2$ be real and $\X\in \CC^\alpha$ with \( \frac13< \alpha\le\frac12 \). Let $b,\sigma$ be random bounded Lipschitz functions (see \cite[Definition 4.1]{FHL21}), assume that $(f,f')$ belongs to  $\mathbf{D}^{\alpha, \alpha}_XL_{m,\infty}\C^\gamma_b$ while $(Df,Df')$ belongs to  $\mathbf{D}^{\alpha,\alpha''}_XL_{m,\infty}\C^{\gamma-1}_b$.
	Assume moreover that $\gamma>\frac1 \alpha$ and $2 \alpha+\alpha''>1$. Then for every $\xi\in  L_0(\cff_0;W)$, there exists a unique $L_{m,\infty}$-integrable continuous  solution to \eqref{eq:RSDE} starting from $\xi$ over any finite time interval.
\end{theorem}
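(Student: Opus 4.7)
The plan is a standard Picard/fixed--point argument performed on short intervals and then patched together. Fix the initial condition $\xi\in L_0(\cff_0;W)$ and work on a subinterval $[0,\tau]\subset[0,T]$ whose length will be chosen small later. Given a candidate process $Y$ taking values in $W$, together with its Gubinelli derivative $Y'=f(Y)$, I would define the map $\Phi\colon (Y,Y')\mapsto (Z,Z')$ by setting $Z'_t=f_t(Y_t)$ and
\[
Z_t=\xi+\int_0^tb_s(Y_s)\,ds+\int_0^t\sigma_s(Y_s)\,dB_s+\int_0^t(f_s(Y_s),(Df_s\,f_s+f'_s)(Y_s))\,d\X_s,
\]
where the last integral is the rough stochastic integral from \cite{FHL21}. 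The working space would be the set of stochastic controlled rough paths $(Y,f(Y))\in \mathbf D_X^{\alpha,\alpha}L_{m,\infty}$ starting from $\xi$ with $\bk{(Y,f(Y))}_{X;\alpha,\alpha;m,\infty}\le R$ for some $R$ to be fixed, equipped with the inherited norm.

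The first key step is to verify that $\Phi$ is well defined, i.e.\ that $(f(Y),Df(Y)\,f(Y)+f'(Y))$ lies in the space $\mathbf D_X^{\beta,\beta'}L_{m,\infty}\C_b^{\gamma-1}$ required by \cref{prop.apri}. This is precisely the content of the composition lemmas alluded to in \cref{sec.preparatory_material}: the assumptions $(f,f')\in\mathbf D_X^{\alpha,\alpha}L_{m,\infty}\C_b^\gamma$ and $(Df,Df')\in\mathbf D_X^{\alpha,\alpha''}L_{m,\infty}\C_b^{\gamma-1}$, together with the chain rule for Gubinelli derivatives, produce quantitative bounds of the form
\[
\bk{(f(Y),Df(Y)f(Y)+f'(Y))}_{X;\alpha,\alpha'';m,\infty}\lesssim \Psi\bigl(\|(f,f')\|_{\gamma;\infty},\|(Df,Df')\|_{\gamma-1;\infty},\bk{(Y,f(Y))}_{X;\alpha,\alpha;m,\infty}\bigr),
\]
where $\Psi$ depends polynomially on its arguments. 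The condition $\gamma>1/\alpha$ guarantees $\alpha+(\gamma-1)\alpha>1$, so that the rough integral in the definition of $Z$ is meaningful and \cref{prop.apri} applies.

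Next, invariance of the ball of radius $R$ follows from the a priori estimate \eqref{est.apri.m}, which yields a bound on $\bk{(Z,f(Y))}_{X;\alpha,\alpha;m,\infty}$ of the form $C(1+\|b\|_\infty+\|\sigma\|_\infty+\Psi(\cdots)\nn{\X}_\alpha)^{1/\beta''}$. Choosing $R$ large enough, this bound is $\le R$ provided $\tau$ is small enough to absorb the factor $\tau^{\kappa}$ (for some $\kappa>0$) that arises because the remainder, increments, and the drift/Brownian contributions all come with positive powers of the interval length. For contraction I would work in a weaker norm, e.g.\ the sup norm $\sup_{t\le\tau}\|Y_t-\bar Y_t\|_m$ together with the $L_{m,\infty}$ norm of the Gubinelli derivative, and use the stability results for RSDEs from \cite{FHL21} in combination with the Lipschitz dependence of $f(Y)-f(\bar Y)$ (and analogous quantities) on $Y-\bar Y$ coming from the composition lemmas. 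Again a small power of $\tau$ appears, which makes $\Phi$ a strict contraction on short intervals.

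Shrinking $\tau$ if necessary, Banach's fixed--point theorem provides a unique solution on $[0,\tau]$. Because the a priori estimate \eqref{est.apri.m} and the composition bounds depend on $\xi$ only through its presence as the starting point (not through moments, owing to boundedness of $b,\sigma,f,f'$), $\tau$ can be chosen independent of the initial condition; hence the construction can be iterated on $[\tau,2\tau]$ starting from $Y_\tau$, and so on, to cover $[0,T]$. Uniqueness on each subinterval propagates to $[0,T]$. The main obstacle is really the first step: setting up the composition so that both the invariance estimate and the contraction estimate close up simultaneously, which forces the double hypothesis on $(f,f')$ and $(Df,Df')$ and explains the appearance of the exponents $\beta,\beta',\alpha''$ with $\alpha+(\gamma-1)\alpha>1$ and $2\alpha+\alpha''>1$.
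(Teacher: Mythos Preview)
The paper does not prove this statement. Theorem~\ref{thm.fixpoint} is quoted verbatim from \cite[Theorem~4.6]{FHL21} and used throughout as a black box; the paper even remarks explicitly after \cref{thm.stability_precise} that ``in the current article, we will not apply the stochastic sewing lemma directly but rather rely solely on \cref{thm.fixpoint,thm.stability_precise}.'' So there is no in-paper proof to compare your attempt against.

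As for the content of your sketch: the broad outline (Picard iteration on a ball of controlled rough paths, invariance from an a~priori estimate, contraction from a stability estimate, then patching) is the right shape for a result of this kind and is presumably how \cite{FHL21} proceeds. But two points would need fixing if you actually carried this out. First, you invoke ``the composition lemmas alluded to in \cref{sec.preparatory_material}'' to control $(f(Y),Df(Y)f(Y)+f'(Y))$; those lemmas in the present paper (\cref{prop.newcvec,lem.compose2}) are written specifically for measure-dependent vector fields composed with a stochastic controlled rough path via the $\comp$ operation, not for the plain composition $y\mapsto f_t(\omega,y)$ along a $W$-valued controlled path that is needed here. The relevant composition estimates for \eqref{eq:RSDE} live in \cite{FHL21}, not in this paper. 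Second, your contraction step appeals to ``the stability results for RSDEs from \cite{FHL21}'' --- but \cref{thm.stability_precise} already presupposes that solutions exist, so using it to obtain existence is circular; in \cite{FHL21} the contraction is obtained directly at the level of the Picard map, via stochastic sewing, before any solution theory is in place.
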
	
Finally, we recall the main result concerning the continuous dependence of solutions of \eqref{eq:RSDE} with respect to data.
\begin{theorem}[{\cite[Theorem 4.9]{FHL21}}]
	\label{thm.stability_precise}
	Let $\xi,\bar \xi$ be in $ L_0(\cff_0)$; $\X,\bar \X$ be in $\CC^\alpha$, $\alpha\in(\frac13,\frac12]$; \( \sigma,\bar\sigma,b,\bar b \) be random bounded continuous functions; fix \( m\ge2 \), and parameters \( \gamma\in (2,3] ,\beta\in [0,\alpha] \) such that \(\alpha + (\gamma-1)\beta>1\).
	Consider
	$(f,f')\in \mathbf{D}^{\beta,\beta}_XL_{m,\infty}\C^\gamma_b$ such that $(Df,Df')$ belongs to $\mathbf{D}^{\beta,\beta'}_XL_{m,\infty}\C^{\gamma-1}_b$
	where \( \beta'>0 \) is taken such that
	\[
	1-\alpha-\beta <\beta'\le1,
	\]
	and fix another stochastic controlled vector field $(\bar f,\bar f')\in\mathbf{D}^{\beta,\beta'}_{\bar X}L_{m,\infty}\C^{\gamma-1}_b$.
	Let $Y$ be a $L_{m,\infty}$-integrable solution to \eqref{eq:RSDE} starting from $\xi$, and similarly denote by
	$\bar Y$ a $L_{m,\infty}$-integrable solution to \eqref{eq:RSDE} starting from $\bar \xi$ with associated coefficients $(\bar \sigma,\bar f,\bar f',\bar b,\bar \X)$.
	
	Then, denoting by \( R^Y_{s,t}=\delta Y_{s,t}-f(Y_s)\delta X_{s,t}$ and $\bar R^{\bar Y}_{s,t}=\delta\bar Y_{s,t}-\bar f(\bar Y_s)\delta\bar X_{s,t}\)
	and recalling the notations \eqref{def.rho_metric} and \eqref{def.bk_metric},
	we have the estimate 
	\begin{align}
		&\|\sup_{t\in[0,T]}|\delta Y_{0,t}-\delta\bar Y_{0,t}|\|_m+ \|\delta Y- \delta\bar Y\|_{\alpha;m}+\|\delta f(Y)-\delta\bar f(\bar Y)\|_{\beta;m} + \|\E_{\bigcdot} R^Y-\E_{\bigcdot}\bar R^{\bar Y}\|_{\alpha+ \beta;m}
		\nonumber\\&\lesssim\||\xi-\bar \xi|\wedge1\|_m+\sup_{t\in[0,T]}\|\sup_{x\in W}|\sigma_t(x)-\bar \sigma_t(x)|\|_m+\sup_{t\in[0,T]}\|\sup_{x\in W}|b_t(x)-\bar b_t(x)|\|_m
		\nonumber\\&\quad+\rho_{\alpha,\beta}(\X,\bar \X)+\|(f-\bar f,f'-\bar f')\|_{\gamma-1;m}+\bk{f,f';\bar f,\bar f'}_{X,\bar X;\beta,\beta';m},
		\label{est.stability_precise}
	\end{align}
	where the implied constant depends on $|\X|_\alpha,|\bar\X|_\alpha,b,\sigma,f,f'$ and \( \varkappa.\)
\end{theorem}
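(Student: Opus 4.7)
The plan is to derive a rough-stochastic integral equation for the difference $Z := Y - \bar Y$ together with its ``joint Gubinelli remainder'' $R^Y - \bar R^{\bar Y}$, and then mimic the a priori argument of Proposition \ref{prop.apri}, suitably adapted to the presence of two driving rough paths.

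First, I would split each coefficient difference into a ``state-Lipschitz'' piece and a ``coefficient-difference'' piece using the mean-value theorem (Theorem \ref{thm.mvt}). For instance,
\[
b_t(Y_t) - \bar b_t(\bar Y_t) = \Bigl(\int_0^1 Db_t(\bar Y_t + \theta\,\delta Z_t)\,d\theta\Bigr)[\delta Z_t] + \bigl[b_t(\bar Y_t) - \bar b_t(\bar Y_t)\bigr],
\]
and similarly for $\sigma$. For the rough integrand, I would use the algebraic identity
\[
(f,f')(Y)\,d\X - (\bar f,\bar f')(\bar Y)\,d\bar\X = \bigl[(f,f')(Y) - (f,f')(\bar Y)\bigr]\,d\X + \bigl[(f-\bar f,\ f'-\bar f')\bigr](\bar Y)\,d\X + (\bar f,\bar f')(\bar Y)\,d(\X - \bar\X),
\]
so that the first term is controlled by $\delta Z$, the second by the coefficient differences, and the third by the rough path distance $\rho_{\alpha,\beta}(\X,\bar\X)$.

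Second, using the composition lemmas for stochastic controlled vector fields (to be established in Section \ref{sec.preparatory_material}), I would show that the three integrands above are stochastic controlled rough paths of type $\mathbf D^{\beta,\beta'}_X L_{m,\infty}$, with seminorms bounded by the data differences plus the unknowns $\|\delta Z\|_{\alpha;m}$ and $\|\E_\bigcdot (R^Y - \bar R^{\bar Y})\|_{\alpha+\beta;m}$, via a Taylor expansion of $f(Y_t) - \bar f(\bar Y_t)$ around $\bar Y_s$ and explicit control of both Gubinelli remainders. Applying the stochastic sewing lemma to the germ
\[
A_{s,t} = f(Y_s)\delta X_{s,t} - \bar f(\bar Y_s)\delta\bar X_{s,t} + \bigl[f(Y_s)f'(Y_s) + Df(Y_s)f(Y_s)\bigr]\XX_{s,t} - \bigl[\bar f(\bar Y_s)\bar f'(\bar Y_s) + D\bar f(\bar Y_s)\bar f(\bar Y_s)\bigr]\bar\XX_{s,t},
\]
and combining with BDG bounds for the It\^o contributions, I would obtain an inequality of the form
\[
\|\delta Z\|_{\alpha;m;[s,t]} + \|\E_\bigcdot(R^Y - \bar R^{\bar Y})\|_{\alpha+\beta;m;[s,t]} \le C\,(\text{data differences}) + C\,(t-s)^\eta\bigl(\|\delta Z\|_{\alpha;m;[s,t]} + \|\E_\bigcdot(R^Y - \bar R^{\bar Y})\|_{\alpha+\beta;m;[s,t]}\bigr)
\]
for some $\eta > 0$ depending on $\alpha, \beta, \beta', \gamma$. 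Choosing a fine enough partition of $[0,T]$ absorbs the linear remainder, and iterating over the partition produces \eqref{est.stability_precise}. The bound on $\|\delta f(Y) - \delta\bar f(\bar Y)\|_{\beta;m}$ then follows a posteriori from the composition lemmas, while the $L^m$-bound on $\sup_{t}|\delta Y_{0,t}-\delta\bar Y_{0,t}|$ follows from BDG applied after integration.

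The main obstacle will be the precise treatment of the term $(\bar f,\bar f')(\bar Y)\,d(\X - \bar\X)$ and of the cross contributions coming from Chen's relation \eqref{chen} applied to $\X - \bar\X$: they mix two different controlling rough paths, and their bound must be sensitive to $\rho_{\alpha,\beta}$ rather than $\rho_\alpha$. This is precisely what forces the joint seminorm $\bk{f,f';\bar f,\bar f'}_{X,\bar X;\beta,\beta';m}$ on the right-hand side. Careful bookkeeping of Gubinelli remainders with respect to both $X$ and $\bar X$, and of the Taylor expansions of $Df(Y_s) - D\bar f(\bar Y_s)$, is essential to produce the correct scaling exponents and to close the argument.
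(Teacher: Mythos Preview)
This theorem is not proved in the paper. It is quoted verbatim from \cite[Theorem 4.9]{FHL21} in Section~\ref{sub:rsde} as a preliminary result, and the authors state explicitly after it that ``in the current article, we will not apply the stochastic sewing lemma directly but rather rely solely on \cref{thm.fixpoint,thm.stability_precise}.'' There is therefore no in-paper proof to compare your proposal against; the paper treats this statement as a black box imported from \cite{FHL21}.

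That said, your sketch is a reasonable outline of how the result is established in \cite{FHL21}: linearize the coefficient differences via the mean-value theorem, control the rough integral contribution by stochastic sewing applied to the natural germ, use BDG for the It\^o part, obtain a local smallness estimate with a factor $(t-s)^\eta$, and iterate. One point to be careful about if you were to carry this out: your germ $A_{s,t}$ has the second-level terms written as $f(Y_s)f'(Y_s)$ and $\bar f(\bar Y_s)\bar f'(\bar Y_s)$, whereas the correct Davie expansion (cf.\ \eqref{def.JYeta}) uses $Df(Y_s)f(Y_s)+f'(Y_s)$; the product structure you wrote would not match the controlled-rough-path algebra. Also, the ``composition lemmas'' of Section~\ref{sec.comp} are tailored to the measure-dependent setting and are \emph{consequences} of \cref{thm.stability_precise} in the logical flow of this paper, not ingredients for its proof; in \cite{FHL21} the relevant composition estimates live at the level of ordinary $W$-dependent stochastic controlled vector fields.
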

These results are far-reaching applications of the stochastic sewing lemma from \cite{le2018stochastic,LeSSL2}. In the current article, we will not apply the stochastic sewing lemma directly but rather rely solely on \cref{thm.fixpoint,thm.stability_precise}.

\section{Assumptions and main results} 
\label{sec:preliminaries}
Hereafter, we suppose that $\W=(\Omega,\mathcal G,\{\cff_t\}_{t\in [0,T]},\P)$ is a stochastic basis satisfying the usual conditions (in particular \( \Omega \) is complete and the sigma-algebra $\cff_0$ contains the $\P$-null sets).
Throughout the section we fix two real numbers \(m,q\) such that
\[
q \in [0,\infty)
,\quad \quad 
m\ge q\vee1\,.
\]

\subsection{Stochastic controlled processes and vector fields} 
\label{sub:stochastic_controlled_processes}

We introduce some useful classes of stochastic processes \( Z\colon \Omega\times \mathbb T \to \mathcal Y \) with index set
\( \mathbb T\in \{[0,T], \Delta (0,T)\}.\)
We assume tacitly that these processes are $\mathcal G\otimes\Bor(\mathbb T)/\Bor( \mathcal Y)$-measurable. A process
\( Z \) is called \( m\)-integrable if $Z_\tau$ is in $L_m$ for every $\tau\in \mathbb T$ and uniformly \(m \)-integrable if \( \sup_{\tau\in\mathbb T}\|Z_\tau\|_m <\infty.\)

\begin{notation}[Conditional type norms] 
	Let $\beta\ge0$ and fix an extended real number $n \in [ m, \infty]$.
	\begin{itemize}
		\item 
		If \( \mathcal F\subset \mathcal G\) is another \( \sigma\)-algebra and \( Z\colon \Omega\to \mathcal Y\) a random variable, we introduce 
		\[
		\|Z|\mathcal F\|_{m}:= (\mathbb E[|Z|^m_{\mathcal Y}|\mathcal F])^{1/m}\,.
		\]
		
		\item
		For \( \mathbb T=\Delta(0,T) \), we define
		\[
		\|Z\|_{\beta;m,n}\vcentcolon=
		\sup_{(s,t)\in \Delta(0,T)}\frac{\|\|Z_{s,t}|\cff_s \|_m\|_n}{(t-s)^\beta}
		\, \in[0,\infty]
		\]
		and write \( \|Z\|_{\beta;m}\vcentcolon=\|Z\|_{\beta;m,m}\).
		\item For \( \mathbb T=[0,T] \) we define
		\( \|Z\|_{\beta;m,n}\vcentcolon=
		\sup_{t\in [0,T]}\|Z_t\|_{n} + \|\delta Z\|_{\beta;m,n} \) and write \( \|Z\|_{\beta;m}:=\|Z\|_{\beta;m,m} \). 
	\end{itemize}
\end{notation}

We will also make use of the following conventions.
\begin{notation}[Shorthands for conditional expectations]\phantom{For spacing }
	\begin{itemize}
		\item
		Given \( s\in [0,T] \), we let
		\begin{equation*}
			\E_s = \E(\,\cdot\,|{\mathcal F_s})
		\end{equation*}
		\item 
		For a two-parameter family of random variables \( Z_{s,t}(\omega) , s\le t\in [0,T]\), we will denote by $\E_\bigcdot Z$ the map \( (\omega;s,t) \mapsto \E_s (Z_{s,t} )(\omega) \).
	\end{itemize}
\end{notation}

Next, we recall the notion of stochastic controlled processes from \cite{FHL21}.
Let $\bar X$ be a path in $C^\alpha([0,T];V)$ and recall that \( \mathcal Y \) is a generic Banach space (it needs not be separable).
\begin{definition}[Stochastic controlled rough paths]
	\label{def.regSCRP}
	Fix parameters $0\le\beta,\beta'\le1$ and $n\in[m,\infty]$.
	The pair $(Z,Z')$ is called a \textit{stochastic controlled rough path} (with values in \( \mathcal Y \)), on the stochastic basis $\W$,
	and we write
	\[
	(Z,Z')\in\DL{\beta,\beta'}{m}{n}(\mathcal Y)=\DL{\beta,\beta'}{m}{n}([0,T],\W;\mathcal Y)
	\]
	if the following conditions are satisfied
	\begin{enumerate}[(i)]
		\item\label{good1} $Z\colon \Omega\times [0,T]\to \mathcal Y$ and $Z'\colon\Omega \times[0,T]\to \LL(V,\mathcal Y)$
		are progressively $\{\mathcal F_t\}$-measurable in the strong sense;
		\item\label{good2} the quantities $\|\delta Z\|_{\beta;m,n}$, $\sup_{t\in[0,T]}\|Z'_t\|_n$ and $\|\delta Z'\|_{\beta';m,n}$ are finite;
		\item putting $R^Z_{s,t}
		=\delta Z_{s,t}-Z'_s \delta X_{s,t}$ for each $(s,t)\in \Delta$,
		we have that
		$\E_{\bigcdot} R^Z\equiv [(\omega;s,t)\mapsto \mathbb E_s R^Z_{s,t}]$ is such that $\|\EE R^Z\|_{\beta+\beta';n}<\infty$.
	\end{enumerate}
\end{definition}
 For each $(Z,Z')$ and $(\bar Z,\bar Z')$ respectively in $\mathbf{D}^{\beta,\beta'}_{X}L_{m,\infty}(\mathcal Y)$ and $\mathbf{D}^{\beta,\beta'}_{\bar X}L_{m,\infty}(\mathcal Y)$, we define the distance%
\footnote{This is slightly different from the one used in \cite{FHL21}.} 
\begin{align}
	\label{def.scrp.metricd}
	\| Z,Z';\bar Z,\bar Z'\|_{X,\bar X;\beta,\beta';m}
	=\| Z-  \bar Z\|_{\beta;m}
	+\| Z'- \bar Z'\|_{\beta';m}
	+\|\E_\bigcdot R^{Z}-\E_\bigcdot \bar R^{\bar Z}\|_{\beta+\beta';m},
\end{align}
where $R^Z_{s,t}\vcentcolon=\delta Z_{s,t}-Z'_s \delta X_{s,t}$ while $\bar R^{\bar Z}_{s,t}\vcentcolon=\delta\bar Z_{s,t}-\bar Z'_s\delta\bar X_{s,t}$.
Note that the right hand side of \cref{def.scrp.metricd} becomes infinite when $Z_0-\bar Z_0$ is not $L_m$-integrable.
If $X=\bar X$, we simply write $\| Z,Z';\bar Z,\bar Z'\|_{X;\beta,\beta';m}$. 

An additional concept which is introduced in \cite{FHL21} is that of \textit{stochastic controlled vector field}.
As is  seen from properties \ref{scvec.f}-\ref{scvec.remainder} below, these are themselves stochastic controlled processes taking values in the Banach space \( \mathcal Y=(\mathcal B_b(E;\bar W),|\cdot|_{\infty})\).
However, they come with additional regularity properties and so it is necessary to introduce new definitions.
The next one is a generalization of the concept introduced in \cite{FHL21}, as is seen by particularizing to the case \( E=\abx= W\).
\begin{definition}[Stochastic controlled vector fields]\label{def.scvec}
	Let \( E \) be a topological vector space and fix a continuously embedded Banach space \( \abx\subset E .\)
	Fix real numbers $\gamma>1$; $0\le\beta,\beta'\le1$; $m\in[2,\infty)$ and $n\in[m,\infty]$.
	\textit{We call $(f,f')$ a stochastic 
		controlled vector field on \( E \) along \(\abx \)} and write \( (f,f')\in\mathbf D^{\beta,\beta'}_XL_{m,n}\C^\gamma_{b,\abx}(E)\)
	if the following conditions are satisfied.
	\begin{enumerate}[(i)]
		\item\label{scvec.f}
		The pair
		$$
		(f,f'): \Omega \times [0,T]  \to \C^\gamma_{b,\abx}(E, \bar W) \times \C^{\gamma-1}_{b,\abx} (E,\LL(V,\bar W))
		$$
		is  progressively measurable in the strong sense and uniformly $n$-integrable, i.e.
		$$
		\sup_{s\in[0,T]}\||f_s|_{\gamma;\abx\subset E}\|_n +  \sup_{s\in[0,T]}\||f'_s|_{\gamma-1;\abx\subset E}\|_n < \infty\,.
		$$
		\item\label{scvec.brakets} 
		Putting
		\begin{align*}
			\bk{Z}_{\kappa;m,n}=\sup_{(s,t)\in \Delta(0,T)}\frac{\big\|\big\|\sup_{x\in E}|Z_{s,t}(x)|\,\big|\,\cff_s \big\|_m\big\|_n}{(t-s)^\kappa},
		\end{align*}
		the quantities
		$\bk{\delta f}_{\beta;m,n}$, $\bk{\delta f'}_{\beta';m,n}$, \( \bk{\delta Df}_{\beta';m,n} \) are finite. 
		
		This is of course equivalent to saying that \( \|Z\|_{\kappa;m,n} <\infty\) for each \( (Z,\kappa)\in \{(\delta f,\beta),(\delta f',\beta'),(\delta Df,\beta')\} \), where we think of \( Z \) as a stochastic process with values in \( (\mathcal B_b(E,\mathcal Y),|\cdot|_{\infty})\) for \( \mathcal Y\in \{\bar W,\LL(V,\bar W),\LL(\abx,\bar W)\} \).%
		\item\label{scvec.remainder} Putting  $R^f_{s,t}:=f_t-f_s-f'_s \delta X_{s,t}$ for each \( (s,t)\in \Delta \), then we have 
		\[
		\bk{\E_\bigcdot R^f}_{\beta+\beta';n}=\bk{\E_\bigcdot R^f}_{\beta+\beta';m,n}<\infty\,.
		\]
	\end{enumerate}
	For stochastic controlled vector fields \( (f,f')\in\mathbf D^{\beta,\beta'}_XL_{m,n}\C^\gamma_{b,\abx}(E)\), we introduce the semi-norms
	\begin{equation}
		\label{def.norms_scvf}
		\begin{aligned}
			&\|(f,f')\|_{\gamma;n}\vcentcolon=\sup_{s\in[0,T]}\||f_s|_{\gamma;\abx\subset E}\|_n+\sup_{s\in [0,T]}\||f'_s|_{\gamma-1;\abx\subset E}\|_n\,,
			\\&
			\bk{(f,f')}_{X;\beta,\beta';m,n}\vcentcolon=\bk{\delta f}_{\beta;m,n}+\bk{\delta Df}_{\beta';m,n}+\bk{\delta f'}_{\beta';m,n}+\bk{\E_\bigcdot R^f}_{\beta+\beta';m,n}.
		\end{aligned}
	\end{equation}
	We note that the dependence on the subspace $\abx$ is omitted in the above notation. We also abbreviate $\bk{(f,f')}_{X;\beta,\beta';m}$ for $\bk{(f,f')}_{X;\beta,\beta';m,m}$.

	Similarly, if $(f,f'),(\bar f,\bar f')$ are two stochastic controlled vector fields in $\mathbf{D}^{\beta,\beta'}_XL_{m,\infty}\C^\gamma_b$ and $\mathbf{D}^{\beta,\beta'}_{\bar X}L_{m,\infty}\C^\gamma_b$ respectively, we define 
	\begin{equation}
		\label{def.bk_metric}
		\begin{aligned}
			\bk{f,f';\bar f,\bar f'}_{X,\bar X;\beta, \beta';m}&=\bk{\delta f-\delta \bar f}_{\beta;m}+\bk{\delta f'-\delta \bar f'}_{\beta';m}+\bk{\delta Df-\delta D\bar f}_{\beta';m}
			\\&\quad
			+\bk{\E_\bigcdot R^f-\E_\bigcdot \bar R^{\bar f}}_{\beta+\beta';m}
		\end{aligned}
	\end{equation}
	and let $\bk{f,f';\bar f,\bar f'}_{X;\beta, \beta';m}=\bk{f,f';\bar f,\bar f'}_{X, X;\beta, \beta';m}$ accordingly.
\end{definition}


\subsection{McKean--Vlasov equations with rough common noise} 
\label{sec:main_results}

Before we proceed with further definitions and main results, a few comments on our probabilistic setting are in order.
Let $\W'=(\Omega',\mathcal G',\P',\{\mathcal F_t'\})$ be 
another stochastic basis, which is Polish and atomless. 
We assume that there is a measure-preserving map
\[
\begin{aligned}
	\pi\colon (\Omega',\mathcal G',\P')
	\longrightarrow 
	(\Omega,\mathcal G,\P).
\end{aligned}
\]
Consequently, if \( Z \) is a random variable on \( \Omega \), we can define
\begin{equation*}
	\slashed Z(\omega'):= Z(\pi(\omega')),\quad \quad \omega'\in\Omega',
\end{equation*}
and similarly for any events and algebra fields. Because of the measure-preserving property, the distribution of \( \slashed Z\) coincides with that of \( Z\). 
We assume that \( \mathcal G'\subset \slashed{\mathcal G} \) and in addition that \( \mathcal F_t' \supset\slashed{\mathcal F_t}\) for all  \(t\in [0,T]\) (see \cref{sec:measure_theoretic} for further details).

For notational convenience, we will put an apostrophe on other symbols associated to \( \W' \): namely $ L_q'(W)= L_q(\W',W)$, while $\E'$ is the expectation with respect to $\P'$ and \( \|\cdot\|'_q\) is the corresponding \( q \)-th metric as in \eqref{lebesgue_q}.


The following definition is due to Lions in  
case $q=2$ (see e.g. \cite{MR3752669} and references therein) allowing for other integrability exponents will be important to us and forces us to revisit a number of relevant properties that now require a different proof. (See the discussion before and around \eqref{law_inv_appendix}.)

\begin{definition}[Lions lift and slashed notation]\label{def.Lionslift} Recall that \( q\ge0. \)
	For a function \( f\in\mathcal B_b(\cpp_q(W),\aby) \), the Lions lift of $f$ with respect to \( (\Omega',\mathcal G',\P') \) is the function \( \hat f\in \mathcal B_b(L_q(\mathcal G';W),\aby) \) defined by 
		\[
		\hat f(\xi)=f(\law_{\Omega'}\xi),\quad \forall \xi \in L_q(\mathcal G';W).
		\] 
		We adopt a similar notation when \( f \) depends on additional parameters (for instance when \( f \) is a \( \mathcal B_b(\mathcal P_q(W),\aby) \)-valued random variable).
\end{definition}

In the remainder of the section/manuscript we consider measure-dependent coefficients, which are given under the form of Banach space-valued \( \{\mathcal F_t\} \)-progressive measurable stochastic processes (in the strong sense) 
\begin{equation}
	\label{measure-dependent_drift}
	\begin{aligned}
		&b\colon\Omega\times[0,T]\longrightarrow \mathcal B_b(W\times \cpp_q(W),W)\,,
		\\
		&\sigma\colon \Omega\times[0,T]\longrightarrow \mathcal B_b(W\times \cpp_q(W),\LL(\bar V, W))\,,
	\end{aligned}
\end{equation}
and similarly
\begin{equation}
	\label{measure-dependent_rough}
	\begin{aligned}
		&f\colon\Omega\times[0,T]\longrightarrow \mathcal B_b(W\times \cpp_q(W), \LL(V,W))\,,
		\\
		&f'\colon \Omega\times[0,T]\longrightarrow \mathcal B_b(W\times \cpp_q(W),\LL(V\otimes V, W))\,.
	\end{aligned}
\end{equation}
Their Lions lifts are denoted by \(\hat{b}, \hat{\sigma}, \hat{f}, \hat{f}'\) respectively.
We fix an additional real number 
\begin{equation*}
	p \in [1\vee q, m]\,.
\end{equation*}
Suppose that $(y,\xi)\mapsto\hat f_t(\omega,y,\xi)$ is continuously $W\times L'_p(W)$-Fréchet differentiable for every fixed pair \( (\omega,t) \in \Omega\times [0,T]\).
In particular, for each $(\omega,t,y,\xi)\in\Omega\times [0,T]\times  W\times  L'_q(W)$, the Fr\'echet derivative 
\[
\begin{aligned}
	&W\times L_p'(W)\longrightarrow \bar W
	\\
	&(z,\eta) \longmapsto D\hat f_t(\omega,y,\xi)[z,\eta]=D_1\hat f_t(\omega,y,\xi)[z] + D_2\hat f_t(\omega,y,\xi)[\eta]
\end{aligned}
\]
defines a bounded linear operator, where we henceforth write 
\( D_1 \) for the usual Fréchet derivative with respect to \( y\in W \), while \( D_2 \) indicates differentiation with respect to \( \xi \in  L'_q(W)\)  along \( \abx =L'_p(W)\) (see \cref{sec:along}). Note that $D_1\hat f_t(\omega,y,\xi)$ is an element of $\LL(W,\LL(V,W))$ while $D_2\hat f_t(\omega,y, \xi)$ is an element of $\LL(L'_p(W),\LL(V,W))$.

For random variables \(Y,Z\) and each \(\omega\in \Omega'\), the substitution 
\begin{align*}
    (y,\xi,z,\eta)\leftarrow (Y(\omega),\slashed{Y}, Z(\omega),\slashed{Z})
\end{align*} will appear frequently and hence, the notation
\begin{equation}
	\label{D_hat_f_f}
	\begin{aligned}
	\hat f_t(Y,\slashed{Y})(\omega) &:= \hat f_t(\omega,Y(\omega),\slashed{Y})\,,\\
	D\hat f_t(Y,\slashed{Y})[Z](\omega) &		
:=  D_1\hat f_t(\omega,Y(\omega),\slashed{Y})[Z(\omega)]+D_2\hat f_t(\omega,Y(\omega),\slashed{Y})[\slashed{Z}]
	\end{aligned}
\end{equation}
will be used as a shorthand.

\begin{example}
	For the simple case $\hat f(y, \xi)=\E' h(y,\xi)$, where $h$ is a $\C^3_b$-function on $W\times W$, we have $D_1\hat f(y,\xi)=\E' D_1h(y,\xi)$ and $D_2\hat f(y, \xi)[\eta]=\E' [D_2h(y,\xi)\eta]$. The expression $D\hat f(Y,\slashed{Y})[Z]$ with \( Z=\hat f(Y,\slashed{Y}) \) refers to the random variable
	\begin{align*}
		D\hat f(Y,\slashed{Y})[\hat f(Y,\slashed{Y})](\omega)
		&=
		D_1\hat f(Y(\omega),\slashed{Y})[\hat f(Y(\omega),\slashed{Y})]
		+D_2\hat f(Y(\omega),\slashed {Y})[\hat f(Y(\cdot),\slashed {Y})]
		\\&
		=\int_{\Omega'}\int_{\Omega'} D_1h(Y(\omega),\slashed Y(\omega'_1))h(Y(\omega),\slashed Y(\omega'_2))\P'(d \omega'_1)\P'(d \omega'_2)
		\\&\quad 
		+\int_{\Omega'}\int_{\Omega'} D_2h(Y(\omega),\slashed Y(\omega'_1))h(\slashed Y(\omega'_1),\slashed Y(\omega'_2))\P'(d \omega'_1)\P'(d \omega'_2).
	\end{align*}
\end{example}
We now give the definition of a solution to \eqref{eqn.MV}, driven by some rough path $(X,\XX)$.
\begin{definition}[Integrable solutions]
	\label{def.MVsoln}
	Let $m\in[2,\infty)$.
	A $L_{m,\infty}$-integrable solution of \eqref{eqn.MV}  over $[0,T]$ is an $\{\cff_t\}$-adapted continuous process $Y$ such that the following conditions are satisfied: 
	\begin{enumerate}[(i)]
		\item $\int_0^T |\hat b_r(Y_r,\slashed {Y_r})|dr$ and $\int_0^T |(\hat \sigma_r \hat \sigma^\dagger_r)(Y_r, \slashed {Y_r})|dr$ are finite a.s.;
		\item $(\hat f(Y,\slashed Y),D\hat f(Y,\slashed Y)[\hat f(Y,\slashed Y)]+\hat f'(Y,\slashed Y))$ is a stochastic controlled process in 
		$\DL{\bar\alpha,\bar\alpha'}{m}{\infty}([0,T],\W,\LL(V,W))$ 
		for some $\bar \alpha,\bar\alpha'\in[0,1]$, $\alpha+(\alpha\wedge\bar \alpha)>\frac12$, $\alpha+(\alpha\wedge\bar \alpha)+\bar \alpha'>1$; 
		\item $Y$ satisfies the following stochastic Davie-type expansion
		\begin{equation}\label{est.defJMV}
			\|\|J_{s,t}|\cff_s\|_m\|_\infty=o(t-s)^{1/2}
			\tand\|\E_sJ_{s,t}\|_\infty=o(t-s)
		\end{equation}
		for every $(s,t)\in \Delta$, where  
        \begin{equation}\label{def.JY}
            \begin{aligned}
                J_{s,t} &=\delta Y_{s,t}-\int_s^t\hat b_r(Y_r,\slashed {Y_r})dr-\int_s^t \hat \sigma_r(Y_r,\slashed {Y_r})dB_r
                \\&\quad-\hat f_s(Y_s,\slashed {Y_s})\delta X_{s,t}-\left(D\hat f_s(Y_s,\slashed {Y_s})[\hat f_s(Y_s,\slashed {Y_s})]+\hat f'_s(Y_s,\slashed {Y_s})\right)\bxx_{s,t}.
            \end{aligned}
        \end{equation}
	\end{enumerate}
	When the starting position $Y_0=\xi$ is specified, we say that $Y$ is a solution starting from $\xi$ (at time \(t=0\)).
\end{definition}

The regularity conditions on the coefficients of \eqref{eqn.MV} are stated together in the following assumption. Recall that \( m,p,q\) are given non-negative real numbers such that \(m\ge p\ge 1\vee q\).
\begin{assumption}[\( m,p,q \)]
	\label{assume.regularity} 
	\phantom{For spacing}
	
	\noindent	Let  $\gamma \in (\frac1 \alpha,3]$ and suppose that $m\ge 2$.
	Let \( b,\sigma,f,f' \) be progressive measurable, measure-dependent coefficients as in \eqref{measure-dependent_drift}-\eqref{measure-dependent_rough}.
	Suppose that 
	\begin{enumerate}[(a),series=assumption]
		\item\label{item.reg.bs} 
		For each $t\in[0,T]$ and \( \P \)-a.s.\
		the functions $ b_t(\omega,\cdot), \sigma_t(\omega,\cdot)$ belong to the class $\C^1_{b,p,q}=\C^1_{b,W\times L_p'}(W\times  L'_q)$ and satisfy 
		\begin{align}\label{con.gammamholder}
			\sup_{t\in [0,T]}\||\hat b_t|_{\C^1_{b,p,q}} \|_\infty
			+\sup_{t\in [0,T]}\||\hat \sigma_t|_{\C^1_{b,p,q}} \|_\infty<\infty.
		\end{align}
		
		\item The Lions lift $(\hat f,\hat f')$ forms a stochastic controlled vector field on \( E=W\times L_q'\) along \( \mathcal X=W\times L_p' \).
		Moreover we require that
		\[
		(\hat f,\hat f')\in \mathbf{D}^{ \alpha,\alpha}_XL_{m,\infty}\C^\gamma_{b,p,q} 
		\]
		while
		\[
		(D\hat f,D\hat f') \in\mathbf{D}^{ \alpha,\alpha}_XL_{m,\infty}\C^{\gamma-1}_{b,p,q}\,.
		\]
	\end{enumerate}
\end{assumption}
\begin{remark}
	Because of the embedding \(L'_m\hookrightarrow L'_p\), \cref{assume.regularity}-($m,p,q$) implies \cref{assume.regularity}-($m,m,q$)  (see also \eqref{id.cmq} below).
\end{remark}

We have now all in hand to state our first result.

\begin{theorem}\label{thm.MV.wellposed}
	Grant \cref{assume.regularity}-(\( m,m,q \)).
	Then, for every $\xi\in  L_q(\cff_0)$ and $T>0$, equation \eqref{eqn.MV} has a unique $L_{m,\infty}$-integrable solution\footnote{in the sense of Definition \ref{def.MVsoln}}
	 \( Y\colon \Omega\times [0,T]\to W\) starting from $Y_0=\xi$. 
	This solution forms a stochastic controlled rough path in \(\mathbf D_X^{\alpha,\alpha}L_{m,\infty}\) with \( Y'=\hat f(Y,\slashed Y) \).
\end{theorem}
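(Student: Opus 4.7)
My plan is to set up a Picard iteration in the space of stochastic controlled rough paths and reduce each step to a standard rough stochastic differential equation solvable via \cref{thm.fixpoint}. Given a candidate $Z \in \mathbf{D}_X^{\alpha,\alpha} L_{m,\infty}([0,T],\W;W)$ with $Z_0 = \xi$, Gubinelli derivative $Z' = \hat f(Z, \slashed Z)$, and uniform bounds on the relevant semi-norms, freeze the measure argument by setting
\[
\tilde b_t(\omega,y) := \hat b_t(\omega, y, \slashed{Z_t}), \qquad \tilde\sigma_t(\omega,y) := \hat\sigma_t(\omega, y, \slashed{Z_t}), \qquad \tilde f_t(\omega,y) := \hat f_t(\omega, y, \slashed{Z_t}),
\]
and
\[
\tilde f'_t(\omega,y) := \hat f'_t(\omega,y,\slashed{Z_t}) + D_2 \hat f_t(\omega,y,\slashed{Z_t})[\slashed{Z'_t}].
\]
This is the natural Gubinelli derivative one expects by formal Taylor expansion in both the $X$- and measure-directions. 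Define $\Phi(Z) := Y$, where $Y$ is the unique $L_{m,\infty}$-solution of
\[
dY_t = \tilde b_t(Y_t)\,dt + \tilde\sigma_t(Y_t)\,dB_t + (\tilde f_t, \tilde f'_t)(Y_t)\,d\X_t, \qquad Y_0 = \xi,
\]
produced by \cref{thm.fixpoint}. Any fixed point of $\Phi$ then yields an $L_{m,\infty}$-integrable solution of \eqref{eqn.MV} in the sense of \cref{def.MVsoln} with $Y' = \hat f(Y, \slashed Y)$, and conversely.

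The first key step is a composition lemma ensuring that the frozen coefficients lie in the classes required by \cref{thm.fixpoint,thm.stability_precise}: under \cref{assume.regularity}, the pair $(\tilde f, \tilde f')$ belongs to $\mathbf{D}_X^{\alpha,\alpha} L_{m,\infty}\C^{\gamma'}_b$ for a suitable $\gamma' > 1/\alpha$, with norms bounded in terms of $\|(\hat f, \hat f')\|_{\gamma;\infty}$, $\bk{(\hat f, \hat f')}_{X;\alpha,\alpha;m,\infty}$, and the $\mathbf{D}_X^{\alpha,\alpha}L_{m,\infty}$-norm of $Z$; analogous statements hold for $(D\tilde f, D\tilde f')$, and $\tilde b, \tilde\sigma$ are random bounded Lipschitz in the sense of \cite{FHL21}. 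The arguments rely on the mean-value theorem \cref{thm.mvt} applied along $L'_m$, together with the basic identity $\|\slashed{Z_t}\|'_m = \|Z_t\|_m$, which transfers controlled-path increment estimates on $Z$ to increment estimates on the measure argument. These are precisely the composition estimates announced in \cref{sec.preparatory_material}.

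The next step is to show that $\Phi$ is a contraction on a suitable closed ball of stochastic controlled rough paths with initial condition $\xi$, on a short interval $[0, T_0]$. Given $Z, \bar Z$ with images $Y = \Phi(Z), \bar Y = \Phi(\bar Z)$, the stability estimate \cref{thm.stability_precise} bounds the distance between $Y$ and $\bar Y$ by the distances between the frozen coefficients, which in turn are controlled by $\|Z, Z'; \bar Z, \bar Z'\|_{X;\alpha,\alpha;m}$ via the composition estimates, again thanks to the $L'_m$-Lipschitz regularity of the measure arguments of $\hat b, \hat\sigma, \hat f, \hat f'$. The a priori estimate \eqref{est.apri.m} shows that $\Phi$ preserves the ball, and the short-time factor inherent in the sewing-based stability bound produces a strict contraction for $T_0$ small enough depending only on the data norms. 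A unique fixed point on $[0, T_0]$ follows; one then iterates over successive intervals to cover all of $[0, T]$, since the $L_{m,\infty}$-boundedness of the coefficients prevents any blow-up in the iteration.

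The main obstacle is the composition step: showing that $(y,\omega) \mapsto \hat f_t(\omega, y, \slashed{Z_t})$ inherits a genuine stochastic controlled vector field structure with the correct Gubinelli derivative and with quantitative estimates compatible with \cref{thm.fixpoint,thm.stability_precise}. The subtlety is that the measure argument itself evolves as a controlled rough path, so the expansion of $\delta \tilde f_{s,t}$ must combine the $X$-expansion of $\hat f$ (captured by $\hat f'$) with a first-order expansion in the measure argument (captured by $D_2 \hat f$ acting on $\delta\slashed{Z_{s,t}} \approx \slashed{Z'_s}\,\delta X_{s,t}$), modulo a higher-order remainder that must be controlled via $D^2 \hat f$. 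This is exactly where the Fréchet calculus along $L'_m$ developed in \cref{sec:along} is essential: the required higher derivatives of $\hat f$ act only on increments living in $L'_m$, which is precisely the class of moment bounds available for $\delta \slashed Z$, whereas classical Lions differentiability on $L'_2$ would force incompatible regularity assumptions on the coefficients.
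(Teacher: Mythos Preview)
Your proposal is correct and follows essentially the same two-step fixed-point scheme as the paper: freeze the measure argument along a stochastic controlled rough path, solve the resulting RSDE via \cref{thm.fixpoint}, verify the frozen coefficients lie in the right classes via the composition lemmas (\cref{prop.newcvec}, \cref{cor.expo}), and use \cref{thm.stability_precise} to close the contraction. The one point you leave imprecise is the source of the small-time factor: \cref{thm.stability_precise} itself carries no such factor, and the paper instead runs the fixed point in a \emph{weaker} metric $\|\cdot\,;\cdot\|_{X;\beta,\beta';m}$ with $\beta'<\beta<\alpha$ (subject to $\beta>\tfrac13\vee\tfrac1\gamma$, $\beta'<(\gamma-2)\beta$, $2\beta+\beta'>1$), so that the improved $(\alpha,\beta)$-regularity of the output guaranteed by \cref{prop.apri} yields a factor $o_{\bar T}(1)$ upon embedding back into the $(\beta,\beta')$-norms.
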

The next result is a stability estimate which is the measure-dependent analogue of the stability result from \cite{FHL21}. We note that the hypotheses are slightly weakened in comparison with \cref{thm.MV.wellposed}. In particular we stress that \cref{assume.regularity} is not necessary.
\begin{theorem}[Stability]\label{thm.MKV.Stability}
	Fix real numbers \(\gamma\in(\frac{1}{\alpha},3]\); \(m\ge2\); $\beta,\beta'\in(0,\alpha)$ subject to 
	\[
	\alpha+(\gamma-1)\beta>1,
	\quad 1-\alpha-\beta <\beta'\le1
	\]
	and recall that \(q\in[0,\infty)\), \(m\ge p\ge q\vee1\).	
	Let $\xi,\bar \xi$ be in $ L_q(\cff_0)$; 
	$\X, \bar \X$ be in $\CC^\alpha$; \( \sigma,\bar\sigma,b,\bar b ,(f,f'),(\bar f,\bar f')\) be measure-dependent coefficients as in \eqref{measure-dependent_drift}-\eqref{measure-dependent_rough} such that 
	\begin{enumerate}[(a)]
		\item  $\xi-\bar \xi\in L_p$,
		\item \(b,\sigma\) satisfy part \ref{item.reg.bs} of  \cref{assume.regularity}-$(m,p,q)$,
		\item \((\hat f,\hat f')\in \mathbf D_{X}^{\beta,\beta}L_{m,\infty}\mathcal C^{\gamma}_{b,p,q}\) and \((D\hat f,D\hat f')\in \mathbf D_{X}^{\beta,\beta'}L_{m,\infty}\mathcal C^{\gamma-1}_{b,p,q}\),
		\item \(\sup_{t\in [0,T]}\||\hat b_t|_{\infty} \|_\infty
			+\sup_{t\in [0,T]}\||\hat \sigma_t|_{\infty} \|_\infty\) is finite,
		\item \((\hat {\bar f},\hat {\bar f}')\in\mathbf{D}^{\beta,\beta'}_{\bar X}L_{m,\infty}\C^{\gamma-1}_{b,p,q}\).
	\end{enumerate}
	Let $Y$ be a $L_{m,\infty}$-integrable solution to \eqref{eqn.MV} starting from $\xi$, and similarly denote by
	$\bar Y$ a $L_{m,\infty}$-integrable solution to \eqref{eqn.MV} starting from $\bar \xi$ with associated coefficients $(\bar b,\bar \sigma,\bar f,\bar f',\bar \X)$. Suppose in addition that $M<\infty$ exists such that 
	\begin{multline}
		\label{M_stability}
		|\X|_\alpha+|\bar\X|_\alpha+\sup_{t\in[0,T]}(\||\hat b_t|_{\mathcal C^{1}_{b,p,q}}\|_\infty+\||\hat \sigma_t|_{\mathcal C^1_{b,p,q}}\|_\infty+\||\hat f_t|_{\mathcal C^{\gamma}_{b,p,q}}\|_\infty+\||\hat f'_t|_{\mathcal C^{\gamma-1}_{b,p,q}}\|_\infty)
		\\
		+\bk{(\hat f,\hat f')}_{X; \beta,\beta;m,\infty}+\bk{(D\hat f,D\hat f')}_{X;\beta,\beta';m,\infty}\le M.
	\end{multline}
	
	Then, denoting by \( R^Y_{s,t}=\delta Y_{s,t}-f(Y_s)\delta X_{s,t}\) and \(\bar R^{\bar Y}_{s,t}=\delta\bar Y_{s,t}-\bar f(\bar Y_s)\delta\bar X_{s,t} \),
	we have the estimate 
	\begin{multline}
		\label{est.mkv.stability}
			\|\sup_{t\in[0,T]}|\delta Y_{0,t}-\delta \bar Y_{0,t}|\|_m+\| Y- \bar Y\|_{\alpha;m}+\|\delta f^Y(Y)-\delta\bar f^{\bar Y}(\bar Y)\|_{\beta;m} + \|\E_{\bigcdot} R^Y-\E_{\bigcdot}\bar R^{\bar Y}\|_{\alpha+ \beta;m}
			\\
			\lesssim\|\xi-\bar \xi\|_p+\rho_{\alpha,\beta}(\X,\bar \X)
			+\sup_{t\in[0,T]}\||\hat \sigma_t-\hat{\bar \sigma}_t|_\infty\|_m
			+\sup_{t\in[0,T]}\||\hat b_t-\hat{\bar b}_t|_\infty\|_m
			\\+\|(\hat f-\hat{\bar f},\hat f'-\hat{\bar f}')\|_{\gamma-1;m}+\bk{\hat f,\hat f';\hat{\bar f},\hat {\bar f}'}_{X,\bar X;\beta,\beta';m}\,,
	\end{multline}
	where the implied constant depends only upon \(\alpha,\beta,\beta',\gamma,T,M\) and where we recall notations \eqref{def.norms_scvf}, \eqref{def.bk_metric} and \eqref{def.rho_metric}.
\end{theorem}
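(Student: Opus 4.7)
The strategy is to reduce the McKean--Vlasov stability estimate to its rough-stochastic counterpart, \cref{thm.stability_precise}, by \emph{freezing} the measure arguments along the two given solutions $Y, \bar Y$. Set
\begin{align*}
\tilde b_t(\omega, y) &:= \hat b_t(\omega, y, \slashed{Y_t}), \qquad \tilde \sigma_t(\omega, y) := \hat \sigma_t(\omega, y, \slashed{Y_t}), \\
\tilde f_t(\omega, y) &:= \hat f_t(\omega, y, \slashed{Y_t}), \qquad \tilde f'_t(\omega, y) := D_1 \hat f_t(\omega, y, \slashed{Y_t})[\tilde f_t(\omega, y)] + \hat f'_t(\omega, y, \slashed{Y_t}),
\end{align*}
and define $\bar{\tilde b}, \bar{\tilde \sigma}, \bar{\tilde f}, \bar{\tilde f}'$ by the same recipe, replacing the data by $(\hat{\bar b}, \hat{\bar \sigma}, \hat{\bar f}, \hat{\bar f}')$ and $\slashed{Y_t}$ by $\slashed{\bar Y_t}$. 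By \cref{def.MVsoln}, $Y$ is then an $L_{m,\infty}$-integrable solution to the RSDE \eqref{eq:RSDE} driven by $(\X, B, \tilde b, \tilde \sigma, \tilde f, \tilde f')$, and likewise for $\bar Y$ with the barred frozen data.

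The next step is to establish, via the composition lemmas announced in \cref{sec.preparatory_material}, that $(\tilde f, \tilde f')$ and $(\bar{\tilde f}, \bar{\tilde f}')$ are stochastic controlled vector fields on $W$ belonging respectively to $\mathbf D^{\beta, \beta}_X L_{m, \infty} \C^\gamma_b$ and $\mathbf D^{\beta, \beta'}_{\bar X} L_{m, \infty} \C^{\gamma-1}_b$, with semi-norms quantitatively controlled by $M$ and by the quantities $\|\delta Y\|_{\alpha; m, \infty}, \|\delta \bar Y\|_{\alpha; m, \infty}$. The latter are themselves bounded in terms of $M$ alone via the a priori estimate \cref{prop.apri}.

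Armed with these controls, apply \cref{thm.stability_precise} to the pair of frozen RSDEs. Each coefficient-difference on its right-hand side is then split via the triangle inequality, illustrated for $\sigma$ by
\begin{align*}
\tilde \sigma_t(\cdot, y) - \bar{\tilde \sigma}_t(\cdot, y) = \bigl[\hat \sigma_t(\cdot, y, \slashed{Y_t}) - \hat{\bar \sigma}_t(\cdot, y, \slashed{Y_t})\bigr] + \bigl[\hat{\bar \sigma}_t(\cdot, y, \slashed{Y_t}) - \hat{\bar \sigma}_t(\cdot, y, \slashed{\bar Y_t})\bigr].
\end{align*}
The first bracket is bounded by $\sup_x |\hat \sigma_t - \hat{\bar \sigma}_t|$, which appears directly in \eqref{est.mkv.stability}. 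The second bracket is controlled using \cref{thm.mvt} along the sub-Banach space $L'_p \subset L'_q$ together with the hypothesis $|\hat{\bar \sigma}_t|_{\C^1_{b, p, q}} \le M$, yielding the bound $M \|Y_t - \bar Y_t\|_p$. Analogous splittings handle $b$, the controlled-path bracket \eqref{def.bk_metric} for $(f, f')$, and all first derivatives; the measure-coupling remainders are each bounded by $\|Y_t - \bar Y_t\|_p$ and its H\"older increments in an $L_p$-sense, again via the composition lemmas applied to $(D\hat{\bar f}, D\hat{\bar f}')$.

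Collecting terms yields an inequality of the form
\begin{align*}
G_T \lesssim H + \sup_{t \in [0, T]} \|Y_t - \bar Y_t\|_p,
\end{align*}
where $G_T$ denotes the left-hand side of \eqref{est.mkv.stability} on $[0, T]$ and $H$ the right-hand side thereof. Since $p \le m$, one has $\|Y_t - \bar Y_t\|_p \le \|\xi - \bar \xi\|_p + t^\alpha \|\delta Y - \delta \bar Y\|_{\alpha; m}$, so the self-referential term is absorbed on sufficiently short intervals; a finite-step patching argument in the spirit of \cite{MR1108185} then delivers the estimate on the full $[0, T]$. The principal obstacle is the composition step: verifying that the frozen $(\tilde f, \tilde f')$ inherits the full stochastic controlled vector field structure with sharp quantitative control of all semi-norms in \eqref{def.norms_scvf}, which rests essentially on the Fr\'echet regularity along $L'_p$ encoded by \cref{def.Lionslift} and on the fact that the remainder $R^{\tilde f}$ must be reconstructed from $R^{\hat f}$, from increments of $Y$ itself, and from a first-order Taylor expansion involving $D_2 \hat f$.
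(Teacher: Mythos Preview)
Your overall strategy coincides with the paper's: freeze the measure argument along the given solutions to obtain two RSDEs, apply \cref{thm.stability_precise}, control each coefficient difference of the frozen RSDEs via the composition lemmas, and close by absorption on short intervals. Two points, however, need correction.

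First, your formula for $\tilde f'$ is wrong. You write $\tilde f'_t(y) = D_1 \hat f_t(y,\slashed{Y_t})[\tilde f_t(y)] + \hat f'_t(y,\slashed{Y_t})$, but this is the second-level coefficient in the Davie expansion (the object multiplied by $\bxx_{s,t}$ in \eqref{def.JY}), not the Gubinelli derivative of the vector field $t\mapsto \tilde f_t$. The correct choice, cf.\ \eqref{def.feta}, is
\[
(f^Y_t)'(y) = D_2\hat f_t(y,\slashed{Y_t})[\slashed{Y'_t}] + \hat f'_t(y,\slashed{Y_t}),\qquad Y'=\hat f(Y,\slashed Y),
\]
because the time-dependence of $\tilde f_t$ through $\slashed{Y_t}$ must be differentiated along $L'_p$. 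With your $\tilde f'$ the remainder $R^{\tilde f}$ would not enjoy the required $(\beta+\beta')$-regularity and \cref{thm.stability_precise} could not be invoked. Your later remark that $R^{\tilde f}$ is built from $D_2\hat f$ suggests you have the right picture; the formula for $\tilde f'$ is simply misstated.

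Second, the feedback inequality $G_T \lesssim H + \sup_t \|Y_t-\bar Y_t\|_p$ is too optimistic. The composition estimates (\cref{cor.expo}) actually produce the full controlled-path distance $\|Y,Y';\bar Y,\bar Y'\|_{X,\bar X;\beta,\beta';p}$ on the right, not just the uniform $L_p$-norm; see \eqref{fifth_bound} and \eqref{pre_gronwall}. The paper closes by first reducing (without loss of generality) to $\beta'<\beta<\alpha$, so that together with $p\le m$ this feedback norm is strictly weaker than the left-hand side $\|Y,Y';\bar Y,\bar Y'\|_{X,\bar X;\alpha,\beta;m}$; absorption on short intervals then works against this gap in the H\"older exponents, not via the single $t^\alpha$-factor you invoke for $\sup_t\|Y_t-\bar Y_t\|_p$.
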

We discuss some connections between our results with previous works. 
\begin{example}[Relation with \cite{MR4073682}]\label{ex.reg2}
	We compare \cref{assume.regularity} with the one in \cite{MR4073682}.
	Let $f:W\times\cpp_2(W)\to\LL(V,W)$ satisfy the Regularity Assumptions 1 and 2 of \cite{MR4073682} and take \( \gamma=3 \).
	Then $(f,0)$ satisfies \cref{assume.regularity}-(\( m,p,q \)) with \( q=2 \) and any real numbers \( m\ge p\ge4 \).
	Indeed, let $\partial_\mu$ denote the Lions derivative (see \cite[Chapter 5]{MR3752669}). Then there are measurable functions
	\begin{align*}
		&W\times\cpp_2(W)\times W\ni (x,\nu,v)\mapsto \partial_\mu f(x,\nu)(v)
		\\\shortintertext{and}
		&W\times\cpp_2(W)\times W\times W\ni(x,\nu,v,\bar v)\mapsto \partial^2_\mu f(x,\nu)(v,\bar v)
	\end{align*}
	such that		
	\begin{align*}
		D_2\hat f(x,\xi)[\eta]=\E'\left[ \partial_\mu f(x,\law(\xi))(\xi)\cdot \eta\right]
		, \quad
		D_1D_2\hat f(x,\xi)[\eta]=\E'\left[\partial_x \partial_\mu f(x,\law(\xi))(\xi)\cdot \eta\right]
	\end{align*}
	and
	\begin{multline}\label{id.D2f}
		D^2_2\hat f(x,\xi)[\eta_1,\eta_2]=\int_{\Omega'}\partial_v \partial_\mu f(x,\law(\xi))(\xi(\omega'))\cdot \eta_2(\omega')\otimes \eta_1(\omega')\P'(d \omega')
		\\+\int_{\Omega'}\int_{\Omega'}\partial^2_\mu f(x,\law(\xi))(\xi(\omega'_2), \xi(\omega'_1))\cdot \eta_2(\omega'_2)\otimes  \eta_1(\omega'_1)\P'(d \omega'_1)\P'(d \omega'_2)
	\end{multline}
	for every $\xi, \eta \in L'_2$ and $\eta_1, \eta_2 \in L'_2$, as follows from
{\cite[Proposition 5.85]{MR3752669}}. Thanks to Regularity Assumption 2 of
{\cite{MR4073682}} we see that the maps
\[ (x, \xi) \mapsto \partial^2_x f (x, \law (\xi)), \partial_x \partial_{\mu}
   f (x, \law (\xi)) (\xi), \partial_v \partial_{\mu} f (x, \law (\xi)) (\xi),
   \partial^2_{\mu} f (x, \law (\xi)) (\xi, \tilde{\xi}) \]
are bounded and Lipschitz from $W \times L'_2$ to $L'_2$ (here $\tilde{\xi}$
is an independent copy of $\xi$). Using these regularity conditions,
H{\"o}lder inequality and the above identities, it is straightforward to show
(cf. Example \ref{ex.lip}) that $\hat{f}$ belongs to $\C^3_{b, W \times L_p'} (W \times
L'_2)$ for any $p \ge 4$, verifying \cref{assume.regularity}-(b).
	%
	%
	%
\end{example}

\textbf{Connection with RSDEs.} 
For each stochastic controlled rough path $(\eta,\eta')$ defined on the basis $\W$  with $\eta_t(\omega)\in W$ and $\eta'_t(\omega)\in\LL(V,W)$, we define
\footnote{Note that for each $(\omega,t,y)$, $(f^\eta_t)'(\omega,y)$ is an element in $\LL(V,\LL(V,W))$ through the relation 
	\((f^\eta_t)'(\omega,y)[x]=D_2\hat f_t(\omega , y,\slashed\eta_t)[\slashed\eta'_tx]+\hat f_t(\omega,y,\slashed\eta_t)[x]\in\LL(V,W)\) for every \( x\in V. \)}
\begin{equation}
	\label{def.feta}
	\begin{aligned}
		&f^\eta_t(\omega,y)=\hat f_t(\omega,y,\slashed\eta_t),
		\\
		&(f^\eta_t)'(\omega,y)=D_2\hat f_t(\omega,y,\slashed{\eta}_t)[\slashed\eta'_t]+\hat f'_t(\omega,y,\slashed\eta_t)\,.
	\end{aligned}
\end{equation}
We now choose $(\eta,\eta')=(Y,\hat f(Y,\slashed Y))$. In such case, it follows that
\begin{align*}
	Df^\eta_s(Y_s)f^\eta_s(Y_s)+ (f^\eta_s)'(Y_s)
	=D\hat f_s(Y_s,\slashed {Y_s})[\hat f_s(Y_s,\slashed {Y_s})]+\hat f'_s(Y_s,\slashed {Y_s})\,,
\end{align*}
where the first term in the right hand side is understood in the sense of \eqref{D_hat_f_f}.
Therefore, the relation \eqref{def.JY} becomes
\begin{equation}\label{def.JYeta}
    \begin{aligned}
        \delta Y_{s,t}
        &=\int_s^t b^\eta_r(Y_r)dr+\int_s^t \sigma^\eta_r(Y_r)dB_r
    \\&\quad+f^\eta_s(Y_s)\delta X_{s,t}+\left(Df^\eta_s(Y_s)f^\eta_s(Y_s)+ (f^\eta_s)'(Y_s)\right)\XX_{s,t}
    +J_{s,t}\,.
    \end{aligned}
\end{equation}
Properties (i-iii) of \cref{def.MVsoln} can be rewritten as 
\begin{enumerate}[(i)]
    \item $\int_0^T | b^\eta_r(Y_r)|dr$ and $\int_0^T |( \sigma^\eta_r  (\sigma^\eta)^\dagger_r)(Y_r)|dr$ are finite a.s.;
    \item $( f^\eta(Y),D f^\eta(Y)[ f^\eta(Y)]+ (f^\eta)'(Y))$ belongs to  
    $\DL{\bar\alpha,\bar\alpha'}{m}{\infty}([0,T],\W,\LL(V,W))$ 
    for some $\bar \alpha,\bar\alpha'\in[0,1]$, $\alpha+(\alpha\wedge\bar \alpha)>\frac12$, $\alpha+(\alpha\wedge\bar \alpha)+\bar \alpha'>1$; 
    \item $Y$ satisfies the stochastic Davie-type expansion \eqref{est.defJMV}
    for every $(s,t)\in \Delta$, where \(J\) is defined in \eqref{def.JY}, equivalently in \eqref{def.JYeta}.
\end{enumerate}
These properties precisely mean that \(Y\) is a \(L_{m,\infty}\)-integrable solution to a rough stochastic differential equation (cf. \cite[Definition 4.2]{FHL21}) with vector fields \((b^\eta,\sigma^\eta,f^\eta,(f^\eta)')\) where $(\eta,\eta')=(Y,\hat f(Y,\slashed Y))$, more precisely, $Y$ satisfies the following rough stochastic differential system
\begin{equation}\label{eqn.MVrsde}
	\left\{
	\begin{aligned}
		&dY_t=b^\eta_t(Y_t)dt+\sigma^\eta_t(Y_t)dB_t+(f_t^\eta,(f^\eta_t)')(Y_t)d\X_t\,,
		\\&  (\eta,\eta')=(Y,\hat f(Y,\slashed Y))\,.
	\end{aligned}
	\right.
\end{equation}
The converse  argument is evident. We thus have shown the following
\begin{proposition}
    A process $Y$ is a \(L_{m,\infty}\)- integrable solution to the rough McKean--Vlasov equation \eqref{eqn.MV} if and only if it is  a \(L_{m,\infty}\)-integrable solution to the rough stochastic differential system \eqref{eqn.MVrsde}.
\end{proposition}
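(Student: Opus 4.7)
The plan is to observe that the proposition is essentially a tautological restatement of \cref{def.MVsoln} once one unpacks the definitions in \eqref{def.feta} under the specific choice $(\eta,\eta')=(Y,\hat f(Y,\slashed Y))$. I would therefore proceed by (a) identifying the Gubinelli-type derivative in the two formulations term by term and (b) checking that the three clauses of \cref{def.MVsoln} are pointwise the same conditions as those defining an $L_{m,\infty}$-solution of the RSDE \eqref{eqn.MVrsde} in the sense of \cite[Definition 4.2]{FHL21}.

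The central calculation is the identity
\[
Df^\eta_s(Y_s)f^\eta_s(Y_s)+(f^\eta_s)'(Y_s)
=D\hat f_s(Y_s,\slashed{Y_s})[\hat f_s(Y_s,\slashed{Y_s})]+\hat f'_s(Y_s,\slashed{Y_s}).
\]
Indeed, with $(\eta,\eta')=(Y,\hat f(Y,\slashed Y))$, the defining formula \eqref{def.feta} yields
$f^\eta_t(\omega,y)=\hat f_t(\omega,y,\slashed Y_t)$, so differentiation in the $y$-variable gives
$Df^\eta_t(\omega,y)=D_1\hat f_t(\omega,y,\slashed Y_t)$ (since $\slashed Y_t$ is held fixed), and consequently
$Df^\eta_s(Y_s)f^\eta_s(Y_s)=D_1\hat f_s(Y_s,\slashed{Y_s})[\hat f_s(Y_s,\slashed{Y_s})]$. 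On the other hand, \eqref{def.feta} read at $(\eta,\eta')=(Y,\hat f(Y,\slashed Y))$ gives
$(f^\eta_s)'(Y_s)=D_2\hat f_s(Y_s,\slashed{Y_s})\bigl[\slashed{\hat f_s(Y_s,\slashed Y_s)}\bigr]+\hat f'_s(Y_s,\slashed{Y_s})$. Adding the two displays and invoking the shorthand \eqref{D_hat_f_f} produces exactly the claimed identity.

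With this identity in hand, the translation of the three conditions is immediate. Condition (i) of \cref{def.MVsoln} coincides with its RSDE counterpart because $b^\eta_r(Y_r)=\hat b_r(Y_r,\slashed Y_r)$ and $\sigma^\eta_r(Y_r)=\hat\sigma_r(Y_r,\slashed Y_r)$ by construction. Condition (ii) is about the pair $(\hat f(Y,\slashed Y),D\hat f(Y,\slashed Y)[\hat f(Y,\slashed Y)]+\hat f'(Y,\slashed Y))$ being a stochastic controlled rough path, which by the identity above is the very same pair as $(f^\eta(Y),Df^\eta(Y)f^\eta(Y)+(f^\eta)'(Y))$ required by \cite[Definition 4.2]{FHL21}. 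Finally, the Davie remainder $J_{s,t}$ in \eqref{def.JY} literally rewrites as the Davie remainder in \eqref{def.JYeta} thanks to the same identity, so condition (iii) is verbatim identical.

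I expect no genuine obstacle here: the whole content of the proposition is a bookkeeping matter, namely the compatibility of the ``freezing'' prescription \eqref{def.feta} with the Lions-lift shorthand \eqref{D_hat_f_f}. The only subtlety worth spelling out is that differentiation in the $\xi$-slot of $\hat f$ along $L'_p$ does \emph{not} feature in $Df^\eta$ because the measure argument is frozen when forming $f^\eta$; that contribution resurfaces precisely in the Gubinelli derivative $(f^\eta)'$ via the term $D_2\hat f[\slashed{\eta'}]$, which is why the two formulations agree exactly rather than up to a correction.
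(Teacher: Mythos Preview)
Your proposal is correct and follows essentially the same approach as the paper: the paper's proof (given inline in the text immediately preceding the proposition) consists of precisely the identity $Df^\eta_s(Y_s)f^\eta_s(Y_s)+(f^\eta_s)'(Y_s)=D\hat f_s(Y_s,\slashed{Y_s})[\hat f_s(Y_s,\slashed{Y_s})]+\hat f'_s(Y_s,\slashed{Y_s})$ followed by the observation that conditions (i)--(iii) of \cref{def.MVsoln} rewrite verbatim as the defining conditions of an $L_{m,\infty}$-integrable solution to the RSDE \eqref{eqn.MVrsde} in the sense of \cite[Definition~4.2]{FHL21}. Your explanation of why $Df^\eta$ only picks up $D_1\hat f$ (with the $D_2\hat f$ contribution appearing instead in $(f^\eta)'$) is a useful elaboration of a point the paper leaves implicit.
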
  

\begin{remark}\label{rmk.2ndway}
	The previous discussion indicates that \eqref{eqn.MV} is equivalent to the rough stochastic differential equation
	\begin{align*}
		dY_t=\hat b_t(Y_t,\slashed Y_t)dt+\hat \sigma_t(Y_t,\slashed Y_t)dB_t+(\hat f_t,\hat f'_t)(Y_t,\slashed Y_t)d\X_t,
	\end{align*}
    with vector fields \(\hat b,\hat \sigma,\hat f,\hat f'\) which are defined on \(\Omega\times\R_+\times W\times L'_q(W)\).
	From this perspective, the methods from \cite{FHL21} could be employed in more direct ways to treat \eqref{eqn.MV}. This approach avoids fixing the law components in the coefficients as in \eqref{eqn.MVrsde}. Nevertheless, well-posedness for McKean--Vlasov equations is classically obtained through fixing the law components (see \cite{MR1108185}) and we have followed this classical approach in the current article.
\end{remark}

\subsection{From rough to stochastic common noise} \label{sec:FR2SCN}

In the literature of McKean--Vlasov equations with common noise 
the latter is typically modeled by some
Brownian motion $W$, independent of the idiosyncratic noise that affects
individual particles, modeled by i.i.d. realizations of some Brownian noise
$B$. In the mean-field limit generic dynamics are then of the form
\begin{equation}
  \label{eqn.MV_brownian} \left\{ \begin{aligned}
    & dY_t (\omega) = b_t (Y_t (\omega), \mu_t) \hspace{0.17em} dt + \sigma_t
    (Y_t (\omega), \mu_t) \hspace{0.17em} dB_t (\omega) + f_t (Y_t (\omega),
    \mu_t) \hspace{0.17em} dW_t (\omega),\\
    & \mu_t = \mathrm{Law} (Y_t \mid W), \quad Y_0 (\omega) = \xi (\omega) .
  \end{aligned} \right.
\end{equation}

To relate this to the randomization of McKean--Vlasov SDEs with rough common
noise $\mathbf{X}$, suitably randomized, independent of $B$. To this end, let
$\tmmathbf{\Omega}' = (\Omega', \mathfrak{G}', (\mathfrak{F}'_t)_t,
\mathbb{P}')$ be a complete probability space,\footnote{In slightly abusive notation, $\tmmathbf{\Omega}'$ is different from the auxiliary space employed in Section \ref{sec:main_results}.} 
 and similar for
$\tmmathbf{\Omega}''$. Write
\[ \tmmathbf{\Omega}= (\Omega, \mathfrak{F}, (\mathfrak{F}_t)_t, \mathbb{P})
\]
for the (completed) product space, where $\Omega = \Omega' \times \Omega''$,
$\mathbb{P}=\mathbb{P}' \otimes \mathbb{P}''$ and $\mathfrak{F}_t =
(\mathfrak{F}'_t \otimes \mathfrak{F}''_t) \vee \mathfrak{N}$ where
$\mathfrak{N}$ denotes the $\mathbb{P}$-negligible sets on $\Omega$. Write
$\omega = (\omega', \omega'')$ accordingly. Consider now the unique solution
$Y^{\X} (\omega')$, constructed on the 
stochastic basis
$\tmmathbf{\Omega}'$, from  Theorem~\ref{thm.MV.wellposed}.
\begin{equation}
  \label{eqn.MVX} \left\{ \begin{aligned}
    & dY^{\X}_t (\omega') = b_t (Y^{\X}_t (\omega'),
    \mu^{\X}_t ; \X) \hspace{0.17em} dt + \sigma_t
    (Y^{\X}_t (\omega'), \mu^{\X}_t ; \X)
    \hspace{0.17em} dB_t (\omega')\\
    & \quad\quad\quad\quad \quad + (f_t, f'_t) (Y^{\X}_t (\omega'),
    \mu^{\X}_t ; \X) \hspace{0.17em} d \X_t,\\
    & \mu^{\X}_t = \mathrm{Law} (Y^{\X}_t), \quad
    Y^{\tmmathbf{X}}_0 (\omega') = \xi (\omega')
  \end{aligned} \right.
\end{equation}

{\assumption{\label{ass:4} Let $(\mathscr{C}_T,\rho)$ be a  Polish metric space with Borel $\sigma$-field $\mathfrak{C}_T$ such that  $(\mathscr{C}_T,
\rho) \hookrightarrow (\mathscr{C}_T^{\alpha}, \rho_{\alpha})$, $\alpha \in (1
/ 3, 1 / 2)$. Let 
\[ \bm{W} : (\Omega'', \mathfrak{F}_T'', \mathbb{P}'') \rightarrow
   (\mathscr{C}_T, \mathfrak{C}_T), \]
be adapted in the sense that $\bm{W}_t \in \mathfrak{F}''_t$. (In other
words, $\mathfrak{F}^{\bm{W}}_t \subset \mathfrak{F}_t''$.)}}

\medskip
Both $Y^\X$ and $\bm{W}$ can be ``lifted'' to $\Omega$, by setting $Y^{\X} (\omega)
\assign Y^{\X} (\omega')$ and $\bm{W} (\omega) \assign \bm{W}
(\omega'')$, respectively, where they are, by construction, independent under
$\mathbb{P}$. We also lift $\mathfrak{F}'_t$ (and similarly
$\mathfrak{F}''_t$) in the natural way, in the sense that
\[ A' \in \mathfrak{F}'_t \rightsquigarrow A' \times \Omega'' \in
   \mathfrak{F}'_t \otimes \mathfrak{F}''_0 . \]
We know from \cite{FLZ25} that under the appropriate conditions,
a rough It{\^o} process $\{Y^{\X} : \mathbf{Y} \in \mathscr{C}_T
\}$ over $\tmmathbf{\Omega}'$ admits a $\mathfrak{C}_T$-optional (resp.
$\mathfrak{C}_T$-progressive) version, as before coefficient fields $\{A,
\Sigma, (F, F')\}$ denote its coefficient fields.

\begin{proposition}
  Under the assumptions of  Theorem~\ref{thm.MV.wellposed}, with
  $\mathfrak{C}_T$-progressive coefficient fields, the family
  $\{Y^{\X} : \X \in \mathscr{C}_T \}$ has a
  $\mathfrak{C}_T$-progressive version.
\end{proposition}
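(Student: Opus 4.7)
The strategy is to reduce the statement to the corresponding $\mathfrak{C}_T$-progressive version result for rough stochastic differential equations (RSDEs) established in \cite{FLZ25}, by exploiting the equivalent RSDE formulation \eqref{eqn.MVrsde} of \eqref{eqn.MV}. Once $\X$ is fixed, the McKean--Vlasov solution $Y^{\X}$ solves an RSDE whose coefficients depend on $\X$ both directly and through the flow $\mu^{\X}_\cdot=\Law(Y^{\X}_\cdot)$; so if these ``fixed-measure'' coefficients are themselves $\mathfrak{C}_T$-progressive, the cited result applies and a uniqueness argument closes the proof.

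First I would invoke \cref{thm.MKV.Stability} to show that $\X\mapsto Y^{\X}$ is continuous from $(\mathscr{C}_T,\rho)$ (through the embedding into $(\mathscr{C}^\alpha_T,\rho_\alpha)$) into $L_m(\Omega';\C([0,T];W))$, uniformly on bounded subsets of $\mathscr{C}_T$, since under Assumption~\ref{ass:4} the quantities entering the constant $M$ in \eqref{M_stability} remain uniformly bounded on such sets. Because $m\ge q\vee 1$, this yields continuity of the induced law map $\X\mapsto \mu^{\X}_\cdot$ from $\mathscr{C}_T$ to $\C([0,T];\cpp_q(W))$ endowed with the Wasserstein-$q$ distance.

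Second, I would introduce the fixed-measure coefficient fields
\[
\tilde b_t(\omega',y;\X)\assign b_t(\omega',y,\mu^{\X}_t),\quad \tilde\sigma_t(\omega',y;\X)\assign \sigma_t(\omega',y,\mu^{\X}_t),
\]
and analogously $(\tilde f_t,\tilde f'_t)(\omega',y;\X)$. The continuity of $\X\mapsto\mu^{\X}_\cdot$ obtained above, combined with the Lipschitz dependence on the measure slot (\cref{assume.regularity}\ref{item.reg.bs} and Assumption~\ref{ass:4}) and the $\mathfrak{C}_T$-progressivity of the original fields, yields that $\tilde b,\tilde\sigma,(\tilde f,\tilde f')$ are $\mathfrak{C}_T$-progressive coefficient fields. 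The regularity and stochastic controlled structure required to apply the RSDE well-posedness and measurable-version theory transfer verbatim: substitution of a $\omega'$-deterministic measure into the measure slot leaves all semi-norms in \eqref{def.norms_scvf} unchanged, and both Chen's identity \eqref{chen} and the remainder structure of $(\tilde f,\tilde f')$ are inherited from those of $(\hat f,\hat f')$ via the composition lemmas of \cref{sec.preparatory_material}. Applying the result of \cite{FLZ25} to $(\tilde b,\tilde\sigma,\tilde f,\tilde f';\X)$ produces a $\mathfrak{C}_T$-progressive process $\widetilde Y^{\X}$ which, for each fixed $\X$, is an $L_{m,\infty}$-integrable solution to the RSDE \eqref{eqn.MVrsde} with $\eta=\widetilde Y^{\X}$. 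Uniqueness from \cref{thm.MV.wellposed} then forces $\widetilde Y^{\X}=Y^{\X}$ $\mathbb{P}'$-a.s.\ for every $\X$, delivering the required $\mathfrak{C}_T$-progressive version.

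The main obstacle lies in the second step: verifying that the fixed-measure coefficients inherit not only $\mathfrak{C}_T$-progressivity but also the quantitative regularity bounds of \cref{thm.MV.wellposed}, uniformly on bounded subsets of $\mathscr{C}_T$, in a way compatible with the measurability framework of \cite{FLZ25}. Progressivity itself is a consequence of the continuity of $\X\mapsto \mu^{\X}$ together with the progressivity of the original fields; the more delicate point is propagating the stochastic controlled rough path estimates on $(\hat f,\hat f'),(D\hat f,D\hat f')$ through the substitution $\mu\leftarrow\mu^{\X}_t$ while retaining joint $\mathfrak{C}_T$-measurability. Once this is carried out, the closing uniqueness argument is routine.
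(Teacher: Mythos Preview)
Your approach is correct in outline but differs from the paper's. The paper propagates $\mathfrak{C}_T$-progressivity \emph{through the Picard iteration} used in the proof of \cref{thm.MV.wellposed}: at each step one solves an RSDE with $\mathfrak{C}_T$-progressive coefficients (the frozen law coming from the previous iterate), invokes the version result of \cite{FLZ25} for RSDEs, and observes that the contractive limit (and subsequent patching to a global solution) inherits the measurability. In contrast, you bypass the iteration entirely: you first use the stability estimate \eqref{est.mkv.stability} to obtain continuity of the deterministic map $\X\mapsto\mu^{\X}_\cdot$, feed this into the frozen-law coefficients once, and then apply the RSDE version result a single time followed by uniqueness. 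Your route is arguably more direct and makes the dependence on the stability theorem explicit; the paper's route has the advantage of not requiring any a priori regularity of $\X\mapsto\mu^{\X}$, since measurability is manufactured step by step rather than deduced from continuity.

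One point you should make explicit: continuity of $\X\mapsto\mu^{\X}$ gives Borel measurability in $\X$, but $\mathfrak{C}_T$-\emph{progressivity} in the sense of \cite{FLZ25} also requires causality, i.e.\ that $\mu^{\X}_t$ depends only on $\X|_{[0,t]}$. This follows from uniqueness of the McKean--Vlasov solution on subintervals (so that $Y^{\X}|_{[0,t]}$ is determined by $\X|_{[0,t]}$), but it is not a consequence of continuity alone and deserves a sentence. With that addition, your argument goes through; the ``obstacle'' you flag in the final paragraph is real but routine, since the substitution $\mu\leftarrow\mu^{\X}_t$ involves a deterministic (in $\omega'$) quantity and the composition lemmas of \cref{sec.comp} handle the controlled-rough-path structure uniformly.
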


\begin{proof}
  This was discussed in detailed in \cite{FLZ25} so we content ourselves with a
  sketch. By the very integral equation satisfied by $Y^{\X}$, it
  constitutes a rough It\^o process in the sense of \cite{FHL21, FLZ25}. It then
  suffices to check that all the coefficients fields $b, \sigma, (f, f')$ that
  appear in the Lebesgue, It\^o and rough stochastic integrals, respectively,
  have the correct $\mathfrak{C}_T$-progressive measurability. Since these
  involve $Y^{\X}$, we proceed by Picard approximation. Following the
  proof of Theorem~\ref{thm.MV.wellposed}
  one can do so by a two-step argument, as carried out in Section \ref{sec:proofs_MV_wp}.
That is, first
  solve we solve \ \eqref{eqn.MVrsde} for a fixed stochastic controlled rough
  path $(\eta^{\tmmathbf{X}}, (\eta^{\tmmathbf{X}})')$, which has the correct
  $\mathfrak{C}_T$-progressive measurability. The existence of a
  $\mathfrak{C}_T$-progressive version follows then from the results of
  [FHL25] for \ $\mathfrak{C}_T$-progressive versions of RSDEs. The solution
  to \eqref{eqn.MVX} is then constructed by another fixed point argument, the
  iteration of which again involves solving RSDEs, which, upon choosing a good
  version, preserves the desired $\mathfrak{C}_T$-progressive measurability.
  This iteration is contractive, hence convergent for $T \ll 1$, the limit
  hence also $\mathfrak{C}_T$-progressive. Finally, it is easy exercise that
  the (finite) patching required for a global solution on some given interval
  $[0, T]$ is also $\mathfrak{C}_T$-progressive.
  
  Rather than spelling out all details here, we mention two alternative
  proofs. As noted in Remark \ref{rmk.2ndway}, one can avoid the two-step argument in setting
  up a single Picard iteration using the Lions lift, the above construction
  then simplifies accordingly. Yet another argument, as noted in \cite{FLZ25}, is
  based on propagation of chaos, where one can rely directly on known results
  for RSDEs \cite{FLZ25} and then pass to the mean-field limit, which of course
  preserves measurability properties. \ 
\end{proof}

Unless otherwise mentioned we always work with the version of
$\{Y^{\X} : \X \in \mathscr{C}_T \}$ supplied by the previous
result. Together with the assumptions on $\tmmathbf{W} (\omega'')$, a
{\em measurable} process is then obtained by randomization
\[ \bar{Y} (\omega) \assign Y^{\X (\omega')} |_{\X =
   \bm{W} (\omega'')} . \]
The remaining details are then exactly as in \cite{FLZ25}. In particular, assuming
causal (or no direct) dependence on $\tmmathbf{X}$ and assuming that the
random rough path $\bm{W} (\omega'')$ is the It\^o lift of a Brownian motion
$W (\omega'')$ it follows that $\bar{Y} (\omega)$ is a solution to
\eqref{eqn.MV_brownian}, with, thanks to Theorem 3.6 in \cite{FLZ25}, 
\[ \mu_t = \mathrm{Law} (\bar{Y}_t \mid W) = \tmop{Law} (Y^{\tmmathbf{X}}_t)
   |_{\X = \bm{W} (\omega'')} . \]
In view of (known) uniqueness results for strong solutions of
\eqref{eqn.MV_brownian}, it is then clear that our construction, if
specialized as explained, covers exactly the classical situation of
McKean--Vlasov with common noise. 

\section{Lions-type lifts and law-invariance}
\label{sec.preparatory_material}
\label{sec:frechet_regularity}

In the whole section we fix three real numbers 
\begin{equation}
	m\ge p\ge1\vee q\,,
	\quad \quad 
	q\ge0\,.
\end{equation} 

Let \( \Omega=(\Omega, \mathcal G,\P) \) be a probability space.
Recall that \(  L_q=L_q(\mathcal G;W) \) denotes the space of \( W\)-valued random variables of order \( q \) on \(\Omega\), topologized by the distance \( (\xi,\eta)\mapsto\|\xi-\eta\|_q \).

Recalling the notations of \cref{exa:regularity_rv}, we will now discuss some further properties of the classes
\[
\C^\gamma_{b,p,q}=\C^\gamma_{b,L_p}( L_q,\bar W)\,.
\] 
First, we observe from \cref{lem:embedding} the continuous embeddings
\begin{align}
	&\C^\gamma_{b,p,q}\hookrightarrow \C^\gamma_{b,p,p}
	\hookrightarrow \C^\gamma_{b,m,p}.
	\label{id.cmq}
\end{align}

It is clear that the first embedding in \eqref{id.cmq} is strict whenever $p>q$. So is the right-most one, as indicated by the following example.
\begin{example}\label{ex.lip}
	Let $h\colon \R\to\R$ be a bounded smooth function with bounded derivatives and define the function $g\colon L_2\to\R$ by   $g(\xi)=\E [h(\xi)]$. While this appears to 
	be a nice function of the law of $\xi$, it is known from \cite[Remark 5.80]{MR3752669} that there exists a  compactly supported smooth function $h$ such that $g$ is not twice continuously differentiable in Fr\'echet sense, 
	hence does not belong to $\C^\gamma_{b,2,2}$, whenever $\gamma > 2$.\footnote{Attention for $\gamma = 2$: per our definitions, $\C^2_{b,2,2} = \C^2_{b,2}$ does not mean twice continuously differentiable but continuously differentiable with Lipschitz derivative.
	In the particular example, $D g (\xi) = h' (\xi)$ which is clearly Lipschitz in $\xi \in L_2$, since $h'$ is Lipschitz.}
	We thus want to understand when $g \in \mathcal{C}^{\gamma}_{b, p, 2}$ for $\gamma
\in (2, 3]$ and $p \ge 2$. From 
\[ D^2 g (\xi) : L_p \times L_p \ni (A, B) \mapsto \mathbb{E} (h'' (\xi) A B),
\]
H\"older inequality gives $| \mathbb{E} (h'' (\xi) A B) | \leqslant \| h'' (\xi) \|_r
\| A \|_p \| B \|_p $ with $1 / r + 2 / p = 1$. Since $h''$ is bounded, this
always applies, even with $r = + \infty$ and $p = 2$, and shows that $D^2 g
(\xi)$ is bilinear continuous and can indeed, by duality, by identified with
$h'' (\xi) \in L_r$. To obtain the modulus of $L_2\ni \xi \mapsto \| h''
(\xi) \|_r$ along $L_p$, we take $\xi,\tilde\xi\in L_2$ such that  $\xi - \tilde \xi \in L_p$ and consider two cases. When $\infty >  r \ge p$ (equivalently $2 < p\le 3$), we have (using boundedness and Lipschitzness of $h''$,
then geometric interpolation)
		\[ \mathbb{E} | h'' (\xi) - h'' (\tilde{\xi}) |^r \lesssim \mathbb{E} | \xi -
   \tilde{\xi} |^p \Rightarrow \| h'' (\xi) - h'' (\tilde{\xi}) \|_{r}
   \lesssim (\| \xi - \tilde \xi \|_{p} )^{\frac{p}{r}} \]
   so that $\gamma= 2 + p/r = 2 + p (1 - 2 / p) = p$.
   When $r< p$ (equivalently $p>3 $), we have
   \[ \| h'' (\xi) - h'' (\tilde{\xi})\|_r \lesssim \| \xi -
   \tilde{\xi} \|_p \]
   so that $\gamma= 2 + 1 =3$. 
   In both cases, we have $g\in \C^{p\wedge3}_{b,p,2}$. 

   In fact, by considering higher derivatives, it is straightforward to verify that $g\in \C^{p}_{b,p,2}$ for all $p > 2$. 
   Furthermore, for this example, the ambient  space $L_2$ plays a minor role,  we can replace it by any $L_q$ with $0\le q\le p$, and we have $g\in \C^{p}_{b,p,q}$ even for general $p \in [1,\infty)$. Indeed, the case $p \in (1,2]$ only involves $Dg (\xi) = h' (\xi)$ and is elementary to discuss, the case $p=1$ even easier. 
   
	
\end{example}


%
%
Importantly for our needs, the following integral representation for the increments of a map that is differentiable  along \( L_p\subset L_q. \)
If \( \aby \) is a real Banach space and $g\colon L_q\to\aby$ forms an element of \( \mathcal C^\gamma_{b,p,q} \) for some \(\gamma>1\), then it follows from the mean-value theorem (\cref{thm.mvt}) that
\begin{align}
	g(\xi+\eta)-g(\xi)=\int_0^1 Dg(\xi+ \theta\eta)[\eta]d\theta
	\label{id.mvt}
	\quad \quad 
	\text{ for all }\xi\in L_q \enskip\text{and} \enskip \eta\in L_p\,.
\end{align}

Although Lions lifts per \cref{def.Lionslift} depend on the choice of probability space $\W$, they are law-invariant, i.e. \(\hat f(\xi)=\hat f(\bar \xi)\) whenever \(\law(\xi)=\law(\bar \xi)\). The following result shows that this property extends to their derivatives and certain conditioning.

\begin{proposition}[Law-invariance]\label{lem.lawDg}
	Suppose that $f\colon \cpp_q(W)\to\aby$ is such that  
	\[
	g\colon L_q\to\aby,\quad 
	\xi\mapsto g(\xi):=f(\law_{\Omega}\xi)
	\] 
	is continuously $L_p$-Fr\'echet differentiable 
	(recall that we use the shorthand \( L_q=L_q(\mathcal G;W)\)).
	Let $(\xi,\eta)\in  L_q\times L_p$.
	\begin{enumerate}[(i)]
		\item\label{item.lawinvariant} 
		For every \( (\bar\xi,\bar\eta)\in  L_q\times L_p $ satisfying $\law(\xi,\eta)=\law(\bar\xi,\bar\eta)\),
		we have
		\[
		 Dg(\xi)[\eta]=Dg(\bar\xi)[\bar\eta] \,.
		\]
		
		\item\label{item.conditionalinvariant} 
		If \( (\Omega,\cgg,\P) \) is atomless and $\cff\subset\cgg$ is a $\sigma$-algebra such that $\xi$ is \( \cff \)-measurable, then
		\begin{equation}
            \label{law_inv_appendix}
            Dg(\xi)[\eta]=Dg(\xi)[\E(\eta|\cff)]\,.
        \end{equation}
\end{enumerate}
		Consequently, if \( (\Omega,\cgg,\P) \) is atomless and $g$ is twice continuously differentiable along \( L_p \), we also get:
	\begin{enumerate}[(i),resume]
		\item\label{item.lawD2} 
		For every \( (\xi,\eta_1,\eta_2)\in L_q(\mathcal F)\times L_p\times L_p(\mathcal F) \).
		\[
		 D^2g(\xi)[\eta_1,\eta_2]=D^2g(\xi)[\E(\eta_1|\cff),\eta_2] \,.
		\]
	\end{enumerate}
\end{proposition}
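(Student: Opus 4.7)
For \eqref{item.lawinvariant}, I would argue directly from the definition of the Fr\'echet derivative as a limit of difference quotients,
\[
Dg(\xi)[\eta] = \lim_{t \to 0} \frac{g(\xi + t\eta) - g(\xi)}{t}.
\]
Since $g(\zeta) = f(\law_\Omega\zeta)$ depends only on $\law\zeta$, and the hypothesis $\law(\xi, \eta) = \law(\bar\xi, \bar\eta)$ forces $\law(\xi + t\eta) = \law(\bar\xi + t\bar\eta)$ and $\law(\xi) = \law(\bar\xi)$ for every $t$ (push these laws through the affine maps $(x,y)\mapsto x+ty$ and $(x,y)\mapsto x$), the difference quotients at $(\xi,\eta)$ and $(\bar\xi,\bar\eta)$ coincide. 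Passing to the limit proves \eqref{item.lawinvariant}.

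For \eqref{item.conditionalinvariant}, by linearity of $Dg(\xi)[\cdot]$ it suffices to show $Dg(\xi)[\zeta] = 0$ for $\zeta := \eta - \E(\eta|\cff) \in L_p$, which satisfies $\E(\zeta|\cff) = 0$, under the standing hypothesis that $\xi$ is $\cff$-measurable. The main idea is to exploit atomlessness of $(\Omega, \cgg, \P)$ to produce a sequence $(\zeta_i)_{i \ge 1}$ of $L_p$-random variables on the same space that is conditionally i.i.d.\ given $\cff$, each $\zeta_i$ sharing the conditional distribution of $\zeta$ given $\cff$. This is a standard construction: combine a regular conditional distribution of $\zeta$ given $\cff$ with a sequence of i.i.d.\ uniform random variables on $[0,1]$ independent of $\cff$ (available thanks to atomlessness on a Polish space). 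Since $\xi$ is $\cff$-measurable, $\law(\xi, \zeta_i) = \law(\xi, \zeta)$ for each $i$, so \eqref{item.lawinvariant} yields $Dg(\xi)[\zeta_i] = Dg(\xi)[\zeta]$; linearity then gives
\[
Dg(\xi)\Big[\tfrac{1}{n}\sum_{i=1}^n \zeta_i\Big] = Dg(\xi)[\zeta].
\]
The conditional strong law of large numbers combined with the Jensen bound $\|\tfrac{1}{n}\sum_i \zeta_i\|_p \le \|\zeta\|_p$ (valid for $p\ge 1$) and a uniform integrability or Marcinkiewicz--Zygmund estimate yields $\tfrac{1}{n}\sum_i \zeta_i \to \E(\zeta|\cff) = 0$ in $L_p$. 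Since $Dg(\xi) \in \LL(L_p, \aby)$ is bounded linear, the conclusion $Dg(\xi)[\zeta] = 0$ follows by continuity.

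Part \eqref{item.lawD2} follows the same scheme with two preliminary observations. First, iterating the difference-quotient argument of \eqref{item.lawinvariant} shows that $D^2 g(\xi)[\eta_1,\eta_2]$ is determined by $\law(\xi,\eta_1,\eta_2)$: indeed, $Dg(\xi+s\eta_1)[\eta_2]$ is determined by $\law(\xi+s\eta_1,\eta_2)$, which is the push-forward of $\law(\xi,\eta_1,\eta_2)$ under $(x,y,z)\mapsto(x+sy,z)$, and differentiating in $s$ preserves this law-dependence. Second, with $\xi$ and $\eta_2$ both $\cff$-measurable, construct conditionally i.i.d.\ copies $\eta_1^{(i)}$ of $\eta_1$ given $\cff$ as above; joint-law invariance gives $D^2 g(\xi)[\eta_1^{(i)},\eta_2] = D^2 g(\xi)[\eta_1,\eta_2]$ for each $i$, and bilinearity together with $L_p$-convergence of the Ces\`aro averages yields
\[
D^2 g(\xi)[\E(\eta_1|\cff), \eta_2] = \lim_{n\to\infty} D^2 g(\xi)\Big[\tfrac{1}{n}\sum_{i=1}^n \eta_1^{(i)}, \eta_2\Big] = D^2 g(\xi)[\eta_1, \eta_2].
\]

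The main technical hurdle is the construction of the conditionally i.i.d.\ copies on the original atomless Polish space and the verification that the Ces\`aro averages converge in $L_p$ (not merely almost surely); both are standard but deserve care. Once these are in hand, the rest of the argument is an immediate consequence of linearity, boundedness of the derivatives, and part \eqref{item.lawinvariant}.
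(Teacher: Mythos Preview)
Your argument for part~(i) matches the paper's exactly.

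For part~(ii), your approach via conditionally i.i.d.\ copies and the law of large numbers is conceptually appealing but has a genuine gap: atomlessness of $(\Omega,\cgg,\P)$ does \emph{not} guarantee a sequence of i.i.d.\ $U[0,1]$ variables independent of $\cff$ on the same space. For a concrete obstruction, take $\Omega=[0,1]$ with Lebesgue measure and $\cff=\sigma(\omega\mapsto 2\omega\bmod 1)$; each $\cff$-fibre is a two-point set, so one cannot realize even two conditionally independent non-degenerate copies of $\zeta$ on it. The remedy is to enlarge to $\tilde\Omega=\Omega\times[0,1]^\infty$, where the needed uniforms exist by construction, and then argue that the Lions lift $\tilde g$ on $L_q(\tilde\Omega)$ is again continuously $L_p$-Fr\'echet differentiable with matching derivative values---this follows because any two atomless Polish probability spaces are measure-isomorphic, so $\tilde g=g\circ\Phi$ for an isometric isomorphism $\Phi\colon L_q(\tilde\Omega)\to L_q(\Omega)$, after which part~(i) transports the conclusion back to $\Omega$. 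With this fix your argument is complete and arguably more direct than the paper's, which instead (a) reduces to $\xi,\eta\in L_\infty$ by density, (b) treats finite $\cff$ by writing $\xi=\sum c_i\mathbf 1_{G_i}$ and symmetrizing on each atom $G_i$ via a permutation-averaging device from \cite{MR2651517}, and (c) passes to general $\cff$ by approximation with finite sub-$\sigma$-algebras in $L_\infty$. The paper's route stays on the original space throughout but requires the two-stage approximation; yours works directly in $L_p$ once the extension is justified.

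For part~(iii), your argument is correct but heavier than necessary: the paper simply writes
\[
D^2g(\xi)[\eta_1,\eta_2]=\lim_{\theta\to0}\theta^{-1}\bigl(Dg(\xi+\theta\eta_2)[\eta_1]-Dg(\xi)[\eta_1]\bigr),
\]
and since $\xi+\theta\eta_2$ is $\cff$-measurable for every $\theta$, part~(ii) applies to each term in the quotient and the result follows in one line.
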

To illustrate how this result connects with the properties of L-derivatives, suppose for the sake of argument that \( p=q=2\),
(in which case  \( g\) is clearly L-differentiable in the sense of \cite[Def.~5.22]{carmona2018probabilistic}). Then, for any \( \xi,\eta\in L_2 \), we have the presentation
\[
\begin{aligned}
	Dg(\xi)[\eta]
	&=\E[\partial_\mu f(\law(\xi))(\xi)\cdot\eta]
	\\
\end{aligned}
\]
from which \eqref{law_inv_appendix} follows trivially by conditioning on \(\cff\).

\begin{proof}[Proof of \cref{lem.lawDg}]
	By the Hahn--Banach Theorem, we can assume without loss of generality that \( \mathcal Y=\R \). Clearly, part \ref{item.lawinvariant} is a consequence of 
	\[
	D g(\xi)[\eta]=\lim_{\theta\downarrow0}\frac{g(\xi+\theta\eta)- g(\xi)}{\theta}=\lim_{\theta\downarrow0}\frac{g(\bar\xi+\theta\bar\eta)-g(\bar\xi)}{\theta}
	=D g(\bar\xi)[\bar\eta].
	\] 
	Part \ref{item.lawD2} follows from \ref{item.conditionalinvariant} and the relation
    \begin{align*}
        D^2g(\xi)[\eta_1,\eta_2]=\lim_{\theta \to 0}\frac{Dg(\xi+\theta \eta_2)[\eta_1]-Dg(\xi)[\eta_1]}{\theta} . 
    \end{align*}
	Furthermore, because \(L_\infty\) is dense in \(L_p,L_q\) and \(g\) is continuously $L_p$-Fr\'echet differentiable, it suffices to show \eqref{law_inv_appendix} assuming additionally that  \( \xi,\eta\in L_\infty. \)
	
	When \( \xi \) is a deterministic constant, property \eqref{law_inv_appendix} is a consequence of \cite[Lemma 4.2]{MR2277714}. Indeed, in this case, the set $C=\{\bar \eta\in L_\infty(W):D g(\xi)[\bar \eta]=D g(\xi)[\eta]\}$ is a closed convex, law invariant subset of $L_\infty$, and hence $\E(\eta|\cff)\in C$ by the aforementioned result. 
	When $\xi$ is a genuine random variable, $C$ is no longer convex or law invariant, this argument breaks down and we proceed in the following way.  
	\smallskip

	\textit{Step 1.}
	Consider first the case $\cff=\sigma(G_1,\ldots,G_k)$ where $G_i$'s are some disjoint measurable events 
	such that $\P(G_i)>0$ and $\Omega=\cup_{i=1}^k G_i$. 
	Since $\xi\in\cff$, it has the form $\xi=\sum_{i=1}^k c_i\mathds{1}_{G_i}$ 
	where $c_i$'s are some finite constants. Let $\varepsilon>0$ be fixed. 
	For each $i=1,\ldots,k$, $(G_i,\cgg_i,\P_i)$ is an atomless probability space where $\cgg_i=\{A\cap G_i\vert A\in\cgg\}$ and $\P_i:\cgg_i\to[0,1]$, $A\mapsto \P(A)/\P(G_i)$. Reasoning as in \cite[Lemma 1.3, step 2]{MR2651517}, we obtain some random variable $\eta^i=\frac1{n!}\sum_{s\in S_n}\eta^{i,s}$ on $(G_i,\cgg_i,\P_i)$ such that $\law(\eta^{i,s})=\law(\eta\vert_{G_i})$ under $\P_i$ for every $s\in S_n$ and $\|\eta^i- \E_{\P_i}(\eta\vert_{G_i})\|_\infty\le \varepsilon$.
	(Here $n=n(\varepsilon)$ is some positive integer and $S_n$ is the group of permutations on $\{1,\ldots,n\}$.)
	Since $\xi\vert_{G_i}=c_i$ is a constant, we have for every $s\in S_n$,
	\begin{align}\label{tmp.equallaw}
		\law(\xi\vert_{G_i},\eta^{i,s})=\law(\xi\vert_{G_i},\eta\vert_{G_i}) \quad\text{under}\quad\P_i.
	\end{align}
	Define
	\begin{align*}
		\bar \eta\vcentcolon=\sum_{i=1}^k \eta^i\mathds{1}_{G_i}=\frac1{n!}\adjustlimits\sum_{s\in S_n}\sum_{i=1}^k \eta^{i,s}\mathds{1}_{G_i}
	\end{align*}
	and note that $\xi=\sum_{i=1}^k \xi\vert_{G_i}\mathds{1}_{G_i} $ trivially.
	From \eqref{tmp.equallaw}, we have for every $s\in S_n$,
	\begin{align*}
		\law\Big(\xi,\sum_{i=1}^k \eta^{i,s}\mathds{1}_{G_i}\Big)=\law(\xi,\eta) \quad\text{under}\quad \P.
	\end{align*}
	Applying part \ref{item.lawinvariant}, we have $D g(\xi)\left[\sum_{i=1}^k \eta^{i,s}\mathds{1}_{G_i}\right]=D g(\xi)[\eta]$ for every $s\in S_n$,
	and hence
	\begin{align}\label{tmp.Dgbareta}
		D g(\xi)[\bar \eta]= D g(\xi)\left[\frac1{n!}\adjustlimits\sum_{s\in S_n}\sum_{i=1}^k \eta^{i,s}\mathds{1}_{G_i}\right]=D g(\xi)[\eta].	
	\end{align}
	On the other hand, we have	
	\begin{align*}
		\|\bar\eta-\E(\eta|\cff)\|_\infty=\left \|\bar \eta-\sum_{i=1}^k\E_{\P_i}[\eta\vert_{G_i}]\mathds{1}_{G_i}\right \|_\infty\le \varepsilon.
	\end{align*}
	Hence, we can send $\varepsilon\to0$ in \eqref{tmp.Dgbareta} to obtain that $D g(\xi)[\eta]=D g(\xi)[\E(\eta|\cff)]$.\smallskip
	
	\textit{Step 2.}
	For a general sub-$\sigma$-algebra $\cff\subset\cgg$, we may find a sequence of finite sub-$\sigma$-algebras $\{\cff_k\}_{k\ge0}$ of $\cgg$ such that 
	\begin{align*}
		\lim_{k\to\infty}\left[\|\E(\xi|\cff)-\E(\xi|\cff_k)\|_\infty+\|\E(\eta|\cff)-\E(\eta|\cff_k)\|_\infty\right]=0.
	\end{align*}
	Since $\xi\in\cff$, we can replace $\E(\xi|\cff)$ by $\xi$ in the above limit. From the previous step, we have
	\begin{align*}
		D g(\E(\xi|\cff_k))[\eta]
		=D g(\E(\xi|\cff_k))[\E(\eta|\cff_k)].
	\end{align*}
	Sending $k\to\infty$, using the above limit and continuity of $D g$, we obtain that $D g(\xi)[\eta]=D g(\xi)[\E(\eta|\cff)]$, completing the proof of part \ref{item.conditionalinvariant}.	
\end{proof}


\section{Composition lemmas}
\label{sec.comp}

Suppose that \(\W=(\Omega, \mathcal G,\{\cff_t\},\P) \), \(\W'=(\Omega', \mathcal G',\{\cff_t\},\P') \) are two Polish, filtered probability spaces and introduce
\[
\W\otimes \W'
=(\Omega\times \Omega',\mathcal G\otimes \mathcal G',\{\mathcal F_t\otimes \mathcal F_t'\}_{t\ge0},\mathbb P\otimes \mathbb P')\,,
\]
whose expectation is denoted by \( \E\otimes \E' \).
We are going to introduce and analyze an extension of the vector fields $(f^\eta,(f^\eta)')$ introduced in \eqref{def.feta} for stochastic controlled processes $(\eta,\eta')$ on the product space \( \boldsymbol{\Omega}\otimes \boldsymbol{\Omega'}\).

Let $(\eta,\eta')$ be a stochastic controlled rough path in $\mathbf{D}^{\beta,\beta'}_XL_{p}([0,T],\W\otimes\W';W)$, and $(f,f')$ be a measure dependent stochastic process such that $(\hat f,\hat f')\in\mathbf{D}^{\beta,\beta'}_XL_{m,\infty}\C^\gamma_{b,p,q}$ for some $\beta,\beta'\in[0,1]$ and \(\gamma>1\).
Suppose that $ \eta$ is pathwise continuous and \( q \)-integrable. 
We work with a version of \( \eta \) (whose existence is ensured by Fubini theorem) such that \( \eta_t(\omega,\cdot) \) forms an element of \( L_q'\) for \( \P \)-almost every \( \omega\in \Omega \) and every \( t\in [0,T] \). 
In this setting, we introduce for each \( \omega\in \Omega\), \( t\in[0,T]\) and \( y\in W\)
\begin{equation}\label{def.geta}
	\left \{\begin{aligned}
		&f\comp\eta_t(\omega,y)=\hat f_t(\omega,y,\eta_t(\omega,\cdot))\,,
		\\
		&(f\comp\eta)'_t(\omega,y)=D_2\hat f_t(\omega,y,\eta_t(\omega,\cdot))[\eta'_t(\omega,\cdot)]+\hat f'_t(\omega,y,\eta_t(\omega,\cdot))
		\,,
	\end{aligned}
	\right .
\end{equation}
and we shall drop the dependence on \( \omega \) when there is no risk of confusion.
When $(\eta,\eta')$ is independent of $\omega'$, we note that the vector fields \( (f^\eta,(f^{\eta})') \) introduced in \eqref{def.feta} satisfy
\begin{align}\label{id.expiscomp}
	(f^\eta,(f^\eta)')=(f\comp \slashed\eta,(f\comp \slashed\eta)')
\end{align}
where we recall that \( \slash \) is the isometry introduced in \eqref{slashing}.
For the treatment of McKean--Vlasov equation with rough common noise, equation \eqref{eqn.MV}, one only needs to consider vector fields of the type \( (f^\eta,(f^{\eta})') \). However, in a subsequent work, we are going to establish convergence of particle systems toward \eqref{eqn.MV}, in which the generality of \eqref{def.geta} is necessary.

We now state an analogous result to \cite[Lemma 3.20]{FHL21}, which shows in particular that the vector fields in 
\eqref{def.geta} are stochastically controlled.
In comparison to \cite{FHL21}, because of the different domains of the vector fields, we need to work with a different set of integrability parameters, which are reflected in the following notation
\begin{align*}
	&\|\|\delta \eta\|_p'\|_{\beta;m,\infty}=\sup_{(s,t)\in \Delta}\frac{\|\|\|\delta \eta_{s,t}\|_p'|\cff_s\|_m\|_\infty }{(t-s)^\beta},
	\\& \|\|\eta'\|'_p\|_{\beta';m,\infty}=\sup_{t\in[0,T]}\|\|\eta'_t\|_p'\|_\infty+ \sup_{(s,t)\in \Delta}\frac{\|\|\|\delta \eta_{s,t}'\|_p'|\cff_s\|_m\|_\infty }{(t-s)^{\beta'}},
	\\& \|\|(\E\otimes \E')_\bigcdot R^\eta\|'_p\|_{\beta+\beta';\infty}=\sup_{(s,t)\in \Delta}\frac{\|\|\E_s\otimes\E'_sR^\eta_{s,t}\|_p'\|_\infty }{(t-s)^{\beta+\beta'}}.
\end{align*}
%
\begin{lemma}\label{prop.newcvec}
	Fix parameters $\beta,\beta'\in[0,1]$, \( \gamma>1\).
	Let $(\eta,\eta')$ be a stochastic controlled rough path in \(\mathbf{D}^{\beta,\beta'}_{X}L_{p}([0,T];\boldsymbol\Omega\otimes\boldsymbol{\Omega'};W)\) such that $\eta_t(\omega,\cdot)\in L'_q$ for every $t\in[0,T]$ and $\P$-a.s.\ $\omega$. Assume that there is a non-negative finite number $M$ such that 
	\begin{align}\label{con.eta1new}
		\|\|\delta\eta\|'_p\|_{\beta;m,\infty}+ \|\|\eta'\|'_p\|_{\beta';m,\infty}+\|\|(\E\otimes \E')_\bigcdot R^\eta\|'_p\|_{\beta+\beta';\infty}\le M.
	\end{align}
	Take \( n\in [m,\infty] \), \( m\ge2 \) and 
	fix measure-dependent coefficients  
	\begin{equation}
		\label{measure-dependent-lemma}
		(f,f')\colon\Omega\times[0,T]\longrightarrow \mathcal B_b\big(W\times \cpp_q(W), \bar W\big)
		\times \mathcal B_b\big(W\times \cpp_q(W),\LL(V, \bar W)\big)
	\end{equation}
	such that \( (\hat f,\hat f') \) forms a stochastic controlled vector field in $\mathbf{D}^{\beta,\beta'}_XL_{m,n}\C^\gamma_{b,p,q}$. Define  $f\comp\eta$ and $(f\comp\eta)'$ according to \eqref{def.geta}.
	Then there is a finite constant $C=C(\beta,\beta',\gamma,T)$ such that 
	\begin{align}
		\label{est:stability_spatial}
		\|(f\comp\eta,(f\comp\eta)')\|_{\gamma;n}
		&\le C\|(\hat f,\hat f')\|_{\gamma;n}(1+M),
		\\
		\label{est:stability_temporal}
		\bk{(f\comp\eta,(f\comp\eta)')}_{X;\beta,\beta'';m,n}
		&\le C(\|(\hat f,\hat f')\|_{\gamma;n}+\bk{(\hat f,\hat f')}_{X;\beta,\beta';m,n})(1+M)^{\gamma\wedge 2} \,,
	\end{align}
	where \(\beta''=[(\gamma-1)\beta]\wedge\beta'\wedge\beta\).
	Consequently, $\left(f\comp \eta,(f\comp \eta)'\right)$ is a stochastic controlled vector field in $\mathbf{D}^{\beta,\beta''}_XL_{m,\infty}\C^\gamma_b(W)$.
\end{lemma}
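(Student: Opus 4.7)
The plan is to adapt the scheme of \cite[Lemma 3.20]{FHL21} to the measure-dependent setting, treating $(f\comp \eta, (f\comp \eta)')$ as obtained by substituting the second argument of $\hat f, \hat f'$ with the $L'_p$-valued stochastic controlled path $\omega \mapsto (\eta_t(\omega,\cdot), \eta'_t(\omega,\cdot))$. The spatial estimate \eqref{est:stability_spatial} is then essentially immediate from \eqref{est.normDkg}--\eqref{est.normdgammag}: for fixed $(\omega,t)$ the map $y\mapsto \hat f_t(\omega,y,\eta_t(\omega,\cdot))$ inherits $\C^\gamma_b$-regularity in $y$ from $\hat f_t\in \C^\gamma_{b,p,q}$ with norm bound $|\hat f_t|_{\C^\gamma_{b,p,q}}$, and for $(f\comp\eta)'$ the $D_2\hat f_t[\eta'_t]$ term gives the extra factor $\|\eta'_t\|'_p\le M$ by \eqref{est.normDkg}.

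For the temporal estimate \eqref{est:stability_temporal}, I would write the increment as
\[
\delta(f\comp\eta)_{s,t}(y)=\delta \hat f_{s,t}(y,\eta_t)+\bigl[\hat f_s(y,\eta_t)-\hat f_s(y,\eta_s)\bigr],
\]
estimating the first summand by $\bk{\delta \hat f}_{\beta;m,n}$ and the second by $|\hat f_s|_{\C^\gamma_{b,p,q}}\|\delta \eta_{s,t}\|'_p$ via the mean-value theorem (\cref{thm.mvt} applied to the $L_p'$-Fr\'echet derivative); an analogous splitting handles $\delta D(f\comp\eta)$ and the three pieces defining $\delta (f\comp\eta)'$, using in addition $\bk{\delta Df}$, $\bk{\delta f'}$ and the bounds on $\|\delta\eta\|'_p,\|\delta\eta'\|'_p$ from \eqref{con.eta1new}.

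The main work, and the main obstacle, is the bound on $\bk{\E_\bigcdot R^{f\comp\eta}}_{\beta+\beta'';m,n}$. I would use the decomposition, obtained by adding and subtracting $\hat f_s(y,\eta_t)$ and inserting $\delta \eta_{s,t}=\eta'_s\delta X_{s,t}+R^\eta_{s,t}$:
\begin{align*}
R^{f\comp\eta}_{s,t}(y) &= R^{\hat f}_{s,t}(y,\eta_t)+\bigl[\hat f'_s(y,\eta_t)-\hat f'_s(y,\eta_s)\bigr]\delta X_{s,t} \\
&\quad +\bigl\{\hat f_s(y,\eta_t)-\hat f_s(y,\eta_s)-D_2\hat f_s(y,\eta_s)[\delta \eta_{s,t}]\bigr\} + D_2\hat f_s(y,\eta_s)[R^\eta_{s,t}].
\end{align*}
The curly-bracketed Taylor remainder in the measure argument is controlled by $|\hat f_s|_{\C^\gamma_{b,p,q}}\|\delta\eta_{s,t}\|'_p{}^{\gamma\wedge 2}$ (first-order Taylor with $(\gamma-1)$-H\"older derivative if $\gamma\le 2$, second-order Taylor if $\gamma>2$), which is precisely what produces the power $(1+M)^{\gamma\wedge 2}$ and explains why the exponent $(\gamma-1)\beta$ enters $\beta''$. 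The last summand is linear in $R^\eta_{s,t}$ and, after pushing $\E_s$ through the $L'_p$-linear functional $D_2\hat f_s(y,\eta_s)[\cdot]$, is controlled by $\|\|(\E\otimes\E')_\bigcdot R^\eta\|'_p\|_{\beta+\beta';\infty}\le M$.

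The trickiest piece is taking $\E_s$ of $R^{\hat f}_{s,t}(y,\eta_t)$, since $\eta_t$ is not $\mathcal F_s$-measurable and a pointwise bound only gives the wrong power $(t-s)^\beta$. My approach is a further decomposition
\[
R^{\hat f}_{s,t}(y,\eta_t)=R^{\hat f}_{s,t}(y,\eta_s)+\bigl[R^{\hat f}_{s,t}(y,\eta_t)-R^{\hat f}_{s,t}(y,\eta_s)\bigr],
\]
where, since $\eta_s$ is $\mathcal F_s\otimes\mathcal F'_s$-measurable, the first piece is bounded after taking $\E_s$ by $\bk{\E_\bigcdot R^{\hat f}}_{\beta+\beta';n}$ applied at the frozen argument $\eta_s$ (giving the desired power $(t-s)^{\beta+\beta'}$), while the second is expanded by the mean-value theorem in the measure argument and produces contributions bounded by $(\bk{\delta D_2\hat f}_{\beta';m,n}+\bk{\delta \hat f'}_{\beta';m,n}|X|_\alpha)\|\delta\eta_{s,t}\|'_p$, hence of order $(t-s)^{\beta'+\beta}$. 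Taking the supremum over $y\in W$ inside the conditional $L_m$ norm (permissible because the bounds are already uniform in $y$) and collecting all contributions yields \eqref{est:stability_temporal}. The verification that $(f\comp\eta,(f\comp\eta)')$ is a stochastic controlled vector field in $\mathbf D^{\beta,\beta''}_XL_{m,\infty}\C^\gamma_b(W)$ then follows by combining \eqref{est:stability_spatial} and \eqref{est:stability_temporal}.
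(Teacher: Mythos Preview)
Your overall strategy coincides with the paper's: the spatial bound \eqref{est:stability_spatial} is handled exactly as you say, and your four-term decomposition of $R^{f\comp\eta}$ is, after bookkeeping, the same as the paper's identity \eqref{id.Rg}. Indeed, once you split $R^{\hat f}_{s,t}(y,\eta_t)=R^{\hat f}_{s,t}(y,\eta_s)+[R^{\hat f}_{s,t}(y,\eta_t)-R^{\hat f}_{s,t}(y,\eta_s)]$, the $\hat f'_s$-part of the bracket cancels your second term $[\hat f'_s(y,\eta_t)-\hat f'_s(y,\eta_s)]\delta X_{s,t}$ exactly, leaving the cross term $\delta\hat f_{s,t}(\eta_t)-\delta\hat f_{s,t}(\eta_s)$ of \eqref{id.Rg}. (Your stated bound on that bracket via $\bk{\delta\hat f'}$ is therefore moot, though as written it is incorrect: $\hat f'_s(y,\eta_t)-\hat f'_s(y,\eta_s)$ is a \emph{space} increment controlled by $|\hat f'_s|_{\gamma-1}$, not by the \emph{time} increment norm $\bk{\delta\hat f'}$.) The paper also uses Cauchy--Schwarz to decouple the product $|\delta D_2\hat f_{s,t}|_\infty\,\|\delta\eta_{s,t}\|'_p$ under $\E_s$, which you should make explicit.

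There is, however, a real gap in your treatment of the term $D_2\hat f_s(y,\eta_s)[R^\eta_{s,t}]$. Pushing $\E_s$ through this $L'_p$-linear functional (\cref{lem:operator}) only produces $D_2\hat f_s(y,\eta_s)[\E_s R^\eta_{s,t}]$, which is controlled by $\|\E_s R^\eta_{s,t}\|'_p$. But the hypothesis \eqref{con.eta1new} bounds $\|(\E\otimes\E')_s R^\eta_{s,t}\|'_p$, not $\|\E_s R^\eta_{s,t}\|'_p$; these differ precisely by the missing $\E'_s$. The paper closes this gap via the law-invariance property of \cref{lem.lawDg}\ref{item.conditionalinvariant}: since $\hat f_s$ is a Lions lift and $\eta_s(\omega,\cdot)$ is $\cff'_s$-measurable, one has $D_2\hat f_s(y,\eta_s)[R^\eta_{s,t}]=D_2\hat f_s(y,\eta_s)[\E'_s R^\eta_{s,t}]$ \emph{before} taking $\E_s$. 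This is the step that is genuinely new compared to \cite[Lemma~3.20]{FHL21} and is precisely why Section~\ref{sec:frechet_regularity} of the paper is needed; you should invoke it explicitly.
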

\begin{proof}
	The former estimate is straightforward, using \eqref{est.normDkg}, \eqref{est.normdgammag} and the fact that $\eta_t(\omega,\cdot)\in  L_q(\Omega';W)$ for each $t$ and \( \P \)-a.e.\ $\omega$. 
	Indeed, we have \(|(f\comp \eta)_t|_\gamma\le |\hat f_t|_{\gamma}\) and $|(f\comp \eta)'_t |_{\gamma-1}\le|D_2\hat f_t|_{\gamma-1}\|\eta'_t\|'_p+|\hat f'|_{\gamma-1}$ which yield  \eqref{est:stability_spatial}.\smallskip

	To show the latter estimate, we can assume without loss of generality that $\gamma\in(1,2]$ and $\|(\hat f,\hat f')\|_{\gamma;n}+\bk{(\hat f,\hat f')}_{X;\beta,\beta';m,n}\le1$.
	For notational simplicity, we write $\hat f_t(\omega,\eta_t)=[y\mapsto \hat f_t(\omega,y,\eta_t)]$ and similarly for $D_2\hat f_t(\omega,\eta_t)$, $\hat f'_t(\omega,\eta_t)$. We consider them as processes taking values in $\mathcal B_b(W)$.
	Applying mean value theorem, \eqref{est.normDkg} and \eqref{con.gammamholder}, we have $|\hat f_t(\bar \xi)-\hat f_t(\xi)|_\infty\le |\hat f_t|_\gamma\|\bar \xi- \xi\|'_p$ for all \( \xi,\bar \xi\in  L'_q\).
	Combining with the triangle inequality
	$|(f\comp \eta)_t-(f\comp \eta)_s|_\infty
	\le |\delta\hat f_{s,t}(\eta_t)|_\infty+|\hat f_s(\eta_t)-\hat f_s(\eta_s)|_\infty$,
	we have 
	\[
	\||\delta (f\comp\eta)_{s,t}|_\infty |\mathcal F_s\|_m\le \||\delta \hat f_{s,t}|_\infty |\mathcal F_s\|_m +\||\hat f_s|_\gamma \|_n \|\|\delta \eta_{s,t}\|'_p|\cff_s\|_m \,,
	\]
	showing in particular that
	\(  \bk{\delta (f\comp\eta)}_{\beta;m,n}\lesssim \bk{\delta \hat f}_{\beta;m,n}+ M\|\hat f
	\|_{\gamma;n} \). 
	
	Similarly, by triangle inequality, 
	\begin{align*}
		|(f\comp\eta)'_t-(f\comp\eta)'_s|_{\infty}
		&\le |\delta D_2\hat f_{s,t}(\eta_t)[\eta'_t]|_{\infty} + |D_2\hat f_s(\eta_t)[\eta'_t]-D_2\hat f_s( \eta_s)[\eta'_t]|_{\infty}
		\\
		&\quad+|D_2\hat f_s(\eta_s)[\delta\eta'_{s,t} ]|_{\infty}
		+|\delta \hat f'_{s,t}(\eta_t)|_{\infty} +|\hat f'_s(\eta_t)-\hat f'_s(\eta_s)|_{\infty}
		\\&
		\le |\delta D_2\hat f_{s,t}|_\infty\|\eta'_t\|'_p+|\hat f_s|_{\gamma}(\|\delta \eta_{s,t}\|'_p)^{\gamma-1}\|\eta'_t\|'_p 
		\\&\quad+|\hat f_s|_\gamma\|\delta \eta'_{s,t}\|'_p+|\delta\hat f'_{s,t}|_\infty+|\hat f'_s|_{\gamma-1}(\|\delta \eta'_{s,t}\|'_p)^{\gamma-1}.
	\end{align*}
	Using $\C^\gamma_{b,p,q}$-regularity, we arrive at
	\begin{multline*}
		\|\||\delta (f\comp\eta)'_{s,t}|_\infty |\cff_s\|_m\|_n
		\lesssim  (\|\|\eta'_t\|'_p\|_\infty\bk{\delta D_2\hat f}_{\beta';m,n}+\bk{\delta \hat f'}_{\beta';m,n})(t-s)^{\beta'} 
		\\+\left(\|\|\delta \eta\|'_p\|_{\beta;m,\infty}^{\gamma-1} \|\|\eta'_t\|'_p\|_\infty+\|\|\delta \eta'\|'_p\|_{\beta';m,\infty}+\|\|\delta \eta\|'_p\|_{\beta;m,\infty}^{\gamma-1}\right)(t-s)^{\beta''},
	\end{multline*}		
	which by \eqref{con.eta1new} yields the estimate:
	\[
	\bk{\delta ( f\comp\eta)'}_{\beta'';m,n}\lesssim (1+M)^\gamma.
	\]
	The bound $\bk{\delta D(f\comp\eta)}_{\beta'';m,\infty}\lesssim (1+M)^\gamma$ is verified analogously. 
	To estimate $R^{f\comp\eta}_{s,t}=\delta( f\comp\eta)_{s,t}- (f\comp\eta)'_s\delta X_{s,t}$, we write
	\begin{equation}\label{id.Rg}
		\begin{aligned}
			R^{f\comp\eta}_{s,t}
			&= \hat f_s(\eta_t)-\hat f_s(\eta_s)-D_2\hat f_s(\eta_s)[\eta'_s]\delta X_{s,t}
			\\&\quad+\hat f_t(\eta_s)-\hat f_s(\eta_s)-\hat f'_s(\eta_s)\delta X_{s,t}
			\\&\quad+\hat f_t(\eta_t)-\hat f_t(\eta_s)-\hat f_s(\eta_t)+\hat f_s(\eta_s)
			\\&=R^{\hat f_s(\eta)}_{s,t}+R^{\hat f}_{s,t}(\eta_s)+(\delta \hat f_{s,t}(\eta_t)-\delta \hat f_{s,t}(\eta_s))
		\end{aligned}
	\end{equation}
	and treat each term separately.
	Using mean value theorem and \cref{lem.lawDg}, we have for the first term
	\begin{align*}
		R^{\hat f_s(\eta)}_{s,t}
		&=\hat f_s(\eta_t)-\hat f_s(\eta_s)-D_2 \hat f_s(\eta_s)[\delta \eta_{s,t}]+ D_2 \hat f_s(\eta_s)[R^\eta_{s,t}]
		\\&=\int_0^1[D_2\hat f_s(\theta \eta_t+(1- \theta)\eta_s)-D_2\hat f_s(\eta_s)][\delta \eta_{s,t}]d \theta+D_2 \hat f_s(\eta_s)[\E'_s R^\eta_{s,t}].
	\end{align*}		
	From \cref{lem:operator}, linearity and \cref{lem:prod}, we find that 
	\[
	 \E_sD_2\hat f_s(\eta_s)[\E'_s R^\eta_{s,t}]
	 =D_2\hat f_s(\eta_s)[\E_s\E'_s R^\eta_{s,t}]
	 =D_2\hat f_s(\eta_s)[\E\otimes \E'( R^\eta_{s,t}|\mathcal F_s\otimes\mathcal F'_s)].
	\]
Therefore:
	\begin{align*}
		|\E_s R^{\hat f_s(\eta)}_{s,t}|_\infty \lesssim \|\|\delta \eta\|'_p\|_{\beta;m,\infty}^{\gamma}|t-s|^{\gamma \beta}+\|\|(\E\otimes \E')_\bigcdot R^\eta\|'_p\|_{\beta+\beta';\infty}|t-s|^{\beta+\beta'},
	\end{align*}
	showing, indeed, that 
	\[
	\bk{R^{\hat f_s(\eta)}}_{\beta+\beta'';n}\lesssim(1+M)^\gamma\,.
	\]
	For the term $R^{\hat f}_{s,t}(\eta_s)$ in \eqref{id.Rg}, we have
	\begin{align*}
		\||\E_s R^{\hat f}_{s,t}(\eta_s)|_\infty \|_n
		\le\bk{\E_\bigcdot R^{\hat f}}_{\beta+\beta';n}(t-s)^{\beta+\beta'}.
	\end{align*}
	For the last term in \eqref{id.Rg}, we start from the Lipschitz estimate 
	\begin{align*}
		\sup_{y\in W}|\delta \hat f_{s,t}(y,\eta_t)-\delta \hat f_{s,t}(y,\eta_s) |\le\sup_{(y,\xi)\in W\times L_q(W)} |\delta D\hat f_{s,t}(y,\xi)|\|\delta \eta_{s,t}\|'_p.
	\end{align*}
	Consequently, using Cauchy-Schwarz inequality and \eqref{con.eta1new}, we have
	\begin{align*}
		\E_s|\delta \hat f_{s,t}(\eta_t)-\delta \hat f_{s,t}(\eta_s) |_\infty\le (\||\delta D_2\hat f_{s,t}|_\infty|\cff_s\|_2) (\|\|\delta\eta_{s,t}\|_p'|\cff_s\|_2 )
	\end{align*}
and thus:
\[
\|\E_s|\delta \hat f_{s,t}(\eta_t)-\delta \hat f_{s,t}(\eta_s) |_\infty\|_n
\le \bk{\delta D\hat f}_{\beta';m,n}M (t-s)^{\beta+\beta'}.
\]
	Putting these estimates into \eqref{id.Rg} shows the desired estimate.
	This concludes the proof.
\end{proof}

The next result establishes stability for the map $(f,f',\eta,\eta')\mapsto (f\comp\eta, (f\comp\eta)')$ and is an analogue of \cite[Prop. 3.25]{FHL21}. 
\begin{lemma}[Stability]
	\label{lem.compose2}
	Consider $\gamma>2$; $\alpha\in(\frac13,\frac12]$; $\beta,\beta'\in[0,1]$.
	Let $X,\bar X$ be two $\alpha$-H\"older paths.
	Let \((\eta,\eta')\), $(\bar \eta,\bar \eta')$  be stochastic controlled rough paths in \(\mathbf{D}^{\beta,\beta'}_{X}L_{m,\infty}([0,T];\W\otimes\W';W)\) and \(\mathbf{D}^{\beta,\beta'}_{\bar X}L_{m,\infty}([0,T];\W\otimes\W';W)\) respectively such that for every $t\in[0,T]$, $\P$-a.s.\ $\omega$,
	\[
	\eta_t(\omega,\cdot),\bar\eta_t(\omega,\cdot)\in L'_q \quad\text{and}\quad
	\eta_t(\omega,\cdot)-\bar\eta_t(\omega,\cdot)\in L'_p \,.
	\] 
	Assume furthermore that a finite constant \( M\) exists such that 
	\begin{equation}\label{con.YboundedM}
		\|\|\delta \xi\|'_p\|_{\beta; m,\infty}+\|\|\xi'\|'_p\|_{\beta';m,\infty}+\|\|(\E\otimes\E')_\bigcdot R^\xi\|'_p\|_{\beta+\beta';\infty} 
		\le M\quad \text{for each}\enskip\xi\in \{\eta,\bar\eta\}.
	\end{equation}

	Let $(f,f'), (\bar f,\bar f')$ be measure-dependent coefficients as in \eqref{measure-dependent-lemma} with Lions-lifts subject to
	\[
	(\hat f,\hat f')\in\mathbf{D}^{\beta,\beta}_XL_{m,\infty}\C^\gamma_{b,p,q}\,,
	\quad \quad 
	(D\hat f,D\hat f')\in\mathbf{D}^{\beta,\beta'}_XL_{m,\infty}\C^{\gamma-1}_{b,p,q}
	\]
	while 
	\[
	(\hat{\bar f},\hat{\bar f}')\in\mathbf{D}^{\beta,\beta'}_{\bar X}L_{m,\infty}\C^{\gamma-1}_{b,p,q}\,.
	\]
	
	\textbf{Spatial regularity.} 
	For every $t\in [0,T]$, we have
	\begin{align}
		\label{sup_f_eta}
		&\||f\comp\eta_t-\bar f\comp\bar\eta_t|_{\gamma-1}\|_m
		\lesssim \||\hat f_t-\hat {\bar f}_t|_{\gamma-1}\|_m+  \|\|\eta_t-\bar\eta_t\|'_p\wedge1\|_m\,,
		\\
		\label{sup_f_eta_prime}
		&\|| (f\comp\eta)'_t-(\bar f\comp\bar\eta)'_t|_{\gamma-2} \|_m
		\lesssim \||\hat f'_t-\hat {\bar f}'_t|_{\gamma-2}\|_m+
		\|\|\eta_t -\bar\eta_t\|'_p\wedge1\|_m+\|\|\eta'_t-\bar \eta'_t\|'_p\|_m\,.
	\end{align}
	
	\textbf{Temporal regularity.}  Let $\kappa\in[0, \beta]$ and $\kappa'\in[0, \min(\beta,\beta',(\gamma-2)\beta)]$ be some fixed numbers and put
\( F:=\bk{(\hat f-\hat{\bar f},\hat f'-\hat {\bar f}')}_{X,\bar X;\kappa,\kappa';m}+\|(\hat f-\hat{\bar f},\hat f'-\hat{\bar f}')\|_{\gamma-1;m} \). We have for every $(s,t)\in \Delta(0,T)$:
	\begin{align}
		&\| |\delta (f\comp\eta)_{s,t}-\delta (\bar f\comp\bar\eta)_{s,t}|_{\infty}\|_m
		\lesssim |t-s|^{\kappa}\left( F+\|\|\eta_s-\bar \eta_s\|'_p\wedge1\|_m \right)  
		\nonumber\\&\quad \quad \quad \quad \quad \quad \quad \quad \quad  
		\quad \quad \quad \quad 
		+ \| \|\delta \eta _{s,t}- \delta\bar\eta_{s,t}\|'_p\|_m\,,
		\label{delta_f_Y}
		\\[0.5em]
		&\||\delta D(f\comp\eta)_{s,t}-\delta D(\bar f\comp\bar\eta)_{s,t}|_{\infty}\|_m
		\lesssim|t-s|^{\kappa'}(F+\|\|\eta_s-\bar \eta_s\|'_p\wedge1\|_m) 
		\nonumber\\&\quad \quad \quad \quad \quad \quad \quad 
		\quad \quad \quad \quad \quad \quad \quad \quad \quad 
		+ \|\|\delta \eta _{s,t}- \delta\bar\eta_{s,t}\|'_p\|_m\,,
		\label{delta_Df_Y}
		\\[0.5em]
		&\||\delta  (f\comp\eta)'_{s,t}-\delta (\bar f\comp\bar\eta)'_{s,t}|_{\infty}\|_m
		\lesssim
		|t-s|^{\kappa'}(F+\|\|\eta_s-\bar \eta_s\|'_p\wedge1\|_m+\|\|\eta'_s-\bar \eta'_s\|'_p\|_m)
		\nonumber\\&\quad\quad \quad \quad \quad \quad \quad 
		\quad \quad \quad \quad 
		+\|\|\delta \eta_{s,t}-\delta \bar\eta_{s,t}\|'_p\|_m+\|\|\delta \eta'_{s,t}-\delta \bar\eta'_{s,t}\|'_p\|_m\,,
		\label{delta_f_Y_prime}
		\\[0.5em]
		&\big\||\E_sR^{f\comp\eta}_{s,t}- \E_s \bar R^{\bar f\comp\bar\eta}_{s,t}|_\infty\|_m
		\lesssim |t-s|^{\kappa+\kappa'}(F+\|\|\eta_s-\bar \eta_s\|'_p\wedge1\|_m)
		\nonumber\\&\quad \quad \quad \quad \quad \quad \quad 
		+|t-s|^{\kappa'}
		\|\|\delta \eta_{s,t}-\delta \bar \eta_{s,t}\|'_p\|_m
		+\|\|\E_s\otimes \E'_s(R^\eta_{s,t}-\bar R^{\bar \eta}_{s,t})\|'_p\|_m\,.
		\label{E_R_f_Y}
	\end{align}
	The above implied constants are deterministic and depend only upon the quantities \( M \), $T, \alpha,\gamma$, \( \|(\hat f,\hat f')\|_{\gamma;\infty} \), \( \llbracket (\hat f,\hat f')\rrbracket_{X;\beta,\beta;m,\infty} \) and \(  \llbracket (D\hat f,D\hat f')\rrbracket_{X;\beta,\beta';m,\infty}\).
\end{lemma}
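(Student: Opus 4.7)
The proof follows the same overall strategy as \cref{prop.newcvec} but keeps track of six quantities rather than estimating one, so the main work is organizational: for each term I will insert an intermediate comparison object (typically $\bar f \comp \eta$ or $\hat{\bar f}_s(\eta_s)$) and control the two resulting pieces by the regularity of $\hat f-\hat{\bar f}$ and by the mean-value theorem along $L_p$ (\cref{thm.mvt}). Throughout I may assume w.l.o.g.\ that the norms of $(\hat f,\hat f'),(D\hat f,D\hat f')$ as well as $\|\eta\|$-type quantities appearing in the implied constants are bounded by $1$, so that all products of such quantities can be absorbed into the constant.

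\textbf{Spatial estimates.} For \eqref{sup_f_eta}, write $f\comp\eta_t - \bar f\comp\bar\eta_t = (\hat f_t - \hat{\bar f}_t)(\cdot,\eta_t) + \hat{\bar f}_t(\cdot,\eta_t) - \hat{\bar f}_t(\cdot,\bar\eta_t)$. The first piece is bounded in $\C^{\gamma-1}$ by $|\hat f_t - \hat{\bar f}_t|_{\gamma-1}$; the second by $\C^\gamma$-regularity of $\hat{\bar f}_t$ and \cref{thm.mvt} along $L_p$, giving a factor $\|\eta_t-\bar\eta_t\|'_p \wedge 1$ (the truncation at $1$ comes from the boundedness of $\hat{\bar f}_t$). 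Estimate \eqref{sup_f_eta_prime} is analogous, but one splits $D_2\hat f_t(\cdot,\eta_t)[\eta'_t] - D_2\hat{\bar f}_t(\cdot,\bar\eta_t)[\bar\eta'_t]$ into three pieces via $D_2\hat f_t - D_2\hat{\bar f}_t$, the difference in $\eta_t,\bar\eta_t$ (MVT), and the difference in $\eta'_t,\bar\eta'_t$ (linearity).

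\textbf{Temporal estimates.} For \eqref{delta_f_Y}, use the decomposition
\[
\delta (f\comp\eta)_{s,t} = \delta \hat f_{s,t}(\cdot,\eta_t) + \hat f_s(\cdot,\eta_t) - \hat f_s(\cdot,\eta_s),
\]
and compare with the analogous one for $\bar f,\bar\eta$. The difference of $\delta\hat f_{s,t} - \delta\hat{\bar f}_{s,t}$ in the first term contributes $|t-s|^{\kappa}\bk{\delta \hat f - \delta\hat{\bar f}}_{\kappa;m,\infty}$; the change of argument $\eta_t \rightsquigarrow \bar\eta_t$ is handled via MVT along $L_p$; and the Newton-like difference $\hat f_s(\cdot,\eta_t)-\hat f_s(\cdot,\eta_s)$ is controlled using the $\C^\gamma$ regularity. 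After triangle inequality one collects precisely the right-hand side of \eqref{delta_f_Y}, using that $\|\eta_t - \bar\eta_t\|'_p \le \|\eta_s-\bar\eta_s\|'_p + \|\delta\eta_{s,t}-\delta\bar\eta_{s,t}\|'_p$. Estimate \eqref{delta_Df_Y} follows from the same pattern applied to $D_1\hat f(\cdot,\eta)$; \eqref{delta_f_Y_prime} requires splitting $(f\comp\eta)'$ into its two summands $D_2\hat f(\cdot,\eta)[\eta']$ and $\hat f'(\cdot,\eta)$ and applying the same two-step intermediate comparison, separately handling the extra $\delta\eta'-\delta\bar\eta'$ term via linearity.

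\textbf{Remainder estimate \eqref{E_R_f_Y}.} This is the main obstacle and requires the decomposition \eqref{id.Rg} already used in the proof of \cref{prop.newcvec}:
\[
R^{f\comp\eta}_{s,t} = R^{\hat f_s(\cdot,\eta)}_{s,t} + R^{\hat f}_{s,t}(\cdot,\eta_s) + \bigl(\delta\hat f_{s,t}(\cdot,\eta_t) - \delta\hat f_{s,t}(\cdot,\eta_s)\bigr),
\]
and its analogue for $\bar f,\bar\eta$. The second term is handled directly by $\bk{\E_\bullet R^{\hat f} - \E_\bullet \bar R^{\hat{\bar f}}}_{\kappa+\kappa';m}$ (at the argument $\eta_s$) plus a MVT piece of size $\|\eta_s-\bar\eta_s\|'_p\wedge 1$. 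The third term has order $(t-s)^{\beta+\beta'}$ but is linear in $\delta D_2\hat f$ and $\delta\eta$; comparing the two versions yields a $\delta D_2\hat f-\delta D_2\hat{\bar f}$ contribution times $\|\delta\eta\|'_p$ plus a $\delta D_2\hat{\bar f}$ piece times $\|\delta\eta-\delta\bar\eta\|'_p$. The delicate term is the first one: using the Taylor expansion along $L_p$ together with \cref{lem.lawDg} one writes
\[
R^{\hat f_s(\cdot,\eta)}_{s,t} = \int_0^1\!\bigl[D_2\hat f_s(\cdot,\eta_s+\theta\delta\eta_{s,t})-D_2\hat f_s(\cdot,\eta_s)\bigr][\delta\eta_{s,t}]\,d\theta + D_2\hat f_s(\cdot,\eta_s)[(\E\otimes\E')(R^\eta_{s,t}\mid\mathcal F_s\otimes\mathcal F'_s)],
\]
and similarly for $\bar f,\bar\eta$. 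Pairwise subtraction, using $\C^{\gamma-1}_{b,p,q}$-H\"older regularity of $D_2\hat f$ and $D_2\hat f-D_2\hat{\bar f}$, produces the three pieces on the right-hand side of \eqref{E_R_f_Y}: a $|t-s|^{\kappa+\kappa'}(F+\|\eta_s-\bar\eta_s\|'_p\wedge 1)$ piece, a $|t-s|^{\kappa'}\|\delta\eta-\delta\bar\eta\|'_p$ piece coming from the integral, and the conditional expectation $\|\E_s\otimes\E'_s(R^\eta-\bar R^{\bar\eta})\|'_p$ piece coming from the second summand (together with a bounded piece controlled by $M$). Taking $\E_s$ and assembling the three pieces of \eqref{id.Rg} gives the claim. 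The bookkeeping of which factor is truncated at $1$ and which appears linearly has to be done carefully, but relies only on the elementary bounds $|ab-\bar a\bar b|\le |a-\bar a||b|+|\bar a||b-\bar b|$.
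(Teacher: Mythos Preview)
Your overall scheme matches the paper's, but there is a systematic error in the \emph{direction} of your telescoping that causes several steps to fail under the stated hypotheses.

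The assumptions of the lemma are deliberately asymmetric: $(\hat f,\hat f')$ takes values in $\C^\gamma_{b,p,q}$ and $(D\hat f,D\hat f')$ is itself a controlled vector field, whereas $(\hat{\bar f},\hat{\bar f}')$ is only assumed to lie in $\C^{\gamma-1}_{b,p,q}$ with no controlled structure whatsoever on $D\hat{\bar f}$. In addition, the implied constants are not allowed to depend on any $\bar f$-norm. Your decomposition for \eqref{sup_f_eta},
\[
f\comp\eta_t - \bar f\comp\bar\eta_t = (\hat f_t - \hat{\bar f}_t)(\cdot,\eta_t) + \bigl[\hat{\bar f}_t(\cdot,\eta_t) - \hat{\bar f}_t(\cdot,\bar\eta_t)\bigr],
\]
places the mean-value step on $\hat{\bar f}$, and you explicitly invoke ``$\C^\gamma$-regularity of $\hat{\bar f}_t$'', which is not available. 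To bound the second bracket in $|\cdot|_{\gamma-1}$ by a multiple of $\|\eta_t-\bar\eta_t\|'_p\wedge 1$ you genuinely need one more derivative than $\bar f$ has. The paper telescopes in the opposite order,
\[
f\comp\eta_t - \bar f\comp\bar\eta_t = (\hat f_t - \hat{\bar f}_t)(\cdot,\bar\eta_t) + \bigl[\hat f_t(\cdot,\eta_t) - \hat f_t(\cdot,\bar\eta_t)\bigr],
\]
so that the MVT is applied to $\hat f\in\C^\gamma$. The same issue recurs in your treatment of the $R^{\hat f}$-term in \eqref{E_R_f_Y}: decomposing with the difference evaluated at $\eta_s$ leaves you to control $\bar R^{\hat{\bar f}}_{s,t}(\cdot,\eta_s)-\bar R^{\hat{\bar f}}_{s,t}(\cdot,\bar\eta_s)$, which would require $\bk{\E_\bullet R^{D_2\hat{\bar f}}}_{\beta+\beta';\infty}$, i.e.\ controlled structure on $D\hat{\bar f}$ that is not assumed. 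The paper instead bounds $R^{\hat f}_{s,t}(\cdot,\eta_s)-R^{\hat f}_{s,t}(\cdot,\bar\eta_s)$ using $\bk{\E_\bullet R^{D_2\hat f}}_{\beta+\beta';\infty}$, which is part of the hypothesis $(D\hat f,D\hat f')\in\mathbf D^{\beta,\beta'}_XL_{m,\infty}\C^{\gamma-1}_{b,p,q}$.

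The fix is easy but must be applied uniformly: always split so that the difference $\hat f-\hat{\bar f}$ is evaluated at the $\bar\eta$-argument, and the change of argument $\eta\rightsquigarrow\bar\eta$ is always carried out on $\hat f$ (and its derivatives). With that correction your sketch coincides with the paper's proof.
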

 Before presenting the proof, we state an important consequence of \cref{lem.compose2} which is useful for the proofs of the main results.
\begin{corollary}\label{cor.expo}
	Let \(\gamma,\alpha,\beta,\beta',\kappa,\kappa'\),  \(X,\bar X\), $(f,f')$ and $(\bar f, \bar f')$ be as in \cref{lem.compose2}. 
	%
	Let   $(\zeta,\zeta')$ and $(\bar \zeta,\bar \zeta')$ be stochastic controlled processes in \( \mathbf D_{X}^{\beta,\beta'}L_{p}([0,T],\W;\bar W)\) and $\mathbf D_{\bar X}^{\beta,\beta'}L_{p}([0,T],\W;\bar W)$ respectively.
	Then
	\begin{align*}
		\|(f^\zeta-\bar f^{\bar \zeta},(f^\zeta)'-(\bar f^{\bar \zeta})')\|_{\gamma-1;m}\lesssim\|(f-\bar f,f'-\bar f')\|_{\gamma-1;m}+\sup_{t\in[0,T]}(\|\zeta_t-\bar \zeta_t\|_p+\|\zeta'_t-\bar \zeta'_t\|_p)
	\end{align*}
	and
	\begin{align*}
		\bk{f^\zeta,(f^\zeta)';\bar f^{\bar \zeta},(\bar f^{\bar \zeta})'}_{X,\bar X;\kappa,\kappa';m}
		&\les \bk{(f,f';\bar f,\bar f')}_{X,\bar X;\kappa,\kappa';m}+\|(f-\bar f,f'-\bar f')\|_{\gamma-1;m}
		\\&\quad+ \|\zeta,\zeta';\bar \zeta,\bar \zeta'\|_{X,\bar X;\beta,\beta';p}\,.
	\end{align*}
\end{corollary}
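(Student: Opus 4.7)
The approach is to reduce the corollary to \cref{lem.compose2} via the identity \eqref{id.expiscomp}, namely $(f^\zeta, (f^\zeta)') = (f \comp \slashed\zeta, (f \comp \slashed\zeta)')$ and its analogue for $(\bar f^{\bar\zeta}, (\bar f^{\bar\zeta})')$. Setting $\eta := \slashed\zeta$ and $\bar\eta := \slashed{\bar\zeta}$, I would regard these as stochastic controlled rough paths on $\W \otimes \W'$ that depend only on the $\omega'$-coordinate. Thanks to this trivial $\omega$-dependence together with the measure-preserving property of $\pi$, all the quantities entering hypothesis \eqref{con.YboundedM} become deterministic and reduce to the $L_p$-based rough-path norms of $\zeta, \bar\zeta$; for instance,
\[
\|\slashed\zeta_t - \slashed{\bar\zeta}_t\|'_p = \|\zeta_t - \bar\zeta_t\|_p, \qquad \|\delta\slashed\zeta_{s,t} - \delta\slashed{\bar\zeta}_{s,t}\|'_p = \|\delta\zeta_{s,t} - \delta\bar\zeta_{s,t}\|_p,
\]
and the commutation identity $(\E\otimes\E')_s \slashed Y = \slashed{\E_s Y}$ for $\cff_t$-adapted $Y$ yields
\[
\bigl\|(\E\otimes\E')_s\bigl(R^{\slashed\zeta}_{s,t} - \bar R^{\slashed{\bar\zeta}}_{s,t}\bigr)\bigr\|'_p = \bigl\|\E_s R^\zeta_{s,t} - \E_s \bar R^{\bar\zeta}_{s,t}\bigr\|_p.
\]
In particular, \eqref{con.YboundedM} holds for $\eta, \bar\eta$ with a constant $M$ controlled by the $L_p$-rough-path norms of $\zeta, \bar\zeta$, which are finite by assumption.

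Granted this reduction, the first estimate of the corollary follows by taking the supremum in $t \in [0, T]$ of \eqref{sup_f_eta} and \eqref{sup_f_eta_prime} applied to $\eta, \bar\eta$, and re-expressing each primed norm via the identifications above. For the second estimate, I would invoke the four temporal bounds \eqref{delta_f_Y}, \eqref{delta_Df_Y}, \eqref{delta_f_Y_prime}, \eqref{E_R_f_Y} with the exponents $\kappa, \kappa'$ of the corollary. A convenient observation is that, by the tower property,
\[
\bk{Z}_{\kappa;m,m} = \sup_{(s,t)\in\Delta}\frac{\bigl\|\sup_x|Z_{s,t}(x)|\bigr\|_m}{(t-s)^\kappa}\,,
\]
so after division by the appropriate power of $(t-s)$ the left-hand sides of those four lemma estimates produce exactly the four summands composing $\bk{f^\zeta, (f^\zeta)'; \bar f^{\bar\zeta}, (\bar f^{\bar\zeta})'}_{X, \bar X; \kappa, \kappa'; m}$. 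On the right-hand sides, the primed norms collapse through the identifications above into the three constituents of $\|\zeta, \zeta'; \bar\zeta, \bar\zeta'\|_{X, \bar X; \beta, \beta'; p}$, while the remaining terms assemble into $\bk{(f, f'; \bar f, \bar f')}_{X, \bar X; \kappa, \kappa'; m} + \|(f - \bar f, f' - \bar f')\|_{\gamma-1; m}$.

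The only nontrivial technical ingredient is the commutation identity for slashed conditional expectations, which is a direct consequence of the measure-preserving hypothesis on $\pi$ combined with the compatibility $\slashed{\cff_t} \subset \cff'_t$; the analogue of this identity already featured implicitly in \cref{lem.lawDg}. Once it is granted, the remainder of the argument is pure bookkeeping of norms and a termwise application of \cref{lem.compose2}.
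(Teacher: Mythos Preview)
Your proposal is correct and follows exactly the route the paper takes: the paper's own proof consists of two sentences stating that the result is a direct consequence of \cref{lem.compose2} together with the relation \eqref{id.expiscomp}, and that condition \eqref{con.YboundedM} is verified by the assumed regularity of $(\zeta,\zeta')$ and $(\bar\zeta,\bar\zeta')$. You have simply unpacked this, correctly identifying that $\eta=\slashed\zeta$ is constant in $\omega$ so that the mixed norms on the right of \eqref{sup_f_eta}--\eqref{E_R_f_Y} collapse to the $L_p$-controlled-path norms of $\zeta,\bar\zeta$, and you have flagged the one genuinely nontrivial point (the commutation of slashing with conditional expectation) that the paper leaves implicit.
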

\begin{proof}
	This result is a direct consequence of \cref{lem.compose2} and the relation \eqref{id.expiscomp}. Note that condition \eqref{con.YboundedM} is verified by the assumed properties of $(\zeta,\zeta')$ and $(\bar \zeta,\bar \zeta')$.
\end{proof}

\begin{proof}[Proof of \cref{lem.compose2}]
	In this current proof,
	we follow the functional-analytic standpoint laid out in \cref{def.scvec}. That is to say, given $(t,\omega,\xi)\in [0,T]\times\Omega\times L_q'$ 
	we write \( \hat f_t(\omega,\xi)\) for \([y\mapsto \hat f_t(\omega,y,\xi)] \)
	and regard \( f \) as a stochastic process taking values in $\mathcal Y=(\mathcal B_b(W),|\cdot|_{\infty})$. We adopt a similar view for \( \hat f' \)
	and introduce similarly the (vector-valued) stochastic processes
	\[
	Z_t(\omega)=[y\mapsto f\comp\eta_t(\omega,y)], \quad
	Z'_t(\omega)=[y\mapsto(f\comp\eta_t)'(\omega,y)].
	\]
	The pair \(  (\bar Z,\bar Z')\) is defined accordingly. 
	For computational ease, we will make use the shorthands 
	$g=f-\bar f$, \( \zeta=\eta-\bar \eta \) and \( \zeta'=\eta'-\bar \eta'\).
	The following elementary estimates will be used implicitly:
	for every non-negative numbers $a,b,c$, it holds
	$(a+b)\wedge1\le a\wedge1+b\wedge1$ and $(ab)\wedge c\le (a+c)(b\wedge1)$.
	\smallskip
	
	The first two bounds
	are obtained as direct consequences of the mean-value theorem and the assumed regularity on $f,f'$. Indeed,
	let us assume without loss of generality that $\gamma\le 3$.
	We write (using \eqref{id.mvt})
	\begin{align}\label{tmp.mvtebe}
		Z_t-\bar Z_t
		=(g\comp\bar \eta)_t+[(f\comp \eta)_t-(f\comp \bar \eta)_t]
		=(g\comp \bar \eta)_t+ \int_0^1D_2\hat f_t(\theta \eta_t+(1-\theta)\bar \eta_t)[\zeta_t]d\theta
	\end{align}
	and
	\begin{align}
		&Z'_t-\bar Z'_t
		=(g\comp \bar\eta)'_t+[(f\comp \eta)'_t-(f\comp \bar \eta)'_t]
		\nonumber
		\\&=(g\comp \bar\eta)'_t+ \int_0^1\frac d{d \theta}\left(D_2\hat f_t(\theta \eta_t+(1- \theta)\bar \eta_t)[\theta \eta'_t+(1- \theta)\bar \eta'_t]+\hat f_t'(\theta \eta_t+(1- \theta)\bar \eta_t)\right)d \theta
		\nonumber\\
		&=(g\comp \bar\eta)'_t+\int_0^1 D^2_2\hat f_t(\theta \eta_t+(1- \theta)\bar \eta_t)[\theta \eta'_t+(1- \theta)\bar \eta'_t,\zeta_t]d \theta
		\nonumber\\&\quad
		+\int_0^1 (D_2\hat f_t+D_2\hat f'_t)(\theta \eta_t+(1- \theta)\bar \eta_t)[\zeta'_t]d \theta.\label{MVT2}
	\end{align}
	From \eqref{tmp.mvtebe},
	 using boundedness of $D_2\hat f_t,\hat f_t$ 
	we obtain that
	\begin{align*}
		|Z_t-\bar Z_t|_{\gamma-1}\le
		|\hat g_t|_{\C^{\gamma-1}_{b,p,q}}+
		2\||\hat f_t|_{\C^{\gamma}_{b,p,q}}\|_\infty (\|\zeta_t\|_p'\wedge1).
	\end{align*}
	This implies \eqref{sup_f_eta}. Estimate \eqref{sup_f_eta_prime} is obtained analogously from \eqref{MVT2} and details have been omitted.
	\smallskip

	\textit{Step 1: We show \eqref{delta_f_Y}-\eqref{delta_Df_Y}.}	
	From \eqref{tmp.mvtebe}, we have
	\begin{align*}
		\delta  Z_{s,t}-\delta \bar Z_{s,t}
		= 
		\delta(g\comp \bar\eta)_{s,t}+\delta(f\comp \eta- f\comp\bar \eta)_{s,t}
	\end{align*}
	and	
	\begin{equation*}
		\begin{aligned}
			\delta(f\comp \eta- f\comp\bar \eta)_{s,t}
			&=
			\int_0^1 D_2\hat f_t(\theta \eta_t+(1- \theta)\bar \eta_t)[\delta \zeta_{s,t}] d \theta
			+\int_0^1 \delta D_2\hat f_{s,t}( \theta \eta_t+(1- \theta)\bar \eta_t)[\zeta_s]d \theta
			\\&\quad+\int_0^1[ D_2\hat f_s(\theta \eta_t+(1- \theta)\bar \eta_t)- D_2\hat f_s(\theta \eta_s+(1- \theta)\bar \eta_s)][\zeta_s] d \theta\,.
		\end{aligned}
	\end{equation*}
	
	Using triangle inequality and conditioning (as in the proof of \cref{prop.newcvec}), we have
	\begin{align*}
		|\delta(g\comp \bar\eta)_{s,t}|_\infty \le |\delta  \hat g_{s,t}|_\infty +|\hat g_s|_{\C^1_{b,p,q}} \|\delta \eta_{s,t}\|_p'.
	\end{align*}
	To estimate the remaining terms, we use regularity of $f$ (i.e. \eqref{con.gammamholder}, \eqref{est.normDkg} and \eqref{est.normdgammag}). We obtain \( \mathbb P \)-almost surely that
	\begin{align*}
		|\delta(f\comp \eta- f\comp\bar \eta)_{s,t}|_\infty 
		\le |D_2\hat f_t|_\infty \|\delta \zeta_{s,t}\|'_p
		+\left(|\delta D\hat f_{s,t}|_{\infty}
		+|D_2\hat f_s|_{\C^1_{b,p,q}}(\|\delta \eta_{s,t}\|'_{p}
		+\|\delta \bar \eta_{s,t}\|'_{p})\right)\|\zeta_s\|'_p\,.
	\end{align*}	
	On the other hand, it is obvious that 
	$|\delta(f\comp \eta- f\comp\bar \eta)_{s,t}|_\infty\lesssim|\delta \hat f_{s,t}|_\infty+\|\delta \eta_{s,t}\|'_p+\|\delta \bar\eta_{s,t}\|'_p $, and hence,
	\begin{align*}
		|\delta(f\comp \eta- f\comp\bar \eta)_{s,t}|_\infty 
		\lesssim\|\delta \zeta_{s,t}\|'_p
		+\left(|\delta \hat f_{s,t}|_{\infty}+|\delta D\hat f_{s,t}|_{\infty}
		+\|\delta \eta_{s,t}\|'_{p}
		+\|\delta \bar \eta_{s,t}\|'_{p}\right) (\|\zeta_s\|'_p\wedge1)\,.
	\end{align*}	
	It follows that
	\begin{align*}
		|\delta Z_{s,t}-\delta \bar Z_{s,t}|_\infty 
		&\lesssim
		|\delta \hat g_{s,t}|_\infty +|\hat g_s|_{\C^1_{b,p,q}} \|\delta \eta_{s,t}\|_p'
		\\&\quad+
		\|\delta \zeta_{s,t}\|'_p
		+
		\left(|\delta \hat f_{s,t}|_{\infty}+|\delta D\hat f_{s,t}|_{\infty}
		+\|\delta \eta_{s,t}\|'_{p}
		+\|\delta \bar \eta_{s,t}\|'_{p}\right) (\|\zeta_s\|'_p\wedge1)\,.
	\end{align*}	
	We now take moment on both sides, using conditioning and \eqref{con.YboundedM} to obtain \eqref{delta_f_Y}.
	Similarly, we have
	\begin{multline*}
		|\delta D Z_{s,t}-\delta D \bar Z_{s,t}|_\infty 
		\lesssim
		|\delta  D_1\hat g_{s,t}|_\infty +|D_1\hat g_s|_{\C^{\gamma-2}_{b,p,q}} (\|\delta \eta_{s,t}\|_p')^{\gamma-2}+|D_2D_1\hat f_t|_\infty \|\delta \zeta_{s,t}\|'_p
		\\
		+[|\delta D_2\hat f_{s,t}|_{\infty}+|\delta D_2D_1\hat f_{s,t}|_{\infty}
		+(\|\delta \eta_{s,t}\|'_{p})^{\gamma-2}
		+(\|\delta \bar \eta_{s,t}\|'_{p})^{\gamma-2})](\|\zeta_s\|'_p\wedge1)\,,
	\end{multline*}	
	which yields  \eqref{delta_Df_Y}.\smallskip

	\textit{Step 2: We show \eqref{delta_f_Y_prime}.}
	We hinge on \eqref{MVT2}. From the identity
	\begin{align*}
		\delta(g\comp \bar \eta)'_{s,t}
		&= (\delta D_2\hat g_{s,t})(\bar \eta_t)[\bar \eta'_t]
		+D_2\hat g_s(\bar \eta_t)[\delta\bar \eta'_{s,t}]
		+(D_2\hat g_s(\bar \eta_t)-D_2\hat g_s(\bar \eta_s))[\bar \eta'_s]
		\\&\quad+(\delta \hat g'_{s,t})(\bar \eta_t)
		+\hat g'_s(\bar \eta_t)-\hat g'_s(\bar \eta_s),
	\end{align*}
	applying \eqref{con.gammamholder}, it is clear that
	\begin{align*}
		|\delta(g\comp \bar \eta)'_{s,t}|_\infty
		&\le |\delta D_2\hat g_{s,t}|_{\infty}\|\bar \eta'_t\|'_p+|D_2\hat g_s|_\infty\|\delta\bar \eta'_{s,t}\|'_p+|D_2\hat g_s|_{\C^{\gamma-2}_{b,p,q}}(\|\delta\bar \eta_{s,t}\|'_p)^{\gamma-2}\|\bar \eta'_s\|'_p
		\\&\quad+ |\delta \hat g'_{s,t}|_\infty+|\hat g'_s|_{\C^{\gamma-2}_{b,p,q}}(\|\delta \bar \eta_{s,t}\|'_p)^{\gamma-2}.
	\end{align*}
	To obtain \eqref{delta_f_Y_prime}, it suffices to show that
	\begin{multline*}
		|\delta(f\comp \eta)_{s,t}- \delta(f\comp\bar \eta)_{s,t}|_\infty
		\lesssim
		\bigg(|\delta D_2^2\hat f_{s,t}|_{\infty}+|\delta D_2\hat f_{s,t}|_\infty+|\delta\hat f'_{s,t}|_\infty
		\\
		+  (\|\delta \eta_{s,t}\|'_p+\|\delta \bar \eta_{s,t}\|'_p)^{\gamma-2}
		+ (\|\delta \eta'_{s,t}\|'_p+\|\delta\bar \eta'_{s,t}\|'_p)\bigg) (\|\zeta_s\|'_p\wedge1)
		\\
		+\left (|\delta D_2\hat f_{s,t}|_{\infty}+|\delta D_2 \hat f'_{s,t}|_{\infty}+(\|\delta \eta_{s,t}\|'_p+\|\delta\bar \eta_{s,t}\|'_p)^{\gamma-2}\right )\|\zeta'_s\|'_p
		+ \|\delta\zeta'_{s,t}\|'_p.
	\end{multline*}
	We note that			
	\begin{multline*}
		\Big|\delta\Big(\int_0^1 D^2_2\hat f(\theta \eta+(1-\theta)\bar \eta)[\theta \eta'+(1- \theta)\bar \eta',\zeta]d \theta\Big)_{s,t}\Big|_\infty 
		\\
		\lesssim 
		|\delta D_2^2\hat f_{s,t}|_{\infty}(\|\eta'_s\|'_p+\|\bar \eta'_s\|'_p)\|\zeta_s\|'_p
		+ |D_2^2\hat f_t|_{\C^{\gamma-2}_{b,p,q}} (\|\delta \eta_{s,t}\|'_p+\|\delta \bar \eta_{s,t}\|'_p)^{\gamma-2}(\|\eta'_s\|'_p+\|\bar \eta'_s\|'_p)\|\zeta_s\|'_p
		\\
		+|D_2^2\hat f_t|_\infty (\|\delta \eta'_{s,t}\|'_p+\|\delta\bar \eta'_{s,t}\|'_p)\|\zeta_s\|'_p
		+|D_2^2\hat f_t|_\infty(\|\eta'_t\|'_p+\|\bar \eta'_t\|'_p)\|\delta\zeta_{s,t}\|'_p
	\end{multline*}
	and
	\begin{align*}
		&\Big|\delta\Big(\int_0^1 (D_2\hat f+D_2\hat f')(\theta \eta+(1- \theta)\bar \eta)[\zeta']d \theta\Big)_{s,t}\Big|_\infty 
		\lesssim \left (|\delta D_2\hat f_{s,t}|_{\infty}+|\delta D_2 \hat f'_{s,t}|_{\infty}\right )\|\zeta'_s\|'_p
		\\&\quad+  \left (| D_2\hat f_{t}|_{\C^{\gamma-2}_{b,p,q}}+| D_2\hat f'_{t}|_{\C^{\gamma-2}_{b,p,q}}\right )(\|\delta \eta_{s,t}\|'_p+\|\delta\bar \eta_{s,t}\|'_p)^{\gamma-2}\|\zeta'_s\|'_p
		+\left (| D_2\hat f_{t}|_{\infty}+| D_2\hat f'_{t}|_{\infty}\right )\|\delta\zeta'_{s,t}\|'_p.
	\end{align*}
	Hence, by \eqref{MVT2} and \eqref{con.YboundedM}, we have
	\begin{multline*}
		|\delta(f\comp \eta)'_{s,t}- \delta(f\comp\bar \eta)'_{s,t}|_\infty
		\lesssim \left(|\delta D_2^2\hat f_{s,t}|_{\infty}
		+  (\|\delta \eta_{s,t}\|'_p+\|\delta \bar \eta_{s,t}\|'_p)^{\gamma-2}
		+ (\|\delta \eta'_{s,t}\|'_p+\|\delta\bar \eta'_{s,t}\|'_p)\right) \|\zeta_s\|'_p
		\\+\left (|\delta D_2\hat f_{s,t}|_{\infty}+|\delta D_2 \hat f'_{s,t}|_{\infty}+(\|\delta \eta_{s,t}\|'_p+\|\delta\bar \eta_{s,t}\|'_p)^{\gamma-2}\right )\|\zeta'_s\|'_p
		+ \|\delta\zeta'_{s,t}\|'_p.
	\end{multline*}
	On the other hand, using \eqref{con.YboundedM}, it is evident that
	\begin{align*}
		|\delta(f\comp \eta)'_{s,t}- \delta(f\comp\bar \eta)'_{s,t}|_\infty
		&\lesssim |\delta D_2\hat f_{s,t}|_\infty+|\delta\hat f'_{s,t}|_\infty
		\\&\quad+(\|\delta \eta_{s,t}\|'_p+\|\delta \bar \eta_{s,t}\|'_p)^{\gamma-2}+(\|\delta \eta'_{s,t}\|'_p+\|\delta\bar \eta'_{s,t}\|'_p).
	\end{align*}
	Combining the previous two inequalities, we obtain the claimed inequality.
	\smallskip
	
	\textit{Step 3: We show \eqref{E_R_f_Y}.} Similar to \eqref{id.Rg}, we write
	\begin{equation}\label{id.Rfeta}
		\begin{aligned}
			R^{f\comp \eta}_{s,t}
			=Tf_s(\eta_s,\eta_t) +D_2\hat f_s(\eta_s)[R^{\eta}_{s,t}]
			+R^{\hat f}_{s,t}(\eta_s)
			+(\delta\hat f_{s,t}(\eta_t)-\delta\hat f_{s,t}(\eta_s))
		\end{aligned}
	\end{equation}
	where
	\begin{align*}
		Th(\xi,\eta)=\hat h(\eta)-\hat h(\xi)-D_2\hat h(\xi)[\eta- \xi].
	\end{align*}
	We decompose $R^{\bar f\comp \bar \eta}_{s,t}$ in an analogous way.
	We estimate separately the differences of the corresponding terms on the right-hand sides of the two decompositions.
	
	We have
	\begin{align*}
		Tf_s(\eta_s,\eta_t)-T\bar f_s(\bar \eta_s,\bar \eta_t)=Tg_s(\bar \eta_s,\bar \eta_t)+Tf_s(\eta_s,\eta_t)-Tf_s(\bar \eta_s,\bar \eta_t).
	\end{align*}
	By Taylor's expansion, it is evident that
	\begin{align*}
		|Tg_s(\bar \eta_s,\bar \eta_t)|_\infty\le |\hat g_s|_{\C^{\gamma-1}_{b,p,q}}(\|\delta\bar \eta_{s,t}\|'_p)^{\gamma-1},
	\end{align*}
	and hence, using \eqref{con.YboundedM},
	\begin{align*}
		\||\E_sTg_s(\bar \eta_s,\bar \eta_t)|_\infty\|_m\le \||\hat g_s|_{\C^{\gamma-1}_{b,p,q}}\|_m(t-s)^{(\gamma-1)\beta}.
	\end{align*}
	Next, we put $\zeta^\theta=\theta \eta+(1- \theta)\bar \eta$ and apply mean value theorem to get that
	\begin{align*}
		&Tf_s(\eta_s,\eta_t)-Tf_s(\bar \eta_s,\bar \eta_t)
		=\int_0^1\frac{d}{d \theta}\left(\hat f_s(\zeta^\theta_t)-\hat f_s(\zeta^\theta_s)-D_2\hat f_s(\zeta^\theta_s)[\delta \zeta^\theta_{s,t}] \right)d \theta
		\\&=\int_0^1\left( D_2\hat f_s(\zeta^\theta_t)[\zeta_t]-D_2\hat f_s(\zeta^\theta_s)[\zeta_s]-D_2\hat f_s(\zeta^\theta_s)[\delta\zeta_{s,t}]-D_2^2\hat f_s(\zeta^\theta_s)[\zeta_s,\delta \zeta^\theta_{s,t}]\right)d \theta
		\\&=\int_0^1\left(D_2\hat f_s(\zeta^\theta_t)[\zeta_s]-D_2\hat f_s(\zeta^\theta_s)[\zeta_s]-D_2^2\hat f_s(\zeta^\theta_s)[\zeta_s,\delta \zeta^\theta_{s,t}]\right)d \theta
		\\&\quad+\int_0^1\left( D_2\hat f_s(\zeta^\theta_t)[\delta\zeta_{s,t}]-D_2\hat f_s(\zeta^\theta_s)[\delta\zeta_{s,t}]\right)d \theta.
	\end{align*}
	Using regularity of $D_2\hat f_s$, we get that
	\begin{multline*}
		|Tf_s(\eta_s,\eta_t)-Tf_s(\bar \eta_s,\bar \eta_t)|_\infty
		\les |D_2\hat f_s|_{\C^{\gamma-1}_{b,p,q}}\int_0^1\left(\|\zeta_s\|'_p(\|\delta \zeta^\theta_{s,t}\|'_p)^{\gamma-1}+\|\delta \zeta^\theta_{s,t}\|'_p\|\delta \zeta_{s,t}\|'_p \right)d \theta.
		\\\les \left((\|\delta \eta_{s,t}\|'_p)^{\gamma-1}+\|\delta \bar\eta_{s,t}\|'_p)^{\gamma-1} \right)\|\zeta_s\|'_p+\left(\|\delta \eta_{s,t}\|'_p+\|\delta\bar \eta_{s,t}\|'_p\right)\|\delta \zeta_{s,t}\|'_p.
	\end{multline*}
	On the other hand, we also have
	\begin{align*}
		|Tf_s(\eta_s,\eta_t)-Tf_s(\bar \eta_s,\bar \eta_t)|_\infty
		\les|\hat f_s|_{\C^{\gamma-1}_{b,p,q}}\left( (\|\delta \eta_{s,t}\|'_p)^{\gamma-1}+(\|\delta \bar\eta_{s,t}\|'_p)^{\gamma-1}\right).
	\end{align*}
	Combining the two estimates yields that
	\begin{align*}
		|Tf_s(\eta_s,\eta_t)-Tf_s(\bar \eta_s,\bar \eta_t)|_\infty
		&\lesssim \left((\|\delta \eta_{s,t}\|'_p)^{\gamma-1}+\|\delta \bar\eta_{s,t}\|'_p)^{\gamma-1} \right)(\|\zeta_s\|'_p\wedge1)
		\\&\quad+\left(\|\delta \eta_{s,t}\|'_p	+\|\delta\bar \eta_{s,t}\|'_p\right)\|\delta \zeta_{s,t}\|'_p.
	\end{align*}	
	Thus, using \eqref{con.YboundedM}, we have
	\begin{align*}
		\||\E_s(Tf_s(\eta_s,\eta_t)-Tf_s(\bar \eta_s,\bar \eta_t))|_\infty\|_m
		\lesssim \|\|\zeta_s\|'_p\wedge1\|_m(t-s)^{(\gamma-1)\beta}+\|\|\delta \zeta_{s,t}\|'_p\|_m(t-s)^{\beta}.
	\end{align*}
	It follows that 
	\begin{multline*}
		\||\E_s(Tf_s(\eta_s,\eta_t)-T\bar f_s(\bar \eta_s,\bar \eta_t))|_\infty\|_m
		\\\lesssim (\||\hat g_s|_{\C^{\gamma-1}_{b,p,q}}\|_m+\|\|\zeta_s\|'_p\wedge1\|_m)(t-s)^{(\gamma-1)\beta}+\|\|\delta \zeta_{s,t}\|'_p\|_m(t-s)^\beta.
	\end{multline*}
	
	For the second difference, we apply \cref{lem.lawDg} to see that
	\begin{align*}
		D_2\hat f_s(\eta_s)[R^\eta_{s,t}]-D_2\hat{\bar f}_s(\bar\eta_s)[\bar R^{\bar \eta}_{s,t}]
		&=D_2\hat f_s(\eta_s)[\E'_sR^\eta_{s,t}]-D_2\hat{\bar f}_s(\bar\eta_s)[\E'_s\bar R^{\bar \eta}_{s,t}]
		\\&=D_2\hat g_s(\bar\eta_s)[\E'_s\bar R^{\bar \eta}_{s,t}]
		+\left(D_2\hat f_s(\eta_s)[\E'_sR^\eta_{s,t}]-D_2\hat f_s(\bar\eta_s)[\E'_s\bar R^{\bar \eta}_{s,t}]\right)
	\end{align*}
	and hence using \cref{lem:operator},
	\begin{multline*}
		\E_s\left(D_2\hat f_s(\eta_s)[R^\eta_{s,t}]-D_2\hat{\bar f}_s(\bar\eta_s)[\bar R^{\bar \eta}_{s,t}]\right)
		\\=D_2\hat g_s(\bar\eta_s)[\E_s\E'_s\bar R^{\bar \eta}_{s,t}]
		+\left(D_2\hat f_s(\eta_s)[\E_s\E'_sR^\eta_{s,t}]-D_2\hat f_s(\bar\eta_s)[\E_s\E'_s\bar R^{\bar \eta}_{s,t}]\right).
	\end{multline*}
	Consequently, the regularity of $f,\bar f$ and \cref{lem:prod} imply
	\begin{multline*}
		|\E_s\left(D_2\hat f_s(\eta_s)[R^\eta_{s,t}]-D_2\hat{\bar f}_s(\bar\eta_s)[\bar R^{\bar \eta}_{s,t}]\right)|_\infty
		\lesssim |D_2\hat g_s|_\infty\|(\E\otimes\E')_s\bar R^{\bar \eta}_{s,t}\|'_p
		\\+|D_2\hat f_s|_{\C^1_{b,p,q}}(\|\zeta_s\|'_p\wedge1)\|(\E\otimes\E')_s\bar R^{\bar \eta}_{s,t}\|'_p+|D_2\hat f_s|_\infty\|(\E\otimes\E')_s(R^\eta_{s,t}-\bar R^{\bar \eta}_{s,t})\|'_p.
	\end{multline*}
	Taking into account the condition \eqref{con.YboundedM}, we have
	\begin{multline*}
		\||\E_s(D_2\hat f_s(\eta_s)[R^\eta_{s,t}]-D_2\hat{\bar f}_s(\bar\eta_s)[\bar R^{\bar \eta}_{s,t}])|_\infty\|_m
		\\\lesssim (\||D_2\hat g_s|_\infty \|_m+\|\|\zeta_s\|'_p\wedge1\|_m)(t-s)^{\beta+\beta'}+\|\|(\E\otimes\E')_s(R^\eta_{s,t}-\bar R^{\bar \eta}_{s,t})\|'_p\|_m.
	\end{multline*}
	
	For the third difference, we have
	\begin{align*}
		R^{\hat f}_{s,t}(\eta_s)-R^{\hat{\bar f}}_{s,t}(\bar \eta_s)
		&=(R^{\hat f}_{s,t}-R^{\hat{\bar f}}_{s,t})(\bar\eta_s)
		+R^{\hat f}_{s,t}(\eta_s) -R^{\hat f}_{s,t}(\bar \eta_s).
	\end{align*}
	We note that 
	\begin{align*}
		|\E_s(R^{\hat f}_{s,t}(\eta_s) -R^{\hat f}_{s,t}(\bar \eta_s))|\le 2\bk{\E_\bigcdot R^{\hat f}}_{2\beta;\infty} (t-s)^{2\beta}
	\end{align*}
	and by the mean value theorem that
	\begin{align*}
		|\E_s(R^{\hat f}_{s,t}(\eta_s) -R^{\hat f}_{s,t}(\bar \eta_s))|
		&=\Big|\int_0^1R^{D_2\hat f}_{s,t}(\theta \eta_s+(1- \theta)\bar \eta_s)[\zeta_s]d \theta\Big|
		\\&\le \bk{\E_\bigcdot R^{D_2\hat f}}_{\beta+\beta';\infty}
		(t-s)^{\beta+\beta'}\|\zeta_s\|'_p.
	\end{align*}
	Combining the previous inequalities, we obtain that
	\begin{align*}
		\||\E_s(R^{\hat f}_{s,t}(\eta_s)-R^{{\bar f}}_{s,t}(\bar \eta_s))|_\infty \|_m
		\lesssim \||\E_s(R^{\hat f}_{s,t}-R^{{\bar f}}_{s,t})|_\infty \|_m
		+\|\|\zeta_s\|'_p\wedge1\|_m(t-s)^{\beta+\beta\wedge \beta'}.
	\end{align*}
	
	For the last difference, we write
	\begin{multline*}
		\delta\hat f_{s,t}(\eta_t)-\delta\hat f_{s,t}(\eta_s)
		-(\delta \hat{\bar f}_{s,t}(\bar \eta_t)-\delta \hat{\bar f}_{s,t}(\bar \eta_s))
		\\=\delta\hat g_{s,t}(\bar\eta_t)-\delta\hat g_{s,t}(\bar \eta_s)
		+\left[\delta\hat f_{s,t}(\eta_t)-\delta\hat f_{s,t}(\eta_s)
		-(\delta \hat{f}_{s,t}(\bar \eta_t)-\delta \hat{f}_{s,t}(\bar \eta_s))\right].
	\end{multline*}
	Using the elementary estimate
	\begin{align*}
		|h(a)-h(b)-h(c)+h(d)|&\le |Dh|_{\C^1_{b,p,q}}(\|a-b\|'_p+\|c-d\|'_p)(\|b-d\|'_p\wedge1)
		\\&\quad+|Dh|_{\infty}\|a-b-c+d\|'_p
	\end{align*}
	we have
	\begin{multline*}
		|\delta\hat f_{s,t}(\eta_t)-\delta\hat f_{s,t}(\eta_s)
		-(\delta \hat{\bar f}_{s,t}(\bar \eta_t)-\delta \hat{\bar f}_{s,t}(\bar \eta_s))|_\infty
		\\\lesssim |\delta\hat g_{s,t}|_{\C^1_{b,p,q}}\|\delta \bar \eta_{s,t}\|'_p+|\delta D\hat f_{s,t}|_{\C^1_{b,p,q}}(\|\delta \eta_{s,t}\|'_p+\|\delta\bar \eta_{s,t}\|'_p)(\|\zeta_s\|'_p\wedge1)+|\delta D\hat f_{s,t}|_{\infty}\|\delta \zeta_{s,t}\|'_p.
	\end{multline*}
	Taking into account the regularity of $f$ and \eqref{con.YboundedM}, we obtain that
	\begin{multline*}
		\||\E_s[\delta\hat f_{s,t}(\eta_t)-\delta\hat f_{s,t}(\eta_s)
		-(\delta \hat{\bar f}_{s,t}(\bar \eta_t)-\delta \hat{\bar f}_{s,t}(\bar \eta_s))]|_\infty\|_m
		\\\lesssim \||\delta\hat g_{s,t}|_{\C^1_{b,p,q}}\|_m(t-s)^\beta +\|\|\zeta_s\|'_p\wedge1\|_m(t-s)^{\beta'+\beta}+\|\|\delta \zeta_{s,t}\|'_p\|_m(t-s)^{\beta}.
	\end{multline*}
	
	Summing up the estimates for all the differences, we obtain \eqref{E_R_f_Y}.
\end{proof}

\section{Proof of the main results} 
\label{sec:proofs}
\subsection{Solution theory of MKV equations with rough common noise}
\label{sec:proofs_MV_wp}
The proof of \cref{thm.MV.wellposed} follows a standard two-step argument which allows direct applications of well-posedness and stability results for rough stochastic differential equation \eqref{eq:RSDE} (\cref{thm.fixpoint,thm.stability_precise}). First, we solve the former equation in \eqref{eqn.MVrsde} for each stochastic controlled rough path $(\eta,\eta')$. Without the restriction $(\eta,\eta')=(Y,\hat f(Y,\slashed Y))$, equation \eqref{eqn.MVrsde} is an instance of \eqref{eq:RSDE}. Then, we show that the map which assigns each $(\eta,\eta')$ to the corresponding solution obtained in the previous step is  Lipschitz (more specifically, contractive for \( T\ll1\)). This allows us to apply the Banach fixed-point theorem to obtain a unique solution to \eqref{eqn.MVrsde}.
\ref{sec:proofs_MV_wp}
\begin{proof}[\bf Proof of \cref{thm.MV.wellposed}]
	We choose and fix $\beta,\beta'\in(0,\alpha)$ such that $\beta>\frac13\vee\frac1 \gamma$ while $\beta'<\beta\wedge[(\gamma-2)\beta]$ and $2 \beta+\beta'>1$.
	We first construct the solution on a small time interval $[0,\bar T]$, then later extend it to $[0,T]$.
	Since $|\X|_{\beta;[0,\bar T]}\le (\bar T+\bar T^2)^{\alpha- \beta}|\X|_{\alpha;[0,T]}$, we can choose $\bar T$ sufficiently small so that $|\X|_{\beta;[0,\bar T]}+\bar T\le 1$.
	For each $M>0$, we define $\mathbf{B}_{\bar T,M}$ as the collection of processes $(\eta,\eta')$ in $\mathbf{D}^{\beta,\beta'}_XL_m([0,\bar T],\W;W)$ such that $\eta_0=\xi, \eta'_0=\hat f_0(\xi,\slashed \xi)$, 
	\begin{equation}\label{con.ball1}
		\sup_{t\in[0,\bar T]}\|\eta'_t\|_\infty\vee\|\delta \eta'\|_{\beta';m}\le\||\hat f|_1\|_\infty
		\tand
		\|\delta \eta\|_{\beta;m}\vee\|\E_{\bigcdot} R^\eta\|_{\beta+\beta';m}\le 1\,.
	\end{equation}
	It is easy to see that for $\bar T$ sufficiently small and $M$ sufficiently large, the set $\mathbf{B}_{\bar T,M}$ contains the process $(\omega;t)\mapsto (\xi(\omega)+\hat f_0(\xi(\omega),\slashed \xi)\delta X_{0,t},\hat f_0(\xi(\omega),\slashed\xi))$, and hence, is non-empty.
	
	For each $(\eta,\eta')$ in $\mathbf{B}_{\bar T,M}$, define $(f^\eta,(f^\eta)')$ according to \eqref{def.feta} so that
	\begin{align*}
		Df^\eta_t(\omega,y)=D_1\hat f_t(\omega,y,\slashed \eta_t)
		\tand (Df^\eta)'_t(\omega,y)=D_2D_1\hat f_t(\omega,y,\slashed\eta_t)[\slashed \eta'_t]+D_1\hat f'_t(\omega,y,\slashed \eta_t).
	\end{align*}
	Similarly, define $b^\eta_t(\omega,y)=\hat b_t(\omega,y,\slashed \eta_t)$ and $\sigma^\eta_t(\omega,y)=\hat \sigma_t(\omega,y,\slashed \eta_t)$. 
	Since $D_2D_1\hat f=D_1D_2\hat f$, we have $(Df^\eta)'=D((f^\eta)')$.
	By \cref{prop.newcvec} and \cref{assume.regularity}, we see that $(f^\eta,(f^\eta)')$ and $(Df^\eta,(Df^\eta)')$ are stochastic  controlled vector fields in $\mathbf{D}^{\beta,\beta'}_XL_{m,\infty}\C^\gamma_b$ and $\mathbf{D}^{\beta,\beta'}_XL_{m,\infty}\C^{\gamma-1}_b$ respectively. 
	Furthermore, from the estimate in \cref{prop.newcvec} and \eqref{con.ball1}, we see that 
	\begin{multline}\label{est.unif.coef}
		\sup_{M}\sup_{(\eta,\eta')\in \mathbf{B}_{\bar T,M}}\Big(\||\sigma^\eta|_\infty\|_\infty +\||b^\eta|_\infty\|_\infty
		\\+ \|(f^\eta,(f^\eta)')\|_{\gamma;\infty}+\bk{(f^\eta,(f^\eta)')}_{X;\beta,\beta';m,\infty}+\bk{(Df^\eta,(Df^\eta)')}_{X;\beta,\beta';m,\infty}\Big)<\infty.
	\end{multline}
	
	Consider the rough stochastic differential equation
	\begin{equation}\label{eqn.Yeta}
		dY_t=b^\eta_t(Y_t)dt+\sigma^\eta_t(Y)dB_t+(f^\eta_t,(f^\eta_t)')(Y_t)d\X_t, \quad Y_0=\xi.
	\end{equation}
	Applying \cref{thm.fixpoint} and taking into account \eqref{est.unif.coef}, we see that there is a unique  solution $Y^\eta$ to \eqref{eqn.Yeta} which is $L_{m,\infty}$-integrable.
	Additionally, from  \cref{prop.apri} and \eqref{est.unif.coef}, we have 
	\begin{align}\label{est.unif.Ymu}
		\sup_{M}\sup_{(\eta,\eta')\in \mathbf{B}_{\bar T,M} }\Big(\|\delta Y^\eta\|_{\alpha;m,\infty;[0,\bar T]}+\|\E_\bigcdot R^{Y^\eta}\|_{\alpha+\beta;\infty;[0,\bar T]}\Big)<\infty.
	\end{align}
	Since $\alpha>\beta> \beta'$, the above estimate implies that $(Y^\eta,f^\eta(Y^\eta))$ belongs to $\mathbf{B}_{\bar T,M}$ for $\bar T$ sufficiently small and $M$ sufficiently large.
	
	Define $\Psi(\eta,\eta')=(Y^\eta,f^\eta(Y^\eta))$. The previous argument shows that $\Psi$ maps $\mathbf{B}_{\bar T,M}$ into $\mathbf{B}_{\bar T,M}\cap \mathbf{D}^{2 \beta}_XL_{m,\infty}$ when $\bar T$ is sufficiently small and $M$ is sufficiently large. We show that $\Psi$ is a contraction with respect to the metric \( (Y,Y';\bar Y,\bar Y')\mapsto\|Y,Y';\bar Y,\bar Y'\|_{X;\beta,\beta';m} \) introduced in \eqref{def.scrp.metricd}.
	
	For $(\eta,\eta'),(\zeta,\zeta')$ in $\mathbf{B}_{\bar T,M}$,
	it follows from \cref{thm.stability_precise} and \eqref{est.unif.coef} that
	\begin{multline}
		\label{preliminary_est_Y_diff}
		\|\delta Y^\eta- \delta Y^\zeta\|_{\alpha;m}+\|\delta f^\eta(Y^\eta)- \delta f^\zeta(Y^\zeta)\|_{\beta;m}+\|\E_\bigcdot R^{Y^\eta}-\E_\bigcdot R^{Y^\zeta}\|_{\alpha+\beta;m}
		\\
		\lesssim \sup\nolimits_{t\in[0,\bar T]}\||b^\eta_t-b^\zeta_t|_\infty\|_\infty
		+\sup\nolimits_{t\in[0,\bar T]}\||\sigma^\eta_t-\sigma^\zeta_t|_\infty\|_\infty
		\\
		\|(f^\eta-f^\zeta;(f^\eta)'-(f^\zeta)')\|_{\gamma-1;m}+\bk{f^\eta,(f^\eta)';f^\zeta,(f^\zeta)'}_{X;\beta,\beta';m}.
	\end{multline}
	We then estimate each term on the right-hand side above.
	By \cref{assume.regularity}\ref{item.reg.bs}, we have that
	\begin{align*}
		|b^\eta_t-b^\zeta_t|_\infty+|\sigma^\eta_t-\sigma^\zeta_t|_\infty \le (|\hat b|_1+|\hat \sigma|_1)\|\eta_t- \zeta_t\|_m.
	\end{align*}
	Moreover, \cref{cor.expo} yields that
	\begin{align*}
		\|(f^\eta-f^\zeta,(f^\eta)'-(f^\zeta)')\|_{\gamma-1;m}+\bk{f^\eta,(f^\eta)';f^\zeta,(f^\zeta)'}_{X;\beta,\beta';m}\lesssim \|\eta,\eta';\zeta,\zeta'\|_{X;\beta,\beta';m}.
	\end{align*}
	Plugging these estimates in \eqref{preliminary_est_Y_diff}, we have
	\begin{multline}\label{equ:estimateYmunu}
		\|\delta Y^\eta- \delta Y^\zeta\|_{\alpha;m}+\|\delta f^\eta(Y^\eta)- \delta f^\zeta(Y^\zeta)\|_{\beta;m}+\|\E_\bigcdot R^{Y^\eta}-\E_\bigcdot R^{Y^\zeta}\|_{\alpha+\beta;m}
		\\\le C \|\eta,\eta';\zeta,\zeta'\|_{X;\beta,\beta';m}
	\end{multline}
	for some finite constant $C$. 
	Since  $\alpha>\beta> \beta'$, there is a constant $o_{\bar T}(1)$ such that $\lim_{\bar T\downarrow0}o_{\bar T}(1)=0$ and
	\begin{multline*}
		\|\Psi(\eta,\eta');\Psi(\zeta,\zeta')\|_{X;\beta,\beta';m}
		\\
		\le o_{\bar T}(1)\Big(\|\delta Y^\eta- \delta Y^\zeta\|_{\alpha;m}+\|\delta f^\eta(Y^\eta)- \delta f^\zeta(Y^\zeta)\|_{\beta;m}+\|\E_\bigcdot R^{Y^\eta}-\E_\bigcdot R^{Y^\zeta}\|_{\alpha+\beta;m}\Big).
	\end{multline*}
	Hence, we can choose $\bar T$ sufficiently small so that
	\begin{align*}
        \|\Psi(\eta,\eta');\Psi(\zeta,\zeta')\|_{X;\beta,\beta';m}\le\frac12\|\eta,\eta';\zeta,\zeta'\|_{X;\beta,\beta';m}
    \end{align*}
	for every $(\eta,\eta'),(\zeta,\zeta')$ in $\mathbf{B}_{\bar T,M}$. In other words, $\Psi$ is a contraction on the (complete) metric space $(\mathbf{B}_{\bar T,M},\|\cdot;\cdot\|_{X;\beta,\beta';m})$.
	
	Let $Y$ be the fixed point of $\Psi$.  This means that $Y$ is an $L_{m,\infty}$-integrable solution to \eqref{eqn.Yeta} with $\eta=Y$, $\eta'=f^\eta(Y)=\hat f(Y,\slashed Y)=Y'$. Therefore, $Y$  is a $L_{m,\infty}$-integrable solution to \eqref{eqn.MVrsde}  (and equivalently to \eqref{eqn.MV}) on $[0,\bar T]$. 
	
	Since the smallness of $\bar T$ depends only on the controlling quantities of the coefficients, not on the starting position, the previous argument can be repeated to construct a solution on arbitrary finite time intervals.
	Sample path continuity is evident because $Y^\eta$ is a.s.\ continuous for any $(\eta,\eta')$ in $\mathbf{B}_{\bar T,M}$. 
	
	Lastly, we show uniqueness among $L_{m,\infty}$-solutions. Let $\bar Y$ be a $L_{m,\infty}$-integrable solution to \eqref{eqn.MV}. Then $\bar Y$ is a $L_{m,\infty}$-integrable solution to \eqref{eqn.Yeta} with $(\eta,\eta')=(\bar Y,\hat f(\bar Y,{\slashed {\bar Y}}))$. 
	The estimate \eqref{est.defJMV} and boundedness of the coefficients imply that
	\begin{align*}
		\|\delta \bar Y\|_{\alpha;m,\infty;[0,T]}+\|\E_\bigcdot R^{\bar Y}\|_{2 \alpha;\infty}<\infty.
	\end{align*}
	Using the regularity of $f$, we have
	\begin{align*}
		|\hat f_t(\bar Y_t,\slashed{\bar Y_t})-\hat f_s(\bar Y_s,\slashed{\bar Y_s})|\lesssim \sup_{(y,\xi)\in W\times  L_q(W)}|\delta \hat f_{s,t}(y,\xi)|+ |\delta Y_{s,t}|+\|\delta Y_{s,t}\|_m
	\end{align*}
	which implies that $\|\delta \hat f(\bar Y,\slashed{\bar Y})\|_{\alpha;m,\infty}$ is finite.
	We obtain from the above that $(\eta,\eta')= (\bar Y,\hat f(\bar Y,\slashed{\bar Y}))$ satisfies \eqref{con.eta1new}. Applying \cref{prop.newcvec}, we see that
	\begin{equation*}
		|\sigma^{\bar Y}|_\infty+|b^{\bar Y}|_\infty
		+ \|(f^{\bar Y},(f^{\bar Y})')\|_{\gamma;\infty}+\bk{(f^{\bar Y},(f^{\bar Y})')}_{X;\alpha,\alpha;m,\infty}+\bk{(Df^{\bar Y},(Df^{\bar Y})')}_{X;\alpha,\alpha;m,\infty}
	\end{equation*}
    is finite.
	In view of \cref{thm.fixpoint}, $\bar Y$ is also a $L_{m,\infty}$-solution to \eqref{eqn.Yeta}. In particular, $(\bar Y,\hat f(\bar Y,\slashed{\bar Y}))$ belongs to the domain of $\Psi$ and one has $(\bar Y,\hat f(\bar Y,\slashed{\bar Y}))=\Psi(\bar Y,\hat f(\bar Y,\slashed{\bar Y}))$. In other words, $(\bar Y,\hat f(\bar Y,\slashed{\bar Y}))$ is another fixed-point of $\Psi$, which is unique by the previous consideration.
\end{proof}

\subsection{Continuous dependence with respect to data} 
\label{sec:stability_proof_of_theorem_thm.mkv.stability}
We now address the proof of \cref{thm.MKV.Stability}.
Since the left-hand side in \eqref{est.mkv.stability} does not depend on the parameter \( \beta' \), we assume without loss of generality that \( \beta'\le(\gamma-2)\beta \) and \(\beta' < \beta\).
We denote by \( \varDelta \) the right-hand side of \eqref{est.mkv.stability}, that is 
\[ \begin{aligned}\varDelta= \|\xi-\bar \xi\|_p+\rho_{\alpha,\beta}(\X,\bar \X)
	+\sup_{t\in[0,T]}\||\sigma_t-\bar \sigma_t|_\infty\|_m
	+\sup_{t\in[0,T]}\||b_t-\bar b_t|_\infty\|_m
	\\
	+\|(\hat f-\hat {\bar f},\hat f'-\hat{\bar f}')\|_{\gamma-1;m}+\bk{\hat f,\hat f';\hat{\bar f},\hat{\bar f}'}_{X,\bar X;\beta,\beta';m}\,.
\end{aligned}\]

From \cref{def.MVsoln}, we have
\begin{align*}
	\|\|\delta Y_{s,t}|\cff_s\|_m\|_\infty\lesssim(t-s)^\alpha 
	,\quad\|\E_sR^Y_{s,t}\|_\infty\lesssim(t-s)^{2 \alpha}.
\end{align*}
Using the fact that $(\hat f,\hat f')\in \mathbf{D}^{\beta,\beta}_XL_{m,\infty}\C^\gamma_{b,p,q}$, we obtain that
\begin{align*}
	\|\|\delta f(Y,\slashed Y) _{s,t}|\cff_s\|_m\|_\infty\lesssim(t-s)^{\alpha\wedge \beta}.
\end{align*}
These estimates show that $(Y,\hat{f}(Y,\slashed Y))$ belongs to $\mathbf{D}^{\beta,\beta}_XL_{m,\infty}$. Similarly, one sees that $(\bar Y,\hat{\bar f}(\bar Y,\slashed {\bar Y}))$ belongs to $\mathbf{D}^{\beta,\beta}_{\bar X}L_{m,\infty}$.
We apply \cref{cor.expo} (with \((\kappa,\kappa')=(\beta,\beta')\) therein)  to get that 
\begin{equation}
	\label{fourth_bound}
	\|(f^Y-\bar f^{\bar Y},(f^Y)'-(\bar f^{\bar Y})')\|_{\gamma-1;m;[u,v]}
	\les \varDelta + \sup_{t\in[u,v]}\|Y_t-\bar Y_t\|_{p;[u,v]} 
\end{equation}
and		
\begin{equation}
	\label{fifth_bound}
	\bk{f^Y,(f^Y)';\bar f^{\bar Y},(\bar f^{\bar Y})'}_{X,\bar X;\beta,\beta';m;[u,v]}
	\les \varDelta + \|Y,Y';\bar Y,\bar Y'\|_{X,\bar X;\beta,\beta';p;[u,v]}\,.
\end{equation}

Furthermore, \cref{prop.newcvec} ensures that
\begin{itemize}
	\item $(f^Y,(f^Y)')$ belongs to $\mathbf{D}^{\beta,\beta}_XL_{m,\infty}\C^\gamma_b$, 
	\item $(Df^Y,D(f^Y)')$ belongs to $\mathbf{D}^{\beta,\beta'}_XL_{m,\infty}\C^{\gamma-1}_b$,
	\item $(\bar f^{\bar Y},(\bar f^{\bar Y})')$ belongs to $\mathbf{D}^{\beta,\beta'}_{\bar X}L_{m,\infty}\C^{\gamma-1}_b$.
\end{itemize}
We may apply \cref{thm.stability_precise} and find that for every $(u,v)\in \Delta(0,T)$,
\begin{align}
	&\|\sup_{t\in[u,v]}|\delta Y_{u,t}-\delta\bar Y_{u,t}|\|_m+ \|\delta Y- \delta\bar Y\|_{\alpha;m;[u,v]}+\|\delta f^{Y}(Y)-\delta\bar f^{\bar Y}(\bar Y)\|_{\beta;m;[u,v]} 
	\nonumber\\&+ \|\E_{\bigcdot} R^Y-\E_{\bigcdot}\bar R^{\bar Y}\|_{\alpha+ \beta;m;[u,v]}
	\lesssim \varDelta 
	+\sup_{t\in[u,v]}\||\sigma^Y_t-\bar \sigma^{\bar Y}_t|_{\infty}\|_m+\sup_{t\in[u,v]}\||b^Y_t-\bar b^{\bar Y}_t|_{\infty}\|_m
	\nonumber\\&\quad
	+\|(f^Y-\bar f^{\bar Y},(f^Y)'-(\bar f^{\bar Y})')\|_{\gamma-1;m;[u,v]}+\bk{f^Y,(f^Y)';\bar f^{\bar Y},(\bar f^{\bar Y})'}_{X,\bar X;\beta,\beta';m;[u,v]}\,.
	\label{est.YbarY}
\end{align}
Using regularity assumptions for \(b,\sigma\), it is easy to see that
\begin{align}\label{tmp.driftandsig}
	\sup_{t\in[u,v]}\||\sigma^Y_t-\bar \sigma^{\bar Y}_t|_{\infty}\|_m+\sup_{t\in[u,v]}\||b^Y_t-\bar b^{\bar Y}_t|_{\infty}\|_m\lesssim \varDelta+\sup_{t\in[u,v]}\|Y_t-\bar Y_t\|_p
\end{align}
while the last two terms in \eqref{est.YbarY} are treated in \eqref{fourth_bound} and \eqref{fifth_bound} respectively.
\begin{proof}[Conclusion]
	Combining  \eqref{fourth_bound}-\eqref{tmp.driftandsig}, we obtain the relation
	\begin{equation}
		\label{pre_gronwall}
		\|\sup_{t\in[u,v]}|\delta Y_{u,t}-\delta\bar Y_{u,t}|\|_m+\|Y,Y';\bar Y,\bar Y'\|_{X,\bar X;\alpha,\beta;m;[u,v]}
		\les \varDelta + \|Y,Y';\bar Y,\bar Y'\|_{X,\bar X;\beta,\beta';p;[u,v]}.
	\end{equation}
	The conclusion now follows from a standard argument: 
	since the above estimate holds for every finite time intervals $[u,v]$ and that $\beta'< \beta<\alpha$ and \(m\ge p\), it is standard to obtain that \[
	\|Y,Y';\bar Y,\bar Y'\|_{X,\bar X;\beta,\beta';p;[0,T]} \les  \varDelta.
	\]
	Plugging this bound in \eqref{pre_gronwall} shows the desired estimate.
	This finishes the proof of \cref{thm.MKV.Stability}.
\end{proof}


%
\appendix

\section{}
In what follows,  \( (\Omega,\mathcal G, \mathbb P) \) and \( (\Omega',\mathcal G',\mathbb P') \) are Polish probability spaces,  \( \Omega' \) is atomless.

\subsection{Measure-theoretic facts}
\label{sec:measure_theoretic}

If \( \lambda \) is the Lebesgue measure on \( [0,1 ]\), observe
that the product probability space
\[
\Omega\otimes[0,1]:=(\Omega\times[0,1],\mathcal G\otimes\Bor([0,1]),\P\otimes \lambda)
\]
is atomless.
Being Polish and atomless, the latter is, in fact, measure-theoretically isomorphic to \( \Omega' \). This is indeed a consequence of the Measurable Isomorphism Theorem (see e.g.\ \cite[Chap.~2]{bogachev2007measure} and the references therein).
Define \( \pi=\pi_1\circ T^{-1} \) where \( T\colon \Omega\otimes[0,1]\to \Omega' \) is such an isomorphism and \(\pi_1\colon \Omega\otimes [0,1]\to\Omega\), \( \pi_1(\omega,r)= \omega\). The map 
\[
\begin{aligned}
	\pi\colon (\Omega',\mathcal G',\P')
	\longrightarrow 
	(\Omega,\mathcal G,\P)
\end{aligned}
\]
is clearly measurable, moreover it is measure-preserving since \( \P'(\pi^{-1}(B))= \P'(T\circ \pi_1^{-1}(B))= \P\otimes\lambda(\pi_1^{-1}(B)) =\P(B)\) for each \( B\in \mathcal G \).
Consequently, if \( Z \) is a random variable on \( \Omega \), we can define
\begin{equation}
	\label{slashed_Z}
	\slashed Z(\omega'):= Z(\pi(\omega')),\quad \quad \omega'\in\Omega'
\end{equation}
and we see that the following properties hold (the proof is omitted).
\begin{lemma}
	\label{lem:measurability}
	Fix a sigma-algebra \( \mathcal F\subset \mathcal G\) and let 
	\[
	\slashed {\mathcal{F}}:=\pi^{-1}(\mathcal F) \,.
	\]
	For every \( \mathcal F \)-measurable random variable \( Z\) on \( \Omega \) with values in a separable Banach space \( \mathcal Y\), the relation \eqref{slashed_Z}
	defines an \( \slashed{\mathcal {F}} \)-measurable random variable \( \slashed{Z} \) on \(\Omega'\) whose law is identical to $Z$'s. The resulting map
	\begin{equation}
		\label{slashing}
		\slash\colon L_q(\mathcal F,\mathcal Y)\to L_q(\slashed{\mathcal F},\mathcal Y),\quad Z\mapsto \slashed Z
	\end{equation}
	is an isometry.
\end{lemma}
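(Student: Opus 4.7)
The plan is to verify the three claims—measurability, law-identity, isometry—each of which follows almost directly from the definitions, since $\pi$ has been engineered to be measurable and measure-preserving.

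First, I would show the measurability of $\slashed{Z}$. For any Borel set $B\subset \mathcal Y$, the composition identity gives
\[
\slashed{Z}^{-1}(B) = (Z\circ\pi)^{-1}(B) = \pi^{-1}(Z^{-1}(B)),
\]
and $\mathcal F$-measurability of $Z$ yields $Z^{-1}(B) \in \mathcal F$, whence $\slashed Z^{-1}(B)\in \pi^{-1}(\mathcal F)=\slashed{\mathcal F}$. Since the range of $\slashed Z$ is contained in that of $Z$, which lies in a separable subspace of $\mathcal Y$ (because $Z$ is strongly measurable), the resulting $\slashed Z$ is itself strongly $\slashed{\mathcal F}$-measurable.

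Next, for the law-identity, I would use the measure-preserving property of $\pi$ directly: for every Borel $B\subset \mathcal Y$,
\[
\mathbb P'(\slashed Z \in B) = \mathbb P'\bigl(\pi^{-1}(Z^{-1}(B))\bigr) = \mathbb P(Z^{-1}(B)) = \mathbb P(Z\in B),
\]
so $\mathrm{Law}_{\Omega'}(\slashed Z) = \mathrm{Law}_{\Omega}(Z)$. The isometry property then follows from the abstract change-of-variables formula: for $q>0$,
\[
\mathbb E'\bigl[\,|\slashed Z|_{\mathcal Y}^q\,\bigr] = \int_{\Omega'}|Z(\pi(\omega'))|_{\mathcal Y}^q\,\mathbb P'(d\omega') = \int_\Omega |Z(\omega)|_{\mathcal Y}^q\,\mathbb P(d\omega) = \mathbb E[|Z|_{\mathcal Y}^q],
\]
and likewise $\mathbb E'[|\slashed Z|_{\mathcal Y}\wedge 1] = \mathbb E[|Z|_{\mathcal Y}\wedge 1]$ for $q=0$. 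Consulting the definition \eqref{lebesgue_q} of $\|\cdot\|_q$, these identities translate into $\|\slashed Z\|_q = \|Z\|_q$, which is the desired isometry (the bijection onto its image is immediate).

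There is no genuine obstacle here: once the existence of the measurable, measure-preserving map $\pi\colon\Omega'\to\Omega$ has been established (via the Measurable Isomorphism Theorem applied to $\Omega\otimes [0,1]$ and $\Omega'$, as indicated just above the lemma), every claim reduces to a one-line computation with preimages or change-of-variables. The role of the lemma is therefore primarily bookkeeping, making precise the fact that the slashing operation intertwines sigma-algebras as $\slashed{\mathcal F} = \pi^{-1}(\mathcal F)$ and preserves all distributional quantities.
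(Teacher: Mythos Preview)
Your proof is correct: measurability, law-identity, and the isometry each follow from the one-line preimage and change-of-variables computations you give, and you correctly note strong measurability via the separable range of $Z$. The paper itself omits the proof of this lemma, so there is no approach to compare against.
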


\subsection{Banach space-valued random variables}
\label{app:Lq}
Recall that \( (\Omega,\mathcal F,\mathbb P) \) is a Polish probability space.
Let \( \aby\) be a Banach space and take \( q\in [0,\infty] \). We denote by \( L_q=(L_q(\mathcal Y;\mathcal G),d_q)\) the vector space of random variables of order \( q \) equipped with the metric
\[
d_q(Z,\bar Z)= \|Z-\bar Z\|_{q} 
\]
(for \( q\ge1 \) this is clearly a distance; when \( q<1 \) use \( (x+y)^q\le x^q +y^q\), \( x,y\ge0 \) to prove triangle inequality).
We now collect a few other convenient properties for these spaces. 
\begin{lemma}
	\label{lem:embedding}
	Each \( L_q \), \( q\in [0,\infty] \) is a complete metric space, moreover
	\( d_0 \) topologizes convergence in probability, namely \( \lim Z_n\to Z\) in \( L_0 \) if and only if for any \( \epsilon>0 \), \( \lim\P(|Z_n-Z|\ge\epsilon)=0\).
	When \( q<\infty \), these are separable (hence Polish) vector spaces provided that \( \mathcal Y \) is separable.
	Finally,
	\begin{equation}
		\label{Lp_subset_Lq}
		L_p\hookrightarrow L_q,\quad \quad \forall 0\le q\le p\le\infty\,,
	\end{equation}
	and the corresponding embedding is dense if \( \mathcal Y \) is separable.
\end{lemma}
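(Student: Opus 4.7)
The plan is to reduce each assertion to a classical real-valued fact applied to the scalar process $|Z|_{\mathcal Y}$, combined with the Bochner/Pettis framework for strongly measurable $\mathcal Y$-valued random variables. I would treat the four regimes $q=\infty$, $q\in[1,\infty)$, $q\in(0,1)$, $q=0$ in parallel, since each reduces to the same underlying ideas.

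For \textbf{completeness}, the cases $q\in[1,\infty]$ are classical Bochner space results. For $q\in(0,1)$ and $q=0$ I would use the standard Riesz--Fischer scheme: the triangle inequality follows from $(a+b)^q \le a^q + b^q$, respectively from subadditivity of $x\mapsto x\wedge 1$; given a $d_q$-Cauchy sequence, I extract a subsequence satisfying $\|Z_{n_{k+1}} - Z_{n_k}\|_q \le 2^{-k}$, apply Borel--Cantelli together with Markov's inequality at threshold $2^{-k/(2q)}$ (resp.\ $2^{-k/2}$ for $q=0$) to produce an almost sure limit $Z$, and conclude via Fatou's lemma. The $d_0$-characterization of convergence in probability follows from the two pointwise inequalities
\begin{align*}
(\epsilon\wedge 1)\mathds 1_{\{|Z-\bar Z|_{\mathcal Y}\ge\epsilon\}} \;\le\; |Z-\bar Z|_{\mathcal Y}\wedge 1 \;\le\; \epsilon + \mathds 1_{\{|Z-\bar Z|_{\mathcal Y}\ge\epsilon\}}\,,
\end{align*}
after integrating against $\P$ and letting $n\to\infty$ then $\epsilon\to 0$.

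For the \textbf{embedding} $L_p \hookrightarrow L_q$, the regime $1\le q\le p\le\infty$ is Jensen's inequality applied to $x\mapsto x^{p/q}$. In the remaining mixed regimes I would invoke the elementary pointwise estimates $|z|_{\mathcal Y}^q \le 1 + |z|_{\mathcal Y}^p$ (for $p\ge q > 0$) and $|z|_{\mathcal Y}\wedge 1 \le 1\wedge |z|_{\mathcal Y}^{p\wedge 1}$ (for the case $q=0$), which upon taking expectations yield continuity of the identity map in every case. For \textbf{separability}, assuming $\mathcal Y$ separable and $q<\infty$, I use that $(\Omega,\mathcal G,\P)$ is Polish to select a countable field $\mathcal A$ generating $\mathcal G$ modulo null sets, and combine it with a countable dense subset of $\mathcal Y$ to produce a countable family of simple functions; by Pettis-type approximation of strongly measurable maps (together with truncation when $q\in[0,1)$) this family is $L_q$-dense. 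Since such simple functions lie in $L_r$ for every $r\in[0,\infty]$, the \textbf{density} of the inclusion $L_p \hookrightarrow L_q$ for $0\le q\le p<\infty$ is immediate.

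The only genuine obstacle is organizational: one must handle the four regimes of $q$ uniformly, with the correct definition \eqref{lebesgue_q} of $\|\cdot\|_q$ in each (in particular the exponent $1/q\wedge 1$ that is $1$, not $1/q$, when $q\in(0,1)$). Every substantive ingredient---Borel--Cantelli, Fatou, Jensen, Pettis approximation---is classical, so no serious technical difficulty arises.
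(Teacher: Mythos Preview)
Your proposal is essentially correct and considerably more detailed than the paper's own proof, which simply declares the properties standard, hints at Markov's inequality for the $d_0$-characterization, cites a reference for separability, invokes H\"older for the embeddings, and for density reduces to the scalar case via the truncations $\xi_n := (-n)\vee(\xi\wedge n)\in L_\infty(\R)$ combined with a diagonal extraction. Your route through simple functions for separability and density is equivalent in content and arguably cleaner, since those simple functions already lie in $L_\infty$ (so your density argument extends to $p=\infty$ as well, even though you wrote $p<\infty$).

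One small point worth tightening: the pointwise bound $|z|_{\mathcal Y}^q \le 1 + |z|_{\mathcal Y}^p$ establishes the \emph{set inclusion} $L_p\subset L_q$ but not continuity of the embedding at zero. In fact Jensen (equivalently Lyapunov) applied to the convex map $x\mapsto x^{p/q}$ gives $(\E|Z|^q)^{1/q}\le(\E|Z|^p)^{1/p}$ for all $0<q\le p$, not only for $q\ge 1$; this immediately yields continuity in every positive-exponent regime once you account for the exponent convention $\tfrac1q\wedge1$. Alternatively, a splitting at level $\epsilon$ gives $\E|W|^q\le \epsilon^q + \epsilon^{q-p}\E|W|^p$ and hence continuity. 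Either fix is elementary, so this does not affect the overall validity of your plan.
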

\begin{proof}
	These are standard properties.
	For instance, the fact that \( d_0 \) topologizes convergence in probability easily follows by Markov inequality.
	Separability is a consequence of the separability of \( \Omega \) (as it is Polish) and \cite[p.~111]{vakhania1987probability}. 
	The continuous embeddings in \eqref{Lp_subset_Lq} are easily obtained from H\"older inequality. As for density, it follows by the case \( \mathcal Y=\R\) and a diagonal extraction procedure, observing that \( \|\xi_n-\xi\|_q\to 0 \) where \( \xi_n:=(-n)\vee (\xi\wedge n) \in L_\infty(\R)\).
\end{proof}

\subsection{Conditional expectation}

Fix a Banach space \( \aby \), a random variable \( Z\colon \Omega\to \mathcal Y \) and let \( \mathcal F\subset \mathcal G \) be a \( \sigma \)-algebra. Classically, the conditional expectation \( \E_{\mathcal F} Z =\E(Z|\mathcal F) \) is defined through the property
\begin{equation}
	\label{conditional_banach}
	\int _A \E(Z|\mathcal F)(\omega)\P(d\omega) = \int _A Z(\omega)\P(d\omega),
	\quad \quad \text{for all }A\in \mathcal F\,.
\end{equation}
Recall that when \( Z \in L_1(\mathcal G;\aby)\), the conditional expectation indeed exists and \eqref{conditional_banach} determines \( \E(Z|\mathcal F)\in L_1(\mathcal F;\aby) \) uniquely \cite[Chap.~V.1]{diestel1977vector}. Moreover, the resulting map \( \E(\cdot |\mathcal F)\colon L_1(\mathcal G;\aby)\to L_1(\mathcal F,\aby) \) is linear
and its operator norm does not exceed \( 1 \) (contraction property).

We now record the following fact about the possibility of interchanging linear operators and conditional expectations in some cases.
The result should be well-known in principle, but it is not much effort to provide a proof, which we do. For related -- though slightly different -- statements, we refer the reader to the classical monograph \cite{MR1102015} (specifically: Chapter 2 and the references therein).
\begin{lemma}
	\label{lem:operator}
	Let \( \mathcal X,\mathcal Y \) be real Banach spaces and fix a \( \sigma \)-algebra \( \mathcal F\subset \mathcal G \).
	Let \( \{T_\omega\} \) be a random family of continuous linear operators from \( \mathcal X\to \mathcal Y \) such that
	 \[
	 \begin{aligned}
	T_{\bigcdot}\colon L_1(\mathcal G;\abx)
	\rightarrow L_1(\mathcal G;\aby),\quad \quad 
	 \eta 
	 \mapsto 
	 \Big(\omega\mapsto T_\omega[\eta(\omega)] \Big)
	 \end{aligned}
	 \]
	 is well-defined, bounded and such that
	 \begin{equation}
	 	\label{condition_T}
	 	 T_{\bigcdot}(L_1(\mathcal F;\abx))\subset L_1(\mathcal F;\aby) .
	 \end{equation}
	Then, conditional expectation and \( T \) may be swapped as follows:
	for any \( \eta\in L_1(\abx;\mathcal G) \),
	\[
	\E(T[\eta]|\mathcal F) = T[\E(\eta|\mathcal F)],\quad \P\otimes\P'\text{-a.s.} 
	\]
	
	Consequently, if \( Z\in L_\infty(\mathcal F;\mathcal C_b^{\gamma}(\abx;\aby)) \) for some \( \gamma>1 \) and
	 \( T_\omega= DZ(\omega) \), we find that for any \( \eta\in L_1(\abx;\mathcal G) \)
	 \[
	 \E(DZ[\eta]|\mathcal F) = DZ[\E(\eta|\mathcal F)],\quad \P\otimes\P'\text{-a.s.} 
	 \]
\end{lemma}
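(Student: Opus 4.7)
My plan is to verify the identity $\mathbb{E}(T[\eta]|\mathcal{F}) = T[\mathbb{E}(\eta|\mathcal{F})]$ first on simple functions and then extend by density in $L_1(\mathcal{G};\mathcal{X})$. The preliminary observation that unlocks everything is that applying the hypothesis \eqref{condition_T} to the constant (hence $\mathcal{F}$-measurable) random variable $\eta(\omega)\equiv x$ yields, for every fixed $x\in\mathcal{X}$, that $\omega\mapsto T_\omega[x]$ is an element of $L_1(\mathcal{F};\mathcal{Y})$. In particular, for each $x$ this map is $\mathcal{F}$-measurable in $\omega$, which is the only consequence of \eqref{condition_T} that is actually needed.

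Given this, I would first treat a simple function $\eta = x\mathbf{1}_B$ with $x\in\mathcal{X}$ and $B\in\mathcal{G}$. Then $T[\eta](\omega)=T_\omega[x]\mathbf{1}_B(\omega)$, and since $T_{\bigcdot}[x]$ is $\mathcal{F}$-measurable, it can be pulled out of the conditional expectation of $\mathbf{1}_B$:
\[
\mathbb{E}(T[\eta]|\mathcal{F})(\omega) = T_\omega[x]\,\mathbb{E}(\mathbf{1}_B|\mathcal{F})(\omega) = T_\omega\bigl[x\,\mathbb{E}(\mathbf{1}_B|\mathcal{F})(\omega)\bigr] = T_\omega[\mathbb{E}(\eta|\mathcal{F})(\omega)].
\]
By linearity of $T_\omega$ and of conditional expectation, the identity extends to arbitrary finite combinations $\eta=\sum_i x_i\mathbf{1}_{B_i}$. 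For a general $\eta\in L_1(\mathcal{G};\mathcal{X})$ (which is strongly measurable, hence separably-valued), Bochner's approximation theorem supplies such simple functions $\eta_n\to\eta$ in $L_1$. The assumed boundedness of $T_{\bigcdot}\colon L_1(\mathcal{G};\mathcal{X})\to L_1(\mathcal{G};\mathcal{Y})$ gives $T[\eta_n]\to T[\eta]$ in $L_1(\mathcal{G};\mathcal{Y})$, and the $L_1$-contractivity of conditional expectation transfers this convergence through both sides, yielding the identity in the limit. Note that the right-hand side $T[\mathbb{E}(\eta|\mathcal{F})]$ is automatically $\mathcal{F}$-measurable by \eqref{condition_T} applied to $\mathbb{E}(\eta|\mathcal{F})\in L_1(\mathcal{F};\mathcal{X})$.

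For the concluding assertion, I would observe that when $Z\in L_\infty(\mathcal{F};\mathcal{C}_b^\gamma(\mathcal{X},\mathcal{Y}))$ with $\gamma>1$, the map $T_\omega:=DZ(\omega)$ is a bounded linear operator with $\|T_\omega\|_{\mathcal{L}(\mathcal{X},\mathcal{Y})}\le\|Z\|_{L_\infty;\mathcal{C}_b^\gamma}$, which delivers the boundedness of $T_{\bigcdot}$. Since $DZ$ inherits $\mathcal{F}$-measurability from $Z$ (the Fréchet derivative being a continuous operation on $\mathcal{C}_b^\gamma$), any $\mathcal{F}$-measurable $\eta$ gives $T_{\bigcdot}[\eta]$ that is $\mathcal{F}$-measurable, verifying \eqref{condition_T}, and the previous part applies. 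The only subtle step in the whole plan is the measurability consequence of \eqref{condition_T} extracted in the first paragraph; once that is in hand the remainder is a routine density/continuity argument.
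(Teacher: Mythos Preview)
Your proof is correct and takes a genuinely different route from the paper's. The paper reduces to $\aby=\R$ via Hahn--Banach, then approximates the \emph{operator} $T$: first the deterministic case $T_\omega\equiv\tau$, then simple $T_\omega=\sum_i\mathbf 1_{A_i}\tau_i$ with $A_i\in\cff$, then general $T$ by a.s.\ approximation in $\abx^*$. You instead fix $T$ and approximate the \emph{argument} $\eta$ by simple functions $\sum_i x_i\mathbf 1_{B_i}$. Both routes hinge on the same fact---that \eqref{condition_T} applied to constants forces $\omega\mapsto T_\omega[x]$ to be $\cff$-measurable---but you isolate this observation explicitly at the outset, whereas the paper uses it only implicitly (it is what justifies that the approximating simple $T^n$ can be taken $\cff$-measurable). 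Your approach is slightly more elementary in that it avoids the Hahn--Banach reduction and requires only the standard fact that an $\cff$-measurable $\aby$-valued factor can be pulled out of a conditional expectation when the remaining factor is a bounded scalar; the paper's approach has the mild advantage of isolating the purely deterministic identity $\E_\cff\langle\tau,\eta\rangle=\langle\tau,\E_\cff\eta\rangle$ as a clean lemma.
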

\begin{proof}
We suppose that \( \mathcal Y=\R \)
(the general case follows by Hahn--Banach Theorem). 
Let us assume first that \( T \) is deterministic, namely there exists \( \tau\in \abx^*=\mathcal L(\abx,\R) \) such that  \( T_\omega=\tau \), \( \P \)-a.s.
	Now, linear functionals and integrals commute (by definition of vector-valued integration): we thus infer that
	for every \( A\in \mathcal F \):
	\[
	\begin{aligned}
	\int_{A}\langle\tau,\E_{\mathcal F}\eta\rangle(\omega)\P(d\omega)
	&=
	\langle \tau,\int _A \E_{\mathcal F}\eta (\omega)\P(d\omega)\rangle
	\\&= \langle\tau,\int _A \eta(\omega)\P(d\omega)\rangle
	\\&= \int _A \langle\tau,\eta(\omega)\rangle\P(d\omega),
	\end{aligned}
	\]
	where we used \eqref{conditional_banach} at the second line.
We find indeed that
\[
\E_{\mathcal F}\langle\tau,\eta\rangle= \langle\tau,\E_{\mathcal F}\eta\rangle,\quad 
\P\text{-a.s.},
\]
which proves our claim in the deterministic case. 

Next, if we suppose that \( T_\omega=\sum_{i=1}^n\mathbf 1_{A_i}(\omega)\tau_i \) is simple, we find indeed that
\[
\E_{\mathcal F}T[\eta]=\sum_{i=1}^n\mathbf 1_{A_i}\E_{\mathcal F}\langle \tau_i,\eta\rangle
=\sum_{i=1}^n\mathbf 1_{A_i}\langle \tau_i,\E_{\mathcal F}\eta\rangle
=T[\E_{\mathcal F}\eta]
\]
by the deterministic case. Finally, if \( T^n,n\ge1 \) is a sequence of such simple elements converging \( \P \)-a.s.\ to \( T \) in \( \mathcal X^* \), we obtain
\[
\E_{\mathcal F}T[\eta]=\lim_{n\to\infty}\E_{\mathcal F}T^n[\eta]=\lim_{n\to\infty} T^n[\E_{\mathcal F}\eta]=T[\E_{\mathcal F}\eta]
\]
 thanks to the previous case and contraction property.
\end{proof}

Next, we record an observation on conditional expectation in product space. Again, while the property is fundamentally classical, finding a reference turns out to be challenging, so we do provide a proof.
\begin{lemma}
	\label{lem:prod}
	Let $\eta$ be a \( \aby \)-valued integrable random variable on $\Omega\otimes\Omega'$ and fix two sigma-algebras \( \mathcal F\subset \mathcal G\), \( \mathcal F'\subset \mathcal G' \).
	For \( \mathbb P \)-a.e.\ $\omega$ (resp.\ for \( \mathbb P '\)-a.e.\ $\omega'$), the random variable 
	\[
	\Omega'\to \aby,\quad \omega'\mapsto\E\otimes\E'(\eta|\cff\otimes\cff')(\omega,\omega') 
	\]
	(resp.\ \( \Omega'\to \aby, \) \(\omega\mapsto\E\otimes\E'(\eta|\mathcal F\otimes\cff')(\omega,\omega')\))
	is $\cff'$-measurable (resp.\ \( \mathcal F \)-measurable) and it holds
	\[	
	(\E\otimes\E')(\eta|\cff\otimes\cff')=
	\E(\E'( \eta|\mathcal F')|\mathcal F)=
	\E'(\E( \eta|\mathcal F)|\mathcal F')
	\,,
	\]
	\( \P\otimes\P' \)-almost-surely.
\end{lemma}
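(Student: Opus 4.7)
The plan is a Hahn--Banach reduction to the scalar case, followed by verification on a dense family of simple tensors and an $L_1$-continuity argument.

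First, since $\eta$ is strongly measurable it is concentrated on a separable closed subspace $\mathcal Y_0\subset\mathcal Y$, and testing the two identities against arbitrary $\tau\in\mathcal Y^*$ -- which commutes with all conditional expectations via the defining property \eqref{conditional_banach}, exactly as in the proof of \cref{lem:operator} -- reduces everything to the case $\mathcal Y=\mathbb R$.

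Second, for simple tensors $\eta(\omega,\omega')=\xi(\omega)\xi'(\omega')$ with $\xi\in L_\infty(\Omega,\mathcal G,\P)$ and $\xi'\in L_\infty(\Omega',\mathcal G',\P')$, I would take as candidate
\[
\alpha(\omega,\omega'):=\E(\xi|\mathcal F)(\omega)\cdot\E'(\xi'|\mathcal F')(\omega'),
\]
which is manifestly $\mathcal F\otimes\mathcal F'$-measurable and integrable, and verify the three equalities by testing against indicators of measurable rectangles $A\times B$ with $A\in\mathcal F$, $B\in\mathcal F'$ (a $\pi$-system generating $\mathcal F\otimes\mathcal F'$): Fubini together with the defining property of $\E(\cdot|\mathcal F)$ and $\E'(\cdot|\mathcal F')$ gives
\[
\int_{A\times B}\alpha\,d(\P\otimes\P')=\int_A\xi\,d\P\cdot\int_B\xi'\,d\P'=\int_{A\times B}\eta\,d(\P\otimes\P'),
\]
whence $(\E\otimes\E')(\eta|\mathcal F\otimes\mathcal F')=\alpha$. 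For the iterated conditioning the same idea, applied one factor at a time, yields $\E'(\eta|\mathcal F')=\xi\cdot\E'(\xi'|\mathcal F')$ (a $\mathcal G\otimes\mathcal F'$-measurable version, obtained by checking against rectangles $A\times B$ with $A\in\mathcal G$, $B\in\mathcal F'$), and then $\E(\cdot|\mathcal F)$ pulls out the $\mathcal F'$-measurable factor to return $\alpha$; the symmetric identity is obtained identically.

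Third, the linear span of such simple tensors is dense in $L_1(\Omega\times\Omega',\mathcal G\otimes\mathcal G',\P\otimes\P')$ by the usual monotone class argument, since $\mathbf 1_{A\times B}=\mathbf 1_A\otimes\mathbf 1_B$ already belongs to the span and the rectangles generate $\mathcal G\otimes\mathcal G'$. Each of the three linear maps
\[
\eta\longmapsto(\E\otimes\E')(\eta|\mathcal F\otimes\mathcal F'),\quad \E(\E'(\eta|\mathcal F')|\mathcal F),\quad \E'(\E(\eta|\mathcal F)|\mathcal F')
\]
is a contraction from $L_1$ to $L_1$ (iteration of the contraction property of conditional expectation), so passage to the $L_1$-limit along an approximating sequence transfers the three identities from simple tensors to arbitrary integrable $\eta$. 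The slice measurability is then automatic: the class of sets in $\mathcal F\otimes\mathcal F'$ whose $\omega$-sections lie in $\mathcal F'$ (resp.\ whose $\omega'$-sections lie in $\mathcal F$) is a $\sigma$-algebra containing the rectangles, hence equals $\mathcal F\otimes\mathcal F'$, so any $\mathcal F\otimes\mathcal F'$-measurable version of $(\E\otimes\E')(\eta|\mathcal F\otimes\mathcal F')$ has the claimed sectional measurability.

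The only mildly delicate point in this plan is ensuring that the contraction property and the $L_1$-density transfer correctly in the vector-valued setting; both are standard facts for Bochner integration and are in any case bypassed by the scalarization step.
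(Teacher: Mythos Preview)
Your proof is correct, but the route differs from the paper's. The paper avoids both the Hahn--Banach reduction and the density-of-simple-tensors argument: it first shows directly that the \emph{intermediate} conditional expectation $\E\otimes\E'(\eta\mid\cgg\otimes\cff')$ coincides with the slicewise $\E'(\eta\mid\cff')$ (by checking on rectangles $A\times A'$ with $A\in\cgg$, $A'\in\cff'$ and applying Fubini), does the symmetric identification for $\cff\otimes\cgg'$, and then concludes the full identity by the tower property. Your approach instead decomposes $\eta$ itself into simple tensors, verifies the three identities on those, and passes to the limit using $L_1$-contractivity of conditional expectations. The paper's argument is shorter and handles the vector-valued case without scalarization; your argument is more hands-on and makes the structure of the iterated conditioning explicit on elementary building blocks, at the cost of the extra density step.
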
 
\begin{proof}
	Let $\zeta=\E\otimes\E'(\eta|\cgg\otimes\cff')$.
	For every $A\in\cgg$ and $A'\in\cff'$, Fubini Theorem yields
	\[
	\E\otimes\E'(\zeta\mathds{1}_{A}\mathds{1}{A'})=\E\E' [\E'( \eta|\cff')\mathds{1}_{A}\mathds{1}_{A'}]\,.
	\]
	It follows that $\zeta=\E'( \eta|\cff')$ $\P$-a.s.,  $\P'$-a.s and hence also $\P\otimes\P'$-a.s. Analogously, we obtain that
	$\E\otimes\E'(\eta|\cff\otimes\cgg')=\E( \eta|\cff)$ $\P\otimes\P'$-a.s. 
	Using tower property of conditioning and that $\cff\otimes\cff'=(\cgg\otimes\cff')\wedge(\cff\otimes\cgg')$,  we have
	\begin{align*}
		(\E\otimes\E')(\eta|\cff\otimes\cff')=\E\otimes\E'_{\cgg\otimes\cff'}[\E\otimes\E'_{\cff\otimes\cgg'} \eta]=\E_{\cff}\E'_{\cff'} \eta \quad\P\otimes\P'\text{-a.s.}
	\end{align*}
	This proves our claim.		
\end{proof}

\bibliographystyle{alpha}
\bibliography{processes_martingale}
\end{document}